\renewcommand{\eprint}[1]{#1}
\mathchardef\mhyph="2D
\DeclareSymbolFont{stmry}{U}{stmry}{m}{n}
\DeclareMathSymbol\stmryolt\mathbin{stmry}{"3C}
\DeclareMathSymbol\stmryogt\mathbin{stmry}{"3D}
\numberwithin{equation}{section}
\newtheorem{theorem}{Theorem}[section]
\newtheorem{corollary}[theorem]{Corollary}
\newtheorem{lemma}[theorem]{Lemma}
\newtheorem{proposition}[theorem]{Proposition}
\newtheorem{theoremAlph}{Theorem}
\theoremstyle{remark}
\newtheorem{remark}[theorem]{Remark}
\newtheorem{example}[theorem]{Example}
\theoremstyle{definition}
\newtheorem{definition}[theorem]{Definition}
\newcommand{\C}{\mathbb{C}}
\newcommand{\cA}{\mathcal{A}}
\newcommand{\cB}{\mathcal{B}}
\newcommand{\cC}{\mathcal{C}}
\newcommand{\cD}{\mathcal{D}}
\newcommand{\cK}{\mathcal{K}}
\newcommand{\cI}{\mathcal{I}}
\newcommand{\cL}{\mathcal{L}}
\newcommand{\cM}{\mathcal{M}}
\newcommand{\cN}{\mathcal{N}}
\newcommand{\cO}{\mathcal{O}}
\newcommand{\cR}{\mathcal{R}}
\DeclareMathOperator{\Ad}{Ad}
\DeclareMathOperator{\Alg}{Alg}
\DeclareMathOperator{\End}{End}
\DeclareMathOperator{\ind}{ind}
\DeclareMathOperator{\Irr}{Irr}
\DeclareMathOperator{\cMod}{Mod^c}
\DeclareMathOperator{\Nat}{Nat}
\DeclareMathOperator{\Rep}{Rep}
\DeclareMathOperator{\Vect}{Vec}
\DeclareMathOperator{\tr}{tr}
\DeclareMathOperator{\Id}{Id}
\DeclareMathOperator{\id}{id}
\DeclareMathOperator{\Img}{Im}
\newcommand{\norm}[1]{\left \| #1 \right \|}
\newcommand{\linnprod}[3]{\tensor[_{#1}]{\langle #2, #3 \rangle}{}}
\newcommand{\vnotimes}{\mathbin{\bar{\otimes}}}
\newcommand\op{\mathrm{op}}
\newcommand{\Hilb}{\mathrm{Hilb}}
\newcommand{\FH}{\mathrm{FH}}
\newcommand{\CB}{\mathcal{CB}}
\newcommand{\cCB}{\mathcal{CB}^c}
\newcommand{\CBOS}{\mathcal{CBOS}}
\title[Injectivity for module categories]{Injectivity for algebras and categories with quantum symmetry}
\date{April 2025, restructuring; July 21, 2023, minor revision; May 13, 2022}
\author{Lucas Hataishi}
\address{Universitetet i Oslo}
\curraddr{University of Oxford}
\email{lucas.yudihataishi@maths.ox.ac.uk}
\author{Makoto Yamashita}
\address{Universitetet i Oslo}
\email{makotoy@math.uio.no}
\thanks{Supported by the NFR project 300837 ``Quantum Symmetry''.}
\begin{document}

\begin{abstract}
We establish the existence of injective envelopes for unital Yetter--Drinfeld C$^*$-algebras, and a related class of bimodule categories over rigid C$^*$-tensor categories.
This implies monoidal invariance for boundary actions of Drinfeld doubles of compact quantum groups.
\end{abstract}

\maketitle

\section{Introduction}

In this paper, we study completely positive maps and injectivity for Yetter--Drinfeld algebras over compact quantum groups, and module categories over rigid C$^*$-tensor categories.

The study of injectivity of operator systems and operator spaces led to surprisingly rich applications to the structure theory of operator algebras old and new.
While the foundation of the theory goes back to the 1950s, one early breakthrough in this context is Arveson's result on the injectivity of $\cB(H)$~\cite{MR253059}.
This, together with subsequent works by Choi and Effros~\citelist{\cite{MR0376726}\cite{ce1}} among others, proved the field to be a fruitful framework at the intersection of abstract functional analysis and more ``applied'' fields such as quantum information.

Following this direction, Hamana took on systematic study of injectivity~\citelist{\cite{h4}\cite{h1}\cite{h2}\cite{h3}\cite{MR2985658}}, defining and proving the existence and uniqueness of injective envelopes of operator systems, $C^*$-algebras, and $C^*$-dynamical systems with dynamics given by a discrete group, and more generally by a Hopf--von Neumann algebra.
Roughly speaking, his construction builds on two parts: the first is to show that the given object embeds into a bigger injective object, and the next is to show that a `smallest' injective one containing the original object is obtained as the image of minimal idempotent in a convex semigroup of completely positive maps acting on the injective one.

In another direction, Furstenberg and his school studied the probabilistic notion of boundary actions of semisimple Lie groups~\citelist{\cite{furstenberg1}\cite{moore}}, and more generally of locally compact groups~\cite{MR365537}.
A central object in their theory is the notion of  a universal (initial) boundary action of $G$, now known as the Furstenberg boundary $\partial_F(G)$.
In the case of a semisimple Lie group $G$, an important application of boundary theory is to the symmetric space $G/K$ corresponding to $G$, which appears through measure theoretic considerations on $\partial_F(G)$.
Furstenberg also considered another kind of boundary, the Poisson boundary $B(G)$ of $G$, which shows up through the study of harmonic functions, and studied how $\partial_F(G)$ relates to $B(G)$.

While these two theories look completely separate at the outset, it turns out that the $C^*$-algebra of continuous functions on the Furstenberg boundary of a discrete group $G$ agrees with the $G$-injective envelope of the trivial $G$-$C^*$-algebra $\C$~\citelist{\cite{h4}\cite{kk}}.
In recent years, this connection led to striking implications on the structure of group $C^*$-algebras: this includes the equivalence between amenability of a discrete group and triviality of its Furstenberg--Hamana boundary, results on rigidity and $C^*$-simplicity~\citelist{\cite{MR3735864}\cite{kennedy}}.

Probabilistic boundary theory has been brought to the framework of operator algebraic quantum groups in various flavors in the last 30 years~\citelist{\cite{biane}\cite{izumi1}\cite{izumi2}\cite{int}\cite{neshveyevtuset1}\cite{vaesvergnioux}\cite{NY1}}.
More recently, the analogue of Furstenberg--Hamana boundaries in this framework is also achieved by the work of Kalantar, Kasprzak, Skalski, and Vergnioux~\cite{kksv1}.
They define and prove both the existence and the uniqueness of Furstenberg--Hamana boundaries for discrete quantum groups, and relate them to several $C^*$-algebraic concepts such as simplicity, exactness, and existence of KMS-states.

This was further brought to the case of Drinfeld doubles $D(G)$ of compact quantum groups $G$ in a joint work of the first named author with Habbestad and Neshveyev~\cite{HHN1}.
When the compact quantum group is the $q$-deformation of a compact simple group, this Drinfeld double construction can be seen as a quantization of the corresponding complex simple Lie group, hence coming back to a setting close to Furstenberg's original work.
This also provides a conceptual explanation for the equality between the Furstenberg--Hamana boundary and the Poisson boundary of such quantum groups.

In~\cite{HHN1}, besides a detailed account of boundary theory for Yetter--Drinfeld $G$-C$^*$-algebras, a monoidal invariant approach was also initiated, attempting to derive boundary actions from the representation theory of compact quantum groups.
Due to the lack of a categorical description of general Yetter--Drinfeld C$^*$-algebras, the monoidal invariant approach was left incomplete, encompassing only the \emph{braided-commutative} ones.
Later in this paper, we will adapt our techniques to the setting of~\cite{YDpaper} to complete that picture.
We thus prove that if two compact quantum groups $G$ and $G'$ are monoidally equivalent, i.e., if $\Rep (G) \simeq \Rep(G')$, then the cateogries of boundary actions of the respective Drinfeld doubles $D(G)$ and $D(G')$ are equivalent.

Let us explain the structure of our paper.
In Section~\ref{sec:prelim} we fix our conventions and recall some basic results.
Then in Section~\ref{sec:inj-env-YD-G-algs}, we quickly prove our first main result, as follows.

\begin{theoremAlph}[Theorem~\ref{thm:inj-env-YD-G-alg}]\label{thmA}
For a compact quantum group $G$, every continuous unital Yetter--Drinfeld $G$-C$^*$-algebra $A$ admits an injective envelope.
\end{theoremAlph}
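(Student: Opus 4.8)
The plan is to carry Hamana's two-step construction of injective envelopes into the Yetter--Drinfeld setting. I would work in the category whose objects are unital Yetter--Drinfeld $G$-operator systems and whose morphisms are the unital completely positive maps intertwining both the $G$-coaction and the Yetter--Drinfeld module structure (equivalently, the maps equivariant for the Drinfeld double $D(G)$), with injectivity taken relative to this category. The first task is then to exhibit, for the given $A$, an injective object $I$ in this category together with an equivariant embedding $A \hookrightarrow I$; once this is in place, a minimal-projection argument extracts the envelope from $I$.

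For the embedding step I would realise $A$ faithfully as a Yetter--Drinfeld subsystem of $\cB(H)$ for a Hilbert space $H$ carrying a corepresentation that implements both the coaction and the module structure, passing to a regular induced object of the form $\cB(L^2(G) \otimes H)$ if necessary. The key claim is that such an induced $\cB(H)$ is injective in the equivariant category, an equivariant analogue of Arveson's theorem. I would prove it by first invoking the ordinary Arveson/Hahn--Banach extension with equivariance forgotten, and then restoring equivariance by averaging the resulting completely positive map against the Haar state of $G$; the regular nature of the induced object is what guarantees that the averaged map remains unital and completely positive while now respecting the full Yetter--Drinfeld structure. Compactness of $G$ is essential here, as it is precisely what makes Haar averaging available.

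With an injective $I \supseteq A$ in hand, I would run Hamana's idempotent argument inside the equivariant category. Consider the set $S$ of equivariant unital completely positive maps $\phi \colon I \to I$ that fix $A$ pointwise: it is nonempty (it contains $\id_I$), convex, closed under composition, and compact in the point--weak$^*$ topology. Crucially, the equivariance constraint is preserved under composition, convex combinations, and point--weak$^*$ limits, so every operation in Hamana's scheme stays within the category. Applying Hamana's order together with a Zorn's lemma argument yields a minimal equivariant $A$-projection $\phi \in S$, necessarily idempotent. Its image $\phi(I)$, equipped with the Choi--Effros product $x \cdot y := \phi(xy)$, is then an injective Yetter--Drinfeld operator system; since $\phi$ is equivariant this product is again equivariant, and minimality of $\phi$ forces $A \subseteq \phi(I)$ to be a rigid, hence essential, extension, which is exactly the defining property of the injective envelope. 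That $\phi(I)$ is a genuine C$^*$-algebra when $A$ is one follows as in Hamana's original work.

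The step I expect to be the main obstacle is the equivariant injectivity in the embedding part: constructing the averaging operator that turns an arbitrary extension into a Yetter--Drinfeld equivariant one while preserving complete positivity and unitality. Once this equivariant Arveson theorem is secured, the minimal-projection machinery is a formal adaptation of Hamana's argument, the only extra bookkeeping being to check that every map, limit, and product remains equivariant.
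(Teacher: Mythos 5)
Your overall architecture (embed into an injective object, then run Hamana's minimal-idempotent and Choi--Effros machinery) matches the paper, and your second half is essentially the paper's argument verbatim. But there is a genuine gap at exactly the step you flag as the main obstacle, and your proposed fix for it does not work. Averaging against the Haar state of $G$ can only restore equivariance for the \emph{coaction} of the compact quantum group $G$; it does nothing for the $\cO(G)$-module half of the Yetter--Drinfeld structure, which is an action of the \emph{discrete} dual $\hat G$ (equivalently, the full structure is an action of the non-compact Drinfeld double $D(G)$). There is no Haar-type averaging available for $\hat G$: for general compact quantum groups (e.g.\ the free quantum groups) $\hat G$ is not even amenable, so no invariant mean exists. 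Consequently your ``equivariant Arveson theorem'' for $\cB(L^2(G)\otimes H)$, proved by forgetting equivariance and then averaging over $G$, yields at best a $G$-equivariant extension, not a $D(G)$-equivariant one, and the claim that ``the regular nature of the induced object'' makes the averaged map respect the full Yetter--Drinfeld structure is unsubstantiated --- this is precisely the point where a new idea is needed.

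The paper's mechanism is structurally different: the injective object is not $\cB(H)$ (nor $\cB(L^2(G)\otimes H)$) but $\cR(\cB(H) \vnotimes \ell^\infty(\hat G))$, where the extra tensor factor $\ell^\infty(\hat G)$ carries the coproduct $\hat\Delta$ as the $\hat G$-structure. The non-compact half is handled not by averaging but by an induction-type adjunction (\cite{HHN1}*{Proposition 2.5}): $D(G)$-equivariant cb maps $A \to \cB(H) \vnotimes \ell^\infty(\hat G)$ correspond bijectively to merely $G$-equivariant cb maps $A \to \cB(H)$, so that $D(G)$-injectivity of the enlarged object reduces to $G$-injectivity of $\cB(H)$ --- and only \emph{there} does the Haar averaging you propose legitimately enter (\cite{HHN1}*{Lemma 2.10}). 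Correspondingly, the embedding of $A$ is not a faithful covariant representation into $\cB(H)$ alone but the Poisson integral $P_\pi = (\pi \otimes \id)\beta_A$ into $\cB(H) \vnotimes \ell^\infty(\hat G)$, whose image lands in the regular subalgebra $\cR(\cdot)$; this last point also takes care of continuity of the resulting envelope as a $D(G)$-action, an issue your proposal leaves untouched. To repair your proof you would need to replace your averaging step by this adjunction (or something equivalent), after which your concluding minimal-idempotent argument goes through as written.
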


This result, showing that the injective envelopes of continuous $D(G)$-actions are still continuous, improves a result of Hamana.

In Section~\ref{sec:inj-mod-cat}, we look at the categorical dual of quantum group actions.
We begin with the study of injective envelope of a pointed module category, i.e., a module category with a fixed object.
Here we make extensive use of the correspondence between algebra objects in a tensor category and cyclic pointed module categories, and the concept of multipliers between pointed module categories introduced in~\cite{jp1}.
This allows us to bring ideas about completely positive maps between C$^*$-algebras to the categorical setting.
In fact, special cases of such maps have aleady been considered in~\citelist{\cite{PV1}\cite{jonesghosh1}\cite{NY2}} to study representation theory and approximation properties, like Haagerup property and property (T), for rigid $C^*$-tensor categories, subfactors and $\lambda$-lattices.

Given a Hilbert space object $H \in \Hilb(\cC)$ (equivalently, an object in the ind-completion of $\cC$ in the sense of~\cite{NY2}), there is a C$^*$-algebra object $\cB(H)$ in $\Vect(\cC)$ playing the role of algebra of bounded operators in $H$.
Our first main result in this setting is the following analogue of Arveson's theorem.

\begin{theoremAlph}[Theorem~\ref{thm:arveson-type-thm-for-mod-cat}]
For any $H \in \Hilb(\cC)$, the C$^*$-$\cC$-module category $\cM_{\cB(H)}$ corresponding to $\cB(H)$ is injective.
\end{theoremAlph}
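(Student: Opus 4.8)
The plan is to reformulate injectivity through the Arveson extension property and then mimic the classical proof that $\cB(H)$ is injective. Concretely, $\cM_{\cB(H)}$ is injective exactly when, for every inclusion $\cN \hookrightarrow \cM$ of C$^*$-$\cC$-module categories given by a completely isometric completely positive multiplier, every completely positive multiplier $\Phi \colon \cN \to \cM_{\cB(H)}$ extends to a completely positive multiplier $\cM \to \cM_{\cB(H)}$. So the first step is to record this equivalence, using the dictionary between algebra objects, cyclic pointed module categories, and the multipliers of \cite{jp1}, and thereby reduce the theorem to proving that such an extension always exists.

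The second step is to reduce to finite-dimensional $H$ by compression, just as one reduces Arveson's theorem to maps into $\Mat_n$. Writing $H \in \Hilb(\cC)$ as the ind-limit of its finite-dimensional subobjects $H_i \in \cC$ with projections $p_i \in \cB(H)$, I would compress $\Phi$ to $p_i \Phi(\cdot) p_i$, a completely positive multiplier into $\cM_{\cB(H_i)}$. Here $\cB(H_i) = \underline{\End}(H_i)$ is an internal ``matrix algebra'', Morita equivalent to the unit, so completely positive multipliers into $\cM_{\cB(H_i)}$ are governed by positive forms.

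The third step handles the finite-dimensional case. Via the internal Stinespring/GNS picture, a completely positive multiplier from $\cN$ into $\cM_{\cB(H_i)}$ should be encoded by a positive functional on an operator system object built from $\cN$ and $H_i$, the categorical analogue of the correspondence between completely positive maps $S \to \Mat_n$ and positive functionals on a matrix amplification of $S$. A positive functional on a subsystem object extends, with the same norm, to the ambient system object by a Krein--Hahn--Banach argument; translating back produces a completely positive extension $\cM \to \cM_{\cB(H_i)}$ restricting correctly to $\cN$.

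Finally I would assemble these partial extensions into a single multiplier valued in $\cM_{\cB(H)}$ by passing to a limit over the directed family of finite-dimensional subobjects, exactly as Arveson takes a point-weak-$*$ limit. The limit should exist by a Banach--Alaoglu-type compactness of the set of completely positive multipliers of norm $\le \norm{\Phi}$ in a suitable point-weak-$*$ topology. I expect this last step to be the main obstacle: one must fix the topology on spaces of completely positive multipliers so that compactness applies and, crucially, verify that the limiting multiplier genuinely lands in $\cM_{\cB(H)}$ rather than in some larger completion, while still restricting to $\Phi$ on $\cN$. Pinning down the internal Stinespring identification precisely enough that the positive-functional extension applies is the other delicate point.
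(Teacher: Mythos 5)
Your proposal follows essentially the same route as the paper's proof: compression along an increasing net of finite-dimensional subobjects $p_\lambda \colon H \to H_\lambda$, resolution of the finite-dimensional case by the generalized Choi-matrix base-point shift (Lemma \ref{lem:choi-mat-gen}) reducing ucp multipliers into $\cC_U \simeq \cM_{\cB(H_\lambda)}$ to states on a shifted endomorphism algebra extended by Hahn--Banach (Lemmas \ref{lem:cp-map-to-C-and-state} and \ref{lemma:C-1-injective}, Proposition \ref{inj-indep-base-pt}), and finally a point-weak$^*$ limit exploiting that the fibers $\cB(H)(V)$ are $\ell^\infty$-products of operator spaces, hence dual spaces. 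The delicate points you flag at the end --- that the limit genuinely lands in $\cB(H)$, that it restricts correctly (via $p_\lambda^* p_\lambda \to \id_H$, which also gives unitality), and that positivity survives the limit (checked in the paper through the explicit correspondence \eqref{eq:map-to-multiplier}) --- are exactly the points the paper's proof verifies, so your outline is sound.
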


By a standard argument, this allows us to obtain an analogue of Hamana's theorem for module categories (Theorem~\ref{thm:injective-envelopes-mod-cat}).
We also show that the injectivity of a pointed module category $(\cM, m)$ is actually a property of the module subcategory of $\cM$ generated by $m$ (Proposition~\ref{C-U-injective}) through a generalization of the Choi matrix construction.
In particular, the injectivity of $G$-C$^*$-algebra is invariant under Morita equivalence with respect to finitely generated projective Hilbert modules.

Section~\ref{sec:op-sys-half-br-po-bim} starts by recalling the categorical duals of Yetter--Drinfeld C$^*$-algebras, following~\cite{YDpaper}.
They will be called {\em centrally cyclic bimodule categories}.
We then investigate injectivity and boundary theory of centrally pointed bimodule categories based on the schemes of previous sections.
Here our main result is the following analogue of Theorem~\ref{thmA}.

\begin{theoremAlph}[Theorem~\ref{thm:injective-envelope-cb-mod-cat}]
Every centrally pointed bimodule category has an injective envelope.
\end{theoremAlph}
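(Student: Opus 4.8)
The plan is to reproduce Hamana's two-step construction inside $\CB(\cC)$, using the Arveson-type injectivity of Theorem~\ref{thm:arveson-type-thm-for-mod-cat} to produce an ambient injective object and then extracting a minimal idempotent; this parallels the proof of Theorem~\ref{thm:inj-env-YD-G-alg}. First I would fix the relevant notion of injectivity in $\CB(\cC)$: a centrally pointed bimodule category $\cI$ is injective when every diagram of structure-preserving (bimodule, half-braiding-compatible) unital completely positive morphisms with a faithful first leg admits an extension into $\cI$. With this in place the two steps are (1) embed $(\cM,m)$ into an object of $\CB(\cC)$ that is injective, and (2) cut down to the smallest injective one inside the first via a convex semigroup of completely positive endomorphisms.

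For step (1), I would realize the distinguished object $m$, together with its unitary half-braiding, as coming from a Hilbert space object $H$ carrying a central (Drinfeld-center) structure, so that the associated $\cB(H)$ becomes not merely an injective $\cC$-module category but an injective object of $\CB(\cC)$. Concretely, the half-braiding on $m$ is exactly the datum promoting $H$ from $\Hilb(\cC)$ to a half-braided Hilbert space object, and the bimodule category $\cM_{\cB(H)}$ then inherits a centrally pointed bimodule structure. The content here is a half-braided refinement of Theorem~\ref{thm:arveson-type-thm-for-mod-cat}: one checks that the module-category injectivity proved there can be run compatibly with the right $\cC$-action and the half-braiding, so that the extension maps it produces may be taken in $\CB(\cC)$. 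This yields a faithful morphism $(\cM,m)\hookrightarrow \cM_{\cB(H)}$ in $\CB(\cC)$ with injective target.

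For step (2), I would consider the set $S$ of unital completely positive endomorphisms of the ambient injective object $\cI$ that are morphisms in $\CB(\cC)$ and restrict to the identity on the image of $\cM$. This is a point-weak${}^{*}$ compact convex semigroup, so by the Ryll-Nardzewski/Hamana rigidity argument it contains a minimal idempotent $\phi$. The image $\phi(\cI)$, equipped with the Choi-Effros product, is then injective and essential over $\cM$; the key point is that, since $\phi$ is a morphism in $\CB(\cC)$, its image retains the bimodule action and the half-braiding, and the half-braiding axioms survive because they already hold on $\cM$ and extend uniquely by essentiality. This produces the injective envelope in $\CB(\cC)$.

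The main obstacle I anticipate is keeping the half-braiding under control through both steps. In step (1) the difficulty is precisely the half-braided Arveson theorem: showing that the completely positive extensions guaranteed as module maps can be chosen to commute with the right $\cC$-action and to intertwine the half-braidings, which is where complete positivity and centrality interact most delicately. In step (2) the subtlety is that the minimal idempotent $\phi$ must be taken \emph{within} the structure-preserving semigroup $S$, and one must verify that its image is genuinely an object of $\CB(\cC)$, i.e.\ that the inherited half-braiding on $\phi(m)$ satisfies the hexagon and naturality axioms; I would deduce this from uniqueness of extensions (rigidity) rather than by direct computation. Everything else, including essentiality of the envelope, should follow the module-category arguments of Theorem~\ref{thm:injective-envelopes-mod-cat} once the half-braiding is shown to persist.
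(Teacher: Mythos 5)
Your step (2) is essentially the paper's argument and is fine, but your step (1) contains the genuine gap. You propose to promote the Hilbert space object $H$ coming from the Gelfand--Naimark embedding (Theorem \ref{thm:mod-cat-Gelfand-Naimark}) to a half-braided (Drinfeld-center) object, so that $\cM_{\cB(H)}$ itself becomes centrally pointed, and then to prove a ``half-braided refinement'' of Theorem \ref{thm:arveson-type-thm-for-mod-cat}. Neither ingredient is available: Theorem \ref{thm:mod-cat-Gelfand-Naimark} produces a plain object of $\Hilb(\cC)$, and the half-braiding $\sigma$ on $m$ gives no central structure on $H$ whatsoever --- it is a structure on one object of $\cM$, not on the representing Hilbert space object. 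Even granting such a structure, the claim that the ucp extensions produced in Theorem \ref{thm:arveson-type-thm-for-mod-cat} ``may be taken in $\CB(\cC)$'' is precisely the hard point; you name it as the main obstacle but offer no mechanism to resolve it, and over a general rigid C$^*$-tensor category there is no averaging procedure that would centralize a given extension. The quantum-group special case already signals that this route is wrong: there the injective Yetter--Drinfeld object is $\cR(\cB(H) \vnotimes \ell^\infty(\hat{G}))$ (Proposition \ref{prop:R-BH-linftyhatG-inj}), not $\cB(H)$ itself.

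The paper's proof avoids the issue by changing the ambient injective object. Instead of $\cM_{\cB(H)}$ it uses the co-induced category $\hat{\cM}_{\cB(H)}$ of Definition \ref{def:C-hat-U}, which carries a canonical central generator with identity half-braiding --- this is the categorical counterpart of tensoring with $\ell^\infty(\hat{G})$. The key point is Proposition \ref{prop:corresp-cent-br-ucp-mult-and-usual-mult}: central ucp multipliers $\cN \to \hat{\cM}'$ correspond bijectively to ordinary right-module ucp multipliers $\cN \to \cM'$, via $\hat{F}_{V,W}(T)_U = F_{UV,UW}(\Sigma_{U;V,W}(T))$, so the half-braiding of the \emph{source} is what enters, and no central structure on $H$ is ever needed. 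Injectivity of $\hat{\cM}_{\cB(H)}$ in $\CBOS(\cC)$ (Theorem \ref{centrallybraidedtheo2}) then falls out of Theorem \ref{thm:arveson-type-thm-for-mod-cat} by pure adjunction-style bookkeeping, and the embedding $\cM \to \hat{\cM}_{\cB(H)}$ (the categorical Poisson transform) is automatically central. With that ambient object in hand, your step (2) proceeds as in the paper, and more simply than you anticipate: since each element of the semigroup satisfies \eqref{eq:ucp-mult-for-centr-br-bimod} and the central structure of $\hat{\cM}_{\cB(H)}$ is trivial in the trivialized picture, the image of the minimal idempotent inherits the centrally pointed bimodule structure directly under the Choi--Effros product; no appeal to rigidity or uniqueness of extensions is required to verify the half-braiding axioms.
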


In the boundary theory for $D(G)$ developed in~\cite{HHN1}, one sees that every boundary action embeds into the Furstenberg--Hamana boundary.
Following a similar idea, we obtain the following generalization in the categorical framework, where a \emph{boundary} of $\cC$ is a centrally pointed module category over $\cC$ such that any ucp multiplier to another category is completely isometric.

\begin{theoremAlph}[Theorem~\ref{thm:charac-bnd}]
Every boundary category $\cM$ of $\cC$ is a centrally pointed subcategory of $\partial_\FH(\cC)$.
\end{theoremAlph}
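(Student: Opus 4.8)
The plan is to transport the classical argument---every $G$-boundary is a quotient of (hence embeds into) the Furstenberg boundary---through the injective-envelope machinery of the preceding sections. Recall that $\partial_\FH(\cC)$ is the injective envelope in $\CB(\cC)$ of the trivial centrally pointed module category $\mathbb{1}$ (the one corresponding to $\C$ under Theorem \ref{thm:yetterdrinfeldtheo1}), which exists by Theorem \ref{thm:injective-envelope-cb-mod-cat}. In particular $\partial_\FH(\cC)$ is injective in $\CB(\cC)$, and the structural inclusion $\iota_0 \colon \mathbb{1} \to \partial_\FH(\cC)$ is a completely isometric morphism.

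First I would construct a ucp multiplier $\phi \colon \cM \to \partial_\FH(\cC)$. Any centrally pointed module category contains the trivial one as its unit, so there is a completely isometric morphism $u \colon \mathbb{1} \hookrightarrow \cM$ in $\CB(\cC)$ (on the algebra side this is just the unital inclusion $\C \hookrightarrow B_\cM$). Applying the injectivity of $\partial_\FH(\cC)$ to the completely isometric $u$ and the morphism $\iota_0$ produces a ucp multiplier $\phi \colon \cM \to \partial_\FH(\cC)$, again a morphism in $\CB(\cC)$, with $\phi \circ u = \iota_0$.

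Next I would invoke the defining property of a boundary. Since $\cM$ is a boundary, every ucp multiplier out of $\cM$ is completely isometric; applied to $\phi$ this shows that $\phi$ is a complete order embedding of $\cM$ into $\partial_\FH(\cC)$ that intertwines the half-braidings on the distinguished objects.

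The remaining, and principal, step is to upgrade the completely isometric central morphism $\phi$ to a genuine identification of $\cM$ with a centrally pointed subcategory of $\partial_\FH(\cC)$. Writing $m$ for the distinguished object of $\cM$, I would take the target subcategory to be the one generated by $\phi(m)$, and argue that $\phi$ is fully faithful onto it and compatible with the module actions. The crux is that, exactly as in the C$^*$-algebraic theory, a completely isometric ucp map need not be multiplicative a priori; to close this gap I expect to use the generalized Choi-matrix construction of Proposition \ref{C-U-injective} together with the rigidity of the injective envelope $\partial_\FH(\cC)$, which pins down the multiplicative domain and forces $\phi$ to preserve compositions and the central structure. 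This multiplicativity and fullness upgrade is where I anticipate the real work, the earlier steps being essentially formal consequences of injectivity and the boundary hypothesis.
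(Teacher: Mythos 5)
Your first two steps coincide with the paper's: use the injectivity of $\partial_\FH(\cC)$ to extend the structural morphism along the central embedding $(\cC,1_\cC)\hookrightarrow\cM$ of Proposition \ref{prop:C-embeds-into-half-braided-mod-cat}, then invoke the boundary property to conclude that the resulting central ucp multiplier $\phi\colon\cM\to\partial_\FH(\cC)$ is completely isometric. But the step you defer---upgrading the complete isometry to a multiplicative, central embedding---is the actual content of the theorem, and the tools you name would not close it. Proposition \ref{C-U-injective} (via the Choi-matrix construction of Lemma \ref{lem:choi-mat-gen}) concerns independence of injectivity from the base point and has no bearing on multiplicativity. A multiplicative-domain argument, meanwhile, needs a ucp map going \emph{back}, i.e.\ a conditional expectation onto the image of $\phi$, which your outline never constructs: a completely isometric ucp map with no ucp left inverse need not respect composition, and the rigidity of $\partial_\FH(\cC)$ by itself only controls ucp self-maps of $\partial_\FH(\cC)$ fixing $\cC$, not maps out of $\cM$.

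The paper closes this gap by a double-extension argument through an ambient injective object, which is the missing idea. One embeds $\cM$ by a genuine functor $\pi$ into some $\hat\cM_{\cB(H)}$, injective by Theorem \ref{centrallybraidedtheo2}; injectivity of $\hat\cM_{\cB(H)}$ extends $\pi$ along $i=\phi$ to a central ucp multiplier $\Phi\colon\partial_\FH(\cC)\to\hat\cM_{\cB(H)}$ with $\Phi i=\pi$, and injectivity of $\partial_\FH(\cC)$ extends $i$ along $\pi$ to $\Psi$ with $\Psi\pi=i$. Then $\Psi\Phi$ is a $\cC$-linear ucp transformation of $\partial_\FH(\cC)$, and the realization of $\partial_\FH(\cC)$ as the image of a minimal idempotent in $\hat\cC$ (via Proposition \ref{minimalidemp}) forces $\Psi\Phi=\Id$; hence $E=\Phi\Psi$ is a conditional expectation onto $\Phi(\partial_\FH(\cC))$, equipped with the Choi--Effros product. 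Since $\pi$ is multiplicative, the computation $E(\pi(g)\pi(f))=\Phi\Psi\pi(gf)=\Phi i(gf)=\pi(gf)$ shows that the Choi--Effros composition restricts to the original one on $\pi(\cM)$, so $\pi$ corestricts to a central functor $\cM\to\Phi(\partial_\FH(\cC))$, which the boundary property turns into an embedding. This conditional-expectation mechanism is what actually ``pins down'' multiplicativity, and without it your argument stalls exactly where you anticipated the real work. A further caveat: your premise that $\partial_\FH(\cC)$ is the injective envelope in $\CB(\cC)$ of the trivial object is plausible but not established in the paper; the proof only uses that $\partial_\FH(\cC)$ is injective for central ucp multipliers and rigid over $\cC$, both of which follow from its concrete construction inside $\hat\cC$.
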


In Appendix~\ref{app:intr-char-C-star-alg} we give a supplementary result on C$^*$-algebra objects introduced in~\cite{jp1}, establishing an intrinsic characterization of such objects that does not directly refer to a C$^*$-category structure on the associated module category.

\bigskip
\paragraph{Acknowledgements}
We would like to thank the anonymous reviewers for their helpful comments which helped us improve the presentation of the paper.

\section{Preliminaries}\label{sec:prelim}

When $H$ is a Hilbert space, we denote the algebra of bounded operators on $H$ by $\cB(H)$, while the algebra of compact operators is denoted by $\cK(H)$.
The dual Hilbert space is denoted by $\bar{H}$, and we write $\xi \mapsto \bar{\xi}$ for the anti-linear isomorphism $H \to \bar{H}$.

The multiplier algebra of a C$^*$-algebra $A$ is denoted by $\cM(A)$.
We freely identify $\cM(\cK(H))$ with $\cB(H)$.

As tensor product of C$^*$-algebras we always take the minimal tensor product, that we denote by $A \otimes B$.
For von Neumann algebras, their von Neumann algebraic tensor product is denoted by $M \vnotimes N$.

\subsection{Operator systems}

An \emph{operator system} is a unital and selfadjoint closed subspace of a unital C$^*$-algebra, which we write as $1 \in S \subset A$.
Such objects form a category with unital completely positive maps as morphisms.
An operator system is \emph{injective} if it is injective in this category.

When $S$ is an operator system, its \emph{injective envelope} is given by an injective operator system $\cI(S)$ and a complete isometry $\phi \colon S \to \cI(S)$ such that $\phi$ is \emph{essential} in the sense that for any ucp map $\psi \colon \cI(S) \to S'$ to another operator system, $\psi \phi$ is completely isometric if and only if $\psi$ is.

The injective envelope of an operator systems always exists and is unique up to complete order isomorphisms~\cite{h1}, with various generalizations imposing additional structures on operator systems.
A key technical step in the construction of injective envelopes is the following proposition.

\begin{proposition}[\cite{HHN1}*{Proposition 2.1}]\label{minimalidemp}
Assume $X$ is a subspace of a dual Banach space $Y^*$, and $S$ is a convex semigroup of contractive linear maps $X \to X$ such that, if we consider $S$ as a set of maps $X \to Y^*$, then $S$ is closed in the topology of pointwise weak* convergence.
Then there is an idempotent $\phi_0 \in S$ such that
\[
  \phi_0 \psi \phi_0 = \phi_0 \quad (\psi \in S).
\]
Moreover, $\phi_0$ is minimal with respect to the preorder relation $\preceq$ on $S$ defined as
\[
  \phi \preceq \psi \Leftrightarrow \forall x \in X \colon \norm{\phi(x)} \leq \norm{\psi(x)}.
\]
\end{proposition}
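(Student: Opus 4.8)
The plan is to run the classical Hamana minimization argument, with convexity playing the role that positivity plays in the operator-system setting. The first step is to make $S$ compact. Since every $\phi \in S$ is contractive, for each $x \in X$ the vector $\phi(x)$ lies in the weak*-compact ball $\{y \in Y^* : \norm{y} \le \norm{x}\}$ by Banach--Alaoglu, so $S$ embeds into the product $\prod_{x \in X} \{y : \norm{y} \le \norm{x}\}$, compact by Tychonoff, and the topology of pointwise weak* convergence is exactly the subspace topology there. As $S$ is closed, $S$ is compact. I would also record the one continuity fact available in this generality: for fixed $\phi$, right multiplication $\psi \mapsto \psi\phi$ is continuous, since $\psi_i \to \psi$ pointwise forces $\psi_i\phi(x) = \psi_i(\phi(x)) \to \psi(\phi(x))$ weak*.

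Next I would extract a $\preceq$-minimal element by Zorn's lemma, the only point to verify being that every $\preceq$-chain $C$ has a lower bound in $S$. Regarding $C$ as a net directed by the reverse of $\preceq$, compactness gives a convergent subnet with limit $\phi \in S$; as the norm of $Y^*$ is weak*-lower semicontinuous and as the tail of the chain past any fixed $\psi \in C$ satisfies $\norm{\,\cdot\,(x)} \le \norm{\psi(x)}$, I obtain $\norm{\phi(x)} \le \norm{\psi(x)}$ for all $x$, i.e.\ $\phi \preceq \psi$. Since $\preceq$ is only a preorder, I pass to the quotient poset and apply Zorn to produce a minimal $\phi_0$.

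The semigroup structure then yields the one-sided relations. For any $\psi \in S$, contractivity gives $\norm{\psi\phi_0(x)} \le \norm{\phi_0(x)}$, so $\psi\phi_0 \preceq \phi_0$, and minimality upgrades this to equality of norm-functions; the same applies to $\phi_0\psi\phi_0$. To promote $\phi_0$ to a genuine idempotent I would use that $S\phi_0$ is a \emph{compact} (right multiplication being continuous) convex sub-semigroup, hence contains an idempotent $e$ by the existence of idempotents in compact semigroups; since $e \preceq \phi_0$ forces $e$ to have the same norm-function, $e$ is again $\preceq$-minimal, and I replace $\phi_0$ by $e$ so that $\phi_0^2 = \phi_0$.

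The main obstacle is the last step: promoting the norm-equality ``$\phi_0\psi\phi_0$ has the same norm-function as $\phi_0$'' to the operator identity $\phi_0\psi\phi_0 = \phi_0$. This cannot follow from $\preceq$ alone, which is blind to isometric ``rotations'' of the range of $\phi_0$, and it is precisely here that convexity is indispensable. The idea is that $\phi_0 S\phi_0$ is a convex semigroup with identity $\phi_0$, each element of which restricts to a norm-preserving map of $W := \overline{\phi_0(X)}$ (because $\phi_0$ acts as the identity on its own range and all these maps share the norm-function of $\phi_0$); moreover two such maps agree on $X$ as soon as they agree on $W$. A convex family of isometries that is closed under averaging and contains $\id_W$ should collapse to $\{\id_W\}$, the heuristic being that averaging an isometry $g \neq \id_W$ with $\id_W$ strictly shrinks some vector, contradicting that every $(1-t)\id_W + tg$ is again isometric. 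Making this collapse rigorous in a general, possibly non-strictly-convex, dual Banach space is the delicate heart of the proof; once it is in place, $\phi_0 S\phi_0 = \{\phi_0\}$ is exactly the asserted absorbing identity, with $\preceq$-minimality of $\phi_0$ preserved throughout.
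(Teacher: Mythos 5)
Your route to a minimal idempotent is sound and matches the standard argument (the paper itself gives no proof here, quoting \cite{HHN1}): compactness of $S$ via Banach--Alaoglu and Tychonoff, lower bounds for chains via compactness and weak*-lower semicontinuity of $\tau \mapsto \norm{\tau(x)}$, continuity of right translations $\tau \mapsto \tau\phi$, and extraction of an idempotent from the compact right-topological subsemigroup $S\phi_0$ by Ellis--Namakura, with minimality preserved since any $e = \psi\phi_0 \preceq \phi_0$ must share the norm-function of $\phi_0$. But the proposal stops exactly where the proposition's main content begins: the absorbing identity $\phi_0\psi\phi_0 = \phi_0$. You reduce it to the claim that a convex semigroup of norm-preserving maps on $W = \overline{\phi_0(X)}$ containing $\id_W$ collapses to $\{\id_W\}$, and you concede you cannot prove this. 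Worse, the heuristic offered --- that averaging an isometry $g \neq \id_W$ with $\id_W$ strictly shrinks some vector --- is false without strict convexity: spheres of general dual Banach spaces contain flat faces, along which $(1-t)\id_W + t g$ can remain norm-preserving for every $t$. So this is a genuine gap, not a deferred technicality, and no collapse lemma of this kind is available to fill it.

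The mechanism that actually closes the argument (Hamana's, as followed in \cite{HHN1}) uses convexity differently. With $\phi_0$ a minimal idempotent, set $\sigma = \phi_0\psi\phi_0$, so that $\norm{\sigma(x)} = \norm{\phi_0(x)}$ for all $x$ and $\sigma\phi_0 = \phi_0\sigma = \sigma$. Form $\omega = \frac{1}{2}(\phi_0 + \sigma) \in S$ (this is where convexity enters) and let $\rho \in S$ be a cluster point of the Ces\`aro means $\rho_n = \frac{1}{n}\sum_{k=1}^{n}\omega^k$. Since $\norm{\omega^k(x)} \leq \norm{\omega(x)} \leq \norm{\phi_0(x)}$, lower semicontinuity gives $\rho \preceq \phi_0$, hence $\norm{\rho(x)} = \norm{\phi_0(x)}$ by minimality. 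The telescoping identity $\rho_n\omega - \rho_n = \frac{1}{n}(\omega^{n+1} - \omega)$, which tends to $0$ pointwise in norm, combined with continuity of right translation, yields $\rho\omega = \rho$; since also $\omega\phi_0 = \omega$ forces $\rho\phi_0 = \rho$, we get $\rho\sigma = 2\rho\omega - \rho\phi_0 = \rho$, i.e.\ $\rho(\sigma(x) - \phi_0(x)) = 0$ for all $x$. Finally, $\rho\phi_0 = \rho$ together with the norm equality shows $\rho$ is isometric on $\phi_0(X)$, hence on $W$; as $\sigma(x) - \phi_0(x) \in W$, this forces $\sigma(x) = \phi_0(x)$, i.e.\ $\phi_0\psi\phi_0 = \phi_0$. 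Note that minimality is invoked twice (for the powers $\omega^k$ and for $\rho$), and convexity only to form $\omega$ and the Ces\`aro means --- precisely the ingredients your sketch never brings to bear on the final step.
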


\subsection{\texorpdfstring{C$^*$}{C*}-tensor categories}

Here we mostly follow~\citelist{\cite{MR3204665}\cite{NY3}}.

Given a \emph{C$^*$-category} $\cC$, we denote the spaces of morphisms from an object $X$ to another $Y$ as $\cC(X, Y)$.
The involution $\cC(X, Y) \to \cC(Y, X)$ is denoted by $T \mapsto T^*$, and the norm is by $\norm{T}$, so that we have the C$^*$-identity $\norm{T^* T} = \norm{T}^2$.
We also assume that $X \oplus Y$ make sense with structure morphisms given by isometries $X \to X \oplus Y \leftarrow Y$.
Under these assumptions, $\cC(X, Y)$ is naturally a right Hilbert module over the unital C$^*$-algebra $\cC(X) = \cC(X, X)$.
We tacitly assume that $\cC$ is closed under taking subobjects, i.e., any projection in $\cC(X)$ corresponds to a direct summand of $X$.

A \emph{C$^*$-tensor category} is a C$^*$-category endowed with monoidal structure given by: a $*$-bifunctor $\otimes\colon \cC \times \cC \to \cC$, a unit object $1_\cC$, and unitary natural isomorphisms
\begin{align*}
  1_\cC & \otimes U \to U \leftarrow U \otimes 1_\cC, & \Phi\colon (U \otimes V) \otimes W & \to U \otimes (V \otimes W)
\end{align*}
for $U, V, W \in \cC$, satisfying a standard set of axioms.
Without losing generality, we may and do assume that $\cC$ is \emph{strict} so that the above morphisms are identity, and that $1_\cC$ is simple, unless explicitly stated otherwise.

A \emph{rigid} C$^*$-tensor category is a C$^*$-tensor category where any object $U$ has a dual given by: an object $\bar U$ and morphisms
\begin{align*}
  R \colon 1_\cC & \to \bar U \otimes U, & \bar R \colon 1_\cC & \to U \otimes \bar U
\end{align*}
satisfying the conjugate equations for $U$.

When $\cC$ is a C$^*$-tensor category, a \emph{right (C$^*$-)$\cC$-module category} is given by a C$^*$-category $\cM$, together with a $*$-bifunctor $\stmryolt \colon \cM \times \cC \to \cM$ and unitary natural isomorphisms
\begin{align*}
  X & \stmryolt 1_\cC  \to X, & \Psi\colon (X \stmryolt U) \stmryolt V & \to X \stmryolt (U \otimes V)
\end{align*}
for $X \in \cM$ and $U, V \in \cC$, satisfying standard set of axioms.
Again we may and do assume that module category structures are strict so that the above morphisms are identities.

A \emph{functor of right $\cC$-module categories} is given by a functor $F\colon \cM \to \cM'$ of the underlying linear categories, together with natural isomorphisms
\[
  F_2 = F_{2;m,U}\colon F(m) \stmryolt U \to F(m \stmryolt U) \quad (m \in \cM, U \in \cC)
\]
satisfying the standard compatibility conditions with structure morphisms of $\cM$ and $\cM'$.
If $\cM$ and $\cM'$ are C$^*$-$\cC$-module categories, a functor $(F,F_2)$ as above is a said to be a \emph{functor of right C$^*$-$\cC$-module categories} if it is a $*$-functor and the natural isomorphism $F_2$ is unitary.

\subsection{Vector spaces and algebras over \texorpdfstring{$\cC$}{CC}}

Let us review the correspondence between module categories and algebra objects for linear tensor categories.
Further details on the following construction can be found in~\cite{jp1}.

Let $\cC$ be a rigid tensor category.
We denote by $\Vect(\cC)$ the category whose objects are the contravariant linear functors $\cC \to \Vect$ and morphisms are natural transformations.
For $V \in \Vect(\cC)$, we call its values $V(X)$ on $X \in \cC$ the \emph{fibers} of $V$.

Due to the semisimplicity of $\cC$, it is possible to give a concrete description of $\Vect(\cC)$; the morphism space between objects $V$ and $W$ of $\Vect(\cC)$ can be decomposed into the algebraic direct product as
\[
  \Vect(\cC)(V,W) \cong \prod_{i \in \Irr(\cC)} \cL(V(U_i),W(U_i)),
\]
while the fibers can be written as
\[
  V(Y) \cong \bigoplus_{i \in \Irr(\cC) } V(U_i) \otimes \cC(Y,U_i).
\]
Under this presentation of $V$, its value on a morphism $\psi \in \cC(X,Y) = \cC^{\op}(Y^\op,X^\op)$ would then be
\[
  V(\psi) = \bigoplus_{i \in \Irr(\cC)} \id_{V(U_i)} \otimes \psi^\#
\]
with $\psi^\#$ denoting pre-composition with $\psi$.

The category $\Vect(\cC)$ becomes a monoidal category by introducing the monoidal product
\begin{equation}\label{eq:mon-prod-vec-obj-decomp}
  (V \otimes W)(X) =  \bigoplus_{i, j \in \Irr(\cC)} V(U_i) \otimes \cC(X, U_i \otimes U_j) \otimes W(U_j)
\end{equation}
together with naturally defined structure morphisms (see~\cite{jp1}).
Then the Yoneda embedding $X \to \cC( \cdot ,X) $ is a fully faithful monoidal functor from $\cC$ to $\Vect(\cC)$.
We regard $\cC$ as a monoidal subcategory of $\Vect(\cC)$ by means of this embedding.

An \emph{algebra object} in $\Vect(\cC)$ is given by $A \in \Vect(\cC)$ together with natural transformations $m\colon A \otimes A \to A$ and $i\colon 1_{\cC} \to A$ that make the algebra diagrams commute.
We denote by $\Alg(\cC)$ the category whose object are algebras in $\Vect(\cC)$ and the morphisms are algebra natural transformations.
In other words, the algebra objects in $\Vect(\cC)$ are exactly the lax tensor functors $\cC^{\op} \to \Vect$.

\begin{definition}
A \emph{pointed} right $\cC$-module category is a pair $(\cM, m)$, where $\cM$ is a right $\cC$-module category and $m$ is an object of $\cM$.
A $\cC$-module category is said to be \emph{cyclic} if it has a generating object $m$, in the sense that any object $X \in \cM$ is a direct summand of $m \stmryolt U$ for some $U \in \cC$.
By a \emph{pointed cyclic $\cC$-module category} we mean a pointed category $(\cM, m)$ such that $m$ is a generator of $\cM$.
\end{definition}

When the choice of $m$ is implicit, we just write $\cM$ instead of $(\cM, m)$, and $m_\cM = m$.

\begin{definition}
A \emph{functor of pointed right $\cC$-module categories} is given by a pair consisting of a $\cC$-module functor $F \colon \cM \to \cM'$ and an isomorphism $F_0 \colon m_{\cM'} \to F(m_\cM)$.
We denote by $\cMod_\cC(*)$ the category of pointed cyclic right $\cC$-module categories, with these functors as morphisms.
\end{definition}

\begin{theorem}[\cites{ostrik03,jp1}]\label{equivalgmod}
There is an equivalence of categories $\Alg(\cC) \simeq \cMod_\cC(*)$.
\end{theorem}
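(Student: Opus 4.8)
The plan is to construct the equivalence of Theorem~\ref{equivalgmod} explicitly, via internal endomorphism algebras in one direction and categories of internal modules in the other, and then to exhibit the two composites as naturally isomorphic to the identity functors. For the direction $\mathsf{E}\colon \cMod_*(\cC) \to \Alg(\cC)$, to a pointed cyclic module category $(\cM, m)$ I associate the object $\underline{\End}(m) \in \Vect(\cC)$ with fibers $\underline{\End}(m)(U) = \cM(m \stmryolt U, m)$, the contravariant functoriality being precomposition with $\id_m \stmryolt \psi$. Using the decomposition \eqref{eq:mon-prod-vec-obj-decomp}, I define the multiplication $\underline{\End}(m) \otimes \underline{\End}(m) \to \underline{\End}(m)$ on the summand indexed by $\phi \in \cC(X, U_i \otimes U_j)$ by sending $f \in \cM(m \stmryolt U_i, m)$ and $g \in \cM(m \stmryolt U_j, m)$ to the composite $m \stmryolt X \xrightarrow{\id_m \stmryolt \phi} m \stmryolt (U_i \otimes U_j) = (m \stmryolt U_i) \stmryolt U_j \xrightarrow{g \stmryolt \id_{U_j}} m \stmryolt U_j \xrightarrow{f} m$, with unit $\id_m \in \cM(m, m)$; the module-category axioms (strictness and associativity of $\stmryolt$) yield associativity and unitality. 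A pointed module functor $(F, F_0)$ induces an algebra homomorphism through $F_0$ and the coherence isomorphism $F_2$, so $\mathsf{E}$ is functorial.

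In the opposite direction $\mathsf{M}\colon \Alg(\cC) \to \cMod_*(\cC)$, to an algebra $A$ I associate $\cM_A$, the idempotent completion of the free left $A$-modules $A \otimes U$ ($U \in \cC$) inside the category of left $A$-modules in $\Vect(\cC)$, with right $\cC$-action $\stmryolt U = - \otimes U$ (via the Yoneda embedding, compatible with the left $A$-action since the two act on opposite sides) and distinguished object the regular module $A$. Every object is a summand of some $A \stmryolt U$, so $A$ is a generator and $(\cM_A, A)$ is indeed a pointed cyclic module category; an algebra homomorphism $\phi \colon A \to B$ induces the induction functor $B \otimes_A - $, carrying $A$ to $B$, which makes $\mathsf{M}$ functorial.

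First I would verify $\mathsf{E}\mathsf{M} \cong \id_{\Alg(\cC)}$: for an algebra $A$ this is the chain $\underline{\End}(A)(U) = \cM_A(A \stmryolt U, A) = \Hom_A(A \otimes U, A) \cong \Hom_{\Vect(\cC)}(U, A) \cong A(U)$, where the middle step is the free-module adjunction and the last is the contravariant Yoneda lemma in $\Vect(\cC)$; one then checks this natural isomorphism is an algebra isomorphism. Next I would verify $\mathsf{M}\mathsf{E} \cong \id_{\cMod_*(\cC)}$ by producing the comparison functor $G = \underline{\Hom}(m, -) \colon \cM \to \cM_{\underline{\End}(m)}$, whose fibers $\underline{\Hom}(m, X)(U) = \cM(m \stmryolt U, X)$ carry the left $\underline{\End}(m)$-action given by composition as above, with $G(m)$ the regular module. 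Rigidity of $\cC$ (Frobenius reciprocity $\cM(m \stmryolt V, X \stmryolt U) \cong \cM(m \stmryolt (V \otimes \bar U), X)$) furnishes the coherence data making $G$ a $\cC$-module functor. For full faithfulness I treat the objects $X = m \stmryolt U$ first: there $G(X) = \underline{\End}(m) \otimes U$ is free, and $\cM_{\underline{\End}(m)}(\underline{\End}(m) \otimes U, G(X')) \cong G(X')(U) = \cM(m \stmryolt U, X') = \cM(X, X')$ once more by the free-module adjunction and Yoneda; the general case then follows since every object of $\cM$ is a summand of some $m \stmryolt U$ by cyclicity and both categories are idempotent-complete. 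Essential surjectivity holds because every object of $\cM_{\underline{\End}(m)}$ is a summand of a free module $\underline{\End}(m) \otimes U = G(m \stmryolt U)$, and full faithfulness transports the corresponding idempotent back to $\cM$; since $G$ sends $m$ to the regular module, it is a morphism in $\cMod_*(\cC)$.

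The main obstacle I expect is the coherence bookkeeping in the middle: checking that the composition product really defines a \emph{natural} transformation $\underline{\End}(m) \otimes \underline{\End}(m) \to \underline{\End}(m)$ compatible with the fiber decomposition \eqref{eq:mon-prod-vec-obj-decomp}, and, more delicately, that $G$ is a $\cC$-module functor whose full faithfulness holds at the level of $\underline{\End}(m)$-module maps rather than merely maps in $\Vect(\cC)$. The free-module adjunction together with the semisimplicity of $\cC$ reduces every identity to a computation on irreducible objects, but the coherence isomorphisms (the data $F_2$ for $G$, and the intertwining of multiplications in $\mathsf{E}\mathsf{M} \cong \id$) require careful diagram chasing, and this is precisely where the strictness and rigidity assumptions are used in an essential way.
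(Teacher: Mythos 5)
Your proposal is correct and follows essentially the same route as the paper, which only sketches this standard Ostrik/Jones--Penneys correspondence: your $\underline{\End}(m)$ is precisely the paper's $A_\cM$ with fibers $\cM(m \stmryolt X, m)$, your $\cM_A$ (idempotent completion of the free modules $A \otimes U$, whose morphism spaces the free-module adjunction and rigidity identify with the paper's $A(X \otimes \bar Y)$) is the paper's model, and the two composites are handled by the same adjunction-plus-Yoneda argument. The only flaw is a label slip in your displayed multiplication: from $(m \stmryolt U_i) \stmryolt U_j$ the middle arrow must be $f \stmryolt \id_{U_j}$ (with $f \in \cM(m \stmryolt U_i, m)$) followed by $g$, so the product is $g\,(f \stmryolt \id_{U_j})(\id_m \stmryolt \phi)$, since $g \stmryolt \id_{U_j}$ does not typecheck there.
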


Let us sketch this correspondence.
Given $A \in \Alg(\cC)$, we get a right $\cC$-module category by taking the category $\cM_A$ of left $A$-module objects in $\cC$.
Concretely, we can start with objects $X_A \in \cM_A$ for $X \in \cC$ (corresponding to the left $A$-modules $A \otimes X$) and morphism sets
\[
  \mathcal{M}_A(X_A,Y_A) = A(X \otimes \bar{Y}),
\]
and then take the idempotent completion.
In this presentation the $\cC$-module structure is induced by $X_A \stmryolt U = (X \otimes U)_A$.

Conversely, given $\mathcal{M} \in \cMod_\cC(*)$, we define $A_\cM \in \Vect(\cC)$ by setting its fibers as
\[
  A_\cM(X) = \cM(m_\cM \stmryolt X, m_\cM),
\]
and its action on $\psi \in \cC(X,Y)$ as
\[
  A_\cM(\psi)\colon \cM(m_\cM \stmryolt Y, m_\cM) \to \cM(m_\cM \stmryolt X, m_\cM), \quad f \mapsto f (\id \otimes \psi) .
\]

Now, let us assume that $\cC$ is a rigid C$^*$-tensor category.

\begin{definition}[\cite{jp1}]\label{def:c-star-alg-obj}
An algebra $A \in \Alg(\cC)$ is said to be a \emph{$C^*$-algebra object} in $\cC$ when the corresponding pointed cyclic $\cC$-module category $\cM_A$ admits a compatible structure of C$^*$-category, and \emph{a $W^*$-category} when $\cM_A$ admits a compatible structure of $W^*$-category.
\end{definition}

See Appendix~\ref{app:intr-char-C-star-alg} for an intrinsic characterization of this concept.
When $A$ is a C$^*$-algebra object in the above sense, we frequently regard $A(U \otimes \bar U)$ as a C$^*$-algebra up to the isomorphism $A(U \otimes \bar U) \cong \cM_A(U_A, U_A)$, which is canonical.

\begin{definition}\label{hilbertspaces}
The category $\Hilb(\cC)$ is the subcategory of $\Vect(\cC)$ consisting of contravariant $*$-functors $\cC \to \Hilb$, and having uniformly bounded natural transformations as morphisms: for $H,K \in \Hilb(\cC)$,
\[
  \Hilb(\cC)(H,K) \cong \ell^\infty\mhyph\smashoperator[l]{\prod_{i \in \Irr(\cC)}} \cB(H(U_i),K(U_i)).
\]
\end{definition}

$\Hilb (\cC)$ becomes monoidal subcategory of $\Vect(\cC)$ by giving the following inner product to the fibers of $H \otimes K$.
On the space $H(U_i) \otimes \cC(X, U_i \otimes U_j) \otimes K(U_j)$, consider the Hermitian inner product characterized by
\[
  (\xi_2 \otimes \alpha_2 \otimes \eta_2, \xi_1 \otimes \alpha_1 \otimes \eta_1 ) 1_X = \frac{1}{d_i d_j} (\xi_2, \xi_1) (\eta_2, \eta_1) \alpha_1^* \alpha_2.
\]
Then $(H \otimes K)(X)$ has an inner product such that~\eqref{eq:mon-prod-vec-obj-decomp} gives an orthogonal decomposition.

\begin{example}
When $\cC = \Rep(G)$ for some compact quantum group $G$, the category $\Hilb(\Rep(G))$ is then the category of all unitary representations of $G$.
\end{example}

\begin{remark}\label{equivhilbindcat}
An object $H \in \Hilb(\cC)$ can be interpreted as the infinite direct sum $\bigoplus_i H(U_i) \otimes U_i$.
This way $\Hilb(\cC)$ can be identified with the ind-category $\ind(\cC)$ of $\cC$ as defined in~\cite{NY2}*{Section 2}.
\end{remark}

We will work with the following W$^*$-algebra object $\cB(H)$ for $H \in \Hilb(\cC)$~\cite{jp1}*{Example 10}.
The fiber is given by $\cB(H)(U) = \Hilb(\cC)(H \otimes U, H)$, with a natural algebra structure such that the associated module category $\cM_{\cB(H)}$ is generated by objects of the form $U_{\cB(H)}$ for $U \in \cC$ and the morphism spaces are given by
\[
  \cM_{\cB(H)}(U_{\cB(H)}, V_{\cB(H)}) = \Hilb(\cC)(H \otimes U, H \otimes V).
\]

Later, we will use the following analogue of the Gelfand--Naimark Theorem to construct injective envelopes of $C^*$-module categories.

\begin{theorem}[\cite{jp1}*{Theorem 4}]\label{thm:mod-cat-Gelfand-Naimark}
Every $C^*$-algebra object in $\Vect(\cC)$ admits a faithful representation into $\cB(H)$, for some $H \in \Hilb(\cC)$.
\end{theorem}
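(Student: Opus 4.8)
\emph{The plan is to prove the statement as a categorical Gelfand--Naimark (GNS) theorem}, using the equivalence of Theorem~\ref{equivalgmod} to translate a representation $A \to \cB(H)$ into a faithful pointed $\cC$-module $*$-functor from $\cM_A$ into $\cM_{\cB(H)}$. Recall that $\cM_{\cB(H)}$ is the C$^*$-module category whose objects are (summands of) the $H \otimes U$ and whose morphisms $U_{\cB(H)} \to V_{\cB(H)}$ are the bounded natural transformations $H \otimes U \to H \otimes V$; thus $\cM_{\cB(H)}$ plays the role of the category of bounded operators on $H$, and embedding $\cM_A$ into it is exactly asking for a faithful representation. Since any $\cC$-module functor is determined by its value on the generator $m = m_{\cM_A}$ together with the module structure, the whole construction amounts to producing a single Hilbert space object $H \in \Hilb(\cC)$ that will serve as the image of $m$.

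First I would set $B = A(1_\cC) \cong \End_{\cM_A}(m)$, which is a unital C$^*$-algebra because $A$ is a unital C$^*$-algebra object, and fix a faithful representation $\rho \colon B \to \cB(K)$ on a Hilbert space $K$ by the classical Gelfand--Naimark theorem. The C$^*$-category structure on $\cM_A$ makes each morphism space $\cM_A(m \stmryolt U, m)$ a Hilbert module over $B$, so I can define $H \in \Hilb(\cC)$ fibrewise by the interior tensor product
\[
H(U) = \cM_A(m \stmryolt U, m) \otimes_B K,
\]
with $H$ acting on a morphism $\psi \in \cC(X, Y)$ by precomposition with $\id_m \stmryolt \psi$. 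One checks that this is a contravariant $*$-functor $\cC \to \Hilb$, hence an object of $\Hilb(\cC)$. The composition structure of $\cM_A$ --- left multiplication of $A(U) = \cM_A(m \stmryolt U, m)$ onto the morphism spaces --- then assembles into a natural transformation $\pi_U \colon A(U) \to \Hilb(\cC)(H \otimes U, H) = \cB(H)(U)$, and a routine diagram chase shows that $\pi = (\pi_U)_U$ is a $*$-homomorphism of algebra objects $A \to \cB(H)$.

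The essential point, and the step I expect to be the main obstacle, is to verify that $\pi$ is both well defined as a morphism in $\Hilb(\cC)$ and faithful. Well-definedness requires that the operators $\pi_U(a)$ have norms bounded uniformly across the fibres indexed by $\Irr(\cC)$, as demanded by the $\ell^\infty$-product description of $\Hilb(\cC)(H \otimes U, H)$; this uniform control must be extracted from the intrinsic C$^*$-norm of $\cM_A$ rather than estimated fibre by fibre. For faithfulness it suffices, by the C$^*$-identity $\norm{f}^2 = \norm{f^* f}$, to show that $\pi$ is isometric on each endomorphism C$^*$-algebra $\cM_A(m \stmryolt U, m \stmryolt U) \cong A(U \otimes \bar U)$; here I would exploit the rigidity of $\cC$, which furnishes, through the standard solutions $R_U, \bar R_U$ of the conjugate equations, a faithful conditional-expectation-type map $A(U \otimes \bar U) \to B$ onto the corner $B$. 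Faithfulness of $\rho$ on $B$ then propagates through this expectation to give faithfulness of the interior-tensor-product representation on all of $A(U \otimes \bar U)$, and hence isometry of $\pi$. Packaging the resulting faithful $*$-homomorphism of algebra objects back through Theorem~\ref{equivalgmod} yields the claimed faithful representation into $\cB(H)$.
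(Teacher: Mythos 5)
Your proposal is correct, but note first that the paper itself contains no proof of this statement: it is imported verbatim from \cite{jp1}*{Theorem 4}, so the comparison is with Jones--Penneys' argument. Their proof is a categorical GNS construction: each state on $A(1_\cC)$ yields, via the $A(1_\cC)$-valued inner products furnished by rigidity, a Hilbert space object on which $A$ acts by left multiplication, and the faithful representation is the direct sum over all states. Your route --- fix a single faithful representation $\rho \colon A(1_\cC) \to \cB(K)$ and localize the Hilbert $A(1_\cC)$-modules $\cM_A(m \stmryolt U, m)$ by interior tensor product --- is the same mechanism in different packaging: taking $\rho$ to be the universal representation of $A(1_\cC)$ recovers the direct sum of GNS spaces, and the analytic inputs coincide (your inner product is exactly \eqref{eq:inn-prod-for-star-alg-obj} from the appendix, and the uniform fibrewise bound you single out as the main obstacle is precisely the content of Proposition \ref{prop:continuity-pre-C-star-alg-obj}, i.e., contractivity of composition and of the $\cC$-action in the C$^*$-category $\cM_A$). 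What your version buys is economy: one construction instead of a state-indexed family, with nondegeneracy of the inner product automatic from the C$^*$-identity, since $\linnprod{A_0}{a}{a}$ corresponds to the composition $a a^* \in \cM_A(m)$. For the same reason your faithfulness step can be shortened: since the paper's notion of faithful representation asks only for injectivity of each fibre map $A(U) \to \cB(H)(U)$, you can evaluate $\pi_U(a)$ on the canonical vector $(1_{A_0} \otimes \xi) \otimes \id_U$ in the summand of $(H \otimes U)(U)$ given by \eqref{eq:mon-prod-vec-obj-decomp}; this returns $a \otimes \xi \in A(U) \otimes_{A(1_\cC)} K$, whose norm is $(\xi, \rho(a a^*) \xi)^{1/2}$, so faithfulness of $\rho$ gives injectivity directly, with no need for the detour through the endomorphism algebras $A(U \otimes \bar U)$ and conditional expectations. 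That detour is nonetheless sound: the partial trace $A(U \otimes \bar U) \to A(1_\cC)$ is a faithful conditional expectation by the conjugate equations, so your propagation argument also closes the gap you flagged.
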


Here, a \emph{faithful representation} means a *-algebra morphism $A \to \cB(H)$ where the associated functor of module categories $\cM_A \to \cM_{\cB(H)}$ is faithful, or equivalently, the corresponding natural transformation $A(U) \to \cB(H)(U)$ is given by injective maps.

\begin{remark}
When $\cC$ is $\Rep(G)$ for some compact quantum group $G$, the above theorem reduces to the fact that any $G$-C$^*$-algebra can be equivariantly embedded into one of the form $\cR(\cB(H))$ for some infinite dimensional unitary representation $(H, U)$ of $G$.
\end{remark}

Recall that if $A$ and $B$ are C$^*$-algebra objects in $\Vect(\cC)$, then the spaces $A(X\bar{X})$ and $B(X \bar{X})$ are C$^*$-algebras for all $X \in \cC$.

\begin{definition}[\cite{jp1}]\label{cpmaps}
Let $A$ and $B$ be $C^*$-algebra objects in $\Vect(\cC)$.
A \emph{completely positive map} (or a \emph{cp map}) from $A$ to $B$ is a natural transformation of contravariant C$^*$-functors $\theta\colon A \to B$ for which the induced maps $\theta_{X \bar{X}}\colon A(X \bar{X}) \to B(X \bar{X})$, with $X \in \cC$, are positive.
If they are also unital, $\theta$ is called a \emph{unital completely positive map}, or a \emph{ucp map}.
\end{definition}

\begin{definition}[\cite{jp1}]\label{multiplier}
Let $(\cM, m)$ and $(\cM', m')$ be pointed C$^*$-$\cC$-module categories.
A \emph{multiplier} $\Theta\colon (\cM,m) \to (\cM',m')$ is a collection of linear maps
\[
  \Theta_{X,Y} \colon \cM(m \stmryolt X, m \stmryolt Y) \to \cM'(m' \stmryolt X, m' \stmryolt Y)
\]
for $X, Y \in \cC$ satisfying
\[
  \Theta_{X \otimes U, Z \otimes U} ( ((\id_m \stmryolt \phi) f (\id_m \stmryolt \psi)) \stmryolt \id_U) = ((\id_{m'} \stmryolt \phi) \Theta_{Y,W}(f) (\id_{m'} \stmryolt \psi)) \stmryolt \id_U
\]
for all $U, X, Y, W, Z \in \cC$, $\psi \in \cC(X,Y)$, $\phi \in \cC(W,Z)$, and $f \in \cM(m \stmryolt Y, m \stmryolt W)$.
A multiplier $\Theta$ for which $\Theta_{X,X}$ is positive for all $X$ is called a \emph{cp multiplier}.
It is called \emph{ucp multiplier} if $\Theta_{X,X}$ is ucp for $X$.
\end{definition}

\begin{proposition}[\cite{jp1}*{Proposition 7 and Corollary 5}]\label{prop:jp1-corr-betw-multip-nat-trans}
Given two $C^*$-algebra objects $A,B$ in $\Vect(\cC)$, the space of natural transformations between the contravariant functors $A$ and $B$ is in bijection with the space of multipliers from $\cM_A$ to $\cM_B$.
Under this bijection, the cp maps correspond to the cp multipliers, as the ucp maps correspond to the ucp multipliers.
\end{proposition}

Concretely, given a natural transformation $\theta\colon A \to B$, the corresponding multiplier is given by
\begin{equation}\label{eq:map-to-multiplier}
  \Theta_{V, W}(T) = \sum_{U, \alpha} ((\theta_U(\bar R_W^* (T \otimes \id_{\bar W}) v_\alpha) v_\alpha^* )\otimes \id_W) (\id_V \otimes R_W)
\end{equation}
where $U$ runs over $\Irr(\cC)$ and $(v_\alpha)_\alpha$ is an orthonormal basis of $\cC(U, V \otimes \bar W)$.

\subsection{Quantum groups}\label{sec:qg-prelim}

We follow the convention of~\cite{YDpaper} about quantum groups and their actions.
Let us briefly go over the notations.

Given a compact quantum group $G$, we denote the algebra of regular functions (matrix coefficients of finite dimensional unitary representations) by $\cO(G)$, which is a $*$-Hopf algebra.
We denote the reduced C$^*$-algebra of $G$ by $C(G)$, and its von Neumann algebraic closure with respect to the Haar state by $L^\infty(G)$.

A \emph{continuous action} of $G$ on a C$^*$-algebra $A$ is given by a nondegenerate and injective $*$-homomorphism $\alpha\colon A \to C(G) \otimes A$ satisfying $(\Delta \otimes \id) \alpha = (\id \otimes \alpha) \alpha$.
We say that $A$ is a \emph{$G$-C$^*$-algebra}.

Given a $G$-C$^*$-algebra $(A, \alpha)$, its \emph{regular subalgebra} $\cA$ is defined as the set of elements $a \in A$ such that $\alpha(a)$ belongs to the algebraic tensor product $\cO(G) \otimes A$.
This is a left $\cO(G)$-comodule algebra.

A measurable model, \emph{$G$-von Neumann algebra}, is given by a von Neumann algebra $M$ and a unital injective normal $*$-homomorphism $\alpha \colon M \to L^\infty(G) \vnotimes M$ satisfying the coassociativity condition $(\Delta \vnotimes \id) \alpha = (\id \vnotimes \alpha)\alpha$.
The regular subalgebra $\cM \subset M$ makes sense as above, and we denote its C$^*$-algebraic closure by $\cR(M)$.
Then $\cR(M)$ admits a continuous action of $G$.

\begin{example}\label{ex:adj-action}
Let $(H, U)$ be a finite dimensional unitary representation of $G$.
Under our convention, $H$ is a right $\cO(G)$-comodule, but it can be considered as a left comodule by the coaction map $\xi \mapsto U_{21}^* (1 \otimes \xi)$.
This becomes an equivariant right Hilbert $\C$-module by the inner product
\[
  \langle \xi, \eta \rangle_\C = (U_1 \eta, U_{1'} \xi)_H h(U_{2'}^* U_2) = (\rho_U^{-1} \eta, \xi)_H
\]
where $( \cdot, \cdot)_H$ denotes the original inner product on $H$, see~\cite{MR3933035}.
Then $\cB(H)$ admits the induced $G$-C$^*$-algebra structure, concretely given by the coaction $T \mapsto U^*_{2 1} T_2 U_{2 1}$.
Analogously, when $(H, U)$ is an infinite dimensional unitary representation, the same formula makes $\cB(H)$ a $G$-von Neumann algebra and $\cR(\cB(H))$ a $G$-C$^*$-algebra.
\end{example}

The finitely supported functions on the discrete dual $\hat{G}$, which is the direct sum of matrix algebras for the irreducible unitary representations of $G$, is denoted by $c_c(\hat{G})$.
Our convention of coproduct on this multiplier Hopf algebra satisfies the equalities
\begin{align}\label{eq:dual-hopf-alg-pairing}
  (\phi_{(1)}, f_1) (\phi_{(2)}, f_2) & = (\phi, f_1 f_2), & (\phi_1 \phi_2, f) & = (\phi_1, f_{(1)}) (\phi_2, f_{(2)}).
\end{align}
for the natural pairing between $\cO(G)$ and $c_c(\hat{G})$.

We denote the \emph{Drinfeld double} of $G$ by $D(G)$.
Its convolution algebra of functions $\cO_c(\hat D(G))$, is the tensor product coalgebra $c_c(\hat G) \otimes \cO(G)$, with the $*$-algebra structure induced by those of $\cO(G)$ and $c_c(\hat G)$ together with the exchange rule
\[
  (a_{(1)} \rhd \omega) a_{(2)} = a_{(1)} (\omega \lhd a_{(2)}) \qquad (\omega \in c_c(\hat G), a \in \cO(G))
\]
to make sense of $(\omega a)^* = a^* \omega^*$ and $(\omega_1 a_1) (\omega_2 a_2)$ inside the tensor product.

A \emph{Yetter--Drinfeld $G$-C$^*$-algebra} represents an action of $D(G)$ on a C$^*$-algebra.
Concretely, we take a $G$-C$^*$-algebra $(A, \alpha)$ together with a left action
\[
  \cO(G) \otimes \cA \to \cA, \quad f \otimes a \mapsto f \rhd a
\]
such that
\[
  \alpha(f \rhd a) = f_{(1)} a_{(1)} S(f_{(3)}) \otimes (f_{(2)} \rhd a_{(2)}).
\]
Maps between such algebras, compatible with the coactions and the actions of $\cO(G)$, are called \emph{$D(G)$-equivariant} or \emph{Yetter--Drinfeld $G$-equivariant} maps.

Let $A$ be a left $\cO(G)$-module algebra.
By duality, we have a homomorphism
\[
  \beta_A\colon A \to \smashoperator[l]{\prod_{i \in \Irr(G)}} A \otimes \cB(H_i), \quad a \mapsto \biggl(\sum_{k,l} (u^{(i)}_{kl} \triangleright a) \otimes m^{(i)}_{kl}\biggr)_i,
\]
which can be regarded as a right comodule algebra over the multiplier Hopf algebra representing $\hat G$ with the convention of~\eqref{eq:dual-hopf-alg-pairing}.

This construction makes sense as a unitary coaction of $\ell^\infty(\hat G)$ when $A$ is a Yetter--Drinfeld $G$-C$^*$-algebra.
Moverover, when we consider an action of $\hat G$ on a von Neumann algebra $N$, the target of coaction map becomes
\[
  \ell^\infty \mhyph\smashoperator[l]{\prod_{i \in \Irr(G)}} N \otimes \cB(H_i) \simeq N \vnotimes \ell^\infty(\hat{G}).
\]

\medskip
By an analogue of the Tannaka--Krein--Woronowicz duality, the category of unital $G$-C$^*$-algebras is equivalent to the category of right C$^*$-$(\Rep(G))$-module categories~\citelist{\cite{ydk1}\cite{n1}}.

Concretely, given a unital $G$-C$^*$-algebra $B$, one takes the category $\cD_B$ of finitely generated projective $G$-equivariant right Hilbert modules over $B$.
Thus, an object of $\cD_B$ is a right Hilbert module $E_B$ with a left coaction  $\delta$ of $C(G)$, such that the action of $B$ is equivariant.
Given an object $U$ of $\Rep(G)$, its right action $E_B \stmryolt U$ is represented by the equivariant right Hilbert module $H_U \otimes E_B$, where the underlying left comodule is given as the tensor product of $E_B$ and the left comodule $H_U$ as explained in Example~\ref{ex:adj-action}.
Explicitly, the new coaction on $E_B \stmryolt U$ is given by
\[
  \xi \otimes a \mapsto \left( U_{21}^*(1 \otimes \xi \otimes 1)\right)\delta(a)_{13}.
\]

Conversely, given a right C$^*$-$(\Rep(G))$-module category $\cD$ and an object $X \in \cD$, we take the left $\cO(G)$-comodule
\[
  \cB_{\cD, X} = \bigoplus_{i \in \Irr(G)} \bar H_i \otimes \cD(X, X \stmryolt U_i),
\]
which admits an associative product from irreducible decomposition of monoidal products.
Together with the involution coming from duality of representations, we obtain a pre C$^*$-algebra which admits a canonical completion supporting a coaction of $C(G)$.

\begin{remark}\label{rem:leftcoactionsandrightmodules}
The formula above explains why left $C(G)$-comodule structures give rise to right $\Rep(G)$-module categories.
Indeed, given two finite dimensional unitary representations $(H_U, U)$ and $(H_V,V)$ of $G$, and given $\xi \in H_U$, $\eta \in H_V$ and $a \in E_B$, the $C(G)$-coaction on $\xi \otimes \eta \otimes a \in (H_V \otimes E_B) \stmryolt U$ is given by
\[
  \xi \otimes \eta \otimes a \mapsto U_{21}^* V_{31}^* (a_{(1)} \otimes \xi \otimes \eta \otimes a_{(2)}),
\]
where $a_{(1)} \otimes a_{(2)} = \delta(a)$.
Flipping the second and third legs, we obtain
\[
  U_{21}^* V_{31}^* ( a_{(1)} \otimes \xi \otimes \eta \otimes a_{(2)}) \mapsto (V \otimes U)_{21}^* (a_{(1)} \otimes (\eta \otimes \xi) \otimes a_{(2)}).
\]
This computation shows that the flip map $H_U \otimes H_V \to H_V \otimes H_U$ induces an equivariant isomorphism $(H_V \otimes E_B) \stmryolt U \simeq H_{V \otimes U} \otimes E_B$.
\end{remark}

The generalization of module categories to the nonunital setting is given by multiplier module categories~\cite{AV1}.

\section{Injective envelopes for \texorpdfstring{$D(G)$-C$^*$}{D(G)-C*}-algebras}\label{sec:inj-env-YD-G-algs}

In this section we are going to establish the existence of injective envelopes for unital Yetter--Drinfeld $G$-C$^*$-algebras, closely following~\cite{HHN1}*{Section 2}.

Take a unitary representation $(H, U)$ of $G$ on $H$, and as before consider the coaction
\[
  \alpha_H(T) = U_{21}^* (1 \otimes T) U_{21}
\]
on $\cB(H)$.
Then $\cB(H) \vnotimes \ell^\infty (\hat{G})$ becomes a $G$-von Neumann algebra with coaction
\begin{equation*}
  \beta\colon \cB(H) \vnotimes \ell^\infty (\hat{G}) \to L^\infty(G) \vnotimes \cB(H) \vnotimes \ell^\infty (\hat{G}), \quad x \mapsto W_{31}^* (\alpha_H \otimes \id ) (x) W_{31},
\end{equation*}
where $W = W_G$ is the multiplicative unitary.
Together with the $\hat G$-von Neumann algebra structure given by $\id \otimes \hat{\Delta}$, we get a Yetter--Drinfeld von Neumann $G$-algebra structure on $\cB(H) \vnotimes \ell^\infty (\hat{G})$.

\begin{proposition}[~\cite{HHN1}, Section 2]
Under the above setting, the $G$-C$^*$-algebra $\cR(\cB(H) \otimes \ell^\infty (\hat{G}))$ admits a structure of Yetter--Drinfeld $G$-C$^*$-algebra.
\end{proposition}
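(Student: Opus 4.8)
The plan is to obtain the Yetter--Drinfeld structure on $\cR(M)$, where $M = \cB(H) \vnotimes \ell^\infty(\hat G)$, simply by restricting the Yetter--Drinfeld von Neumann structure already constructed on $M$ to the regular subalgebra and passing to its C$^*$-closure. By the general discussion of $G$-von Neumann algebras in Section \ref{sec:quant-grp-actions}, the coaction $\beta$ restricts to the regular subalgebra $\cM \subset M$ and makes $\cR(M)$ a $G$-C$^*$-algebra with coaction $\alpha = \beta|_{\cR(M)}$. Consequently the only thing that needs to be supplied is a left $\cO(G)$-action $\rhd$ on the regular subalgebra of $\cR(M)$ satisfying the compatibility $\alpha(f \rhd a) = f_{(1)} a_{(1)} S(f_{(3)}) \otimes (f_{(2)} \rhd a_{(2)})$.

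This action is the one dual to the $\hat G$-coaction $\id \otimes \hat\Delta$: under the pairing between $\cO(G)$ and $c_c(\hat G)$, that coaction produces, for each $f \in \cO(G)$, a map $a \mapsto f \rhd a$ on $M$, and these assemble into the left $\cO(G)$-module-algebra structure underlying the von Neumann Yetter--Drinfeld structure. The heart of the argument is to check that this action preserves the $G$-regular subalgebra $\cM$, so that it restricts to the core of $\cR(M)$. For this, suppose $a \in \cM$, so that $\beta(a) = a_{(1)} \otimes a_{(2)}$ lies in the algebraic tensor product $\cO(G) \otimes M$. Applying the Yetter--Drinfeld compatibility relation at the von Neumann level,
\[
\beta(f \rhd a) = f_{(1)} a_{(1)} S(f_{(3)}) \otimes (f_{(2)} \rhd a_{(2)}),
\]
and observing that the first leg $f_{(1)} a_{(1)} S(f_{(3)})$ lies in $\cO(G)$, since $\cO(G)$ is closed under multiplication and under the antipode $S$ and the sum is finite, while the second leg lies in $M$, we conclude that $\beta(f \rhd a) \in \cO(G) \otimes M$, i.e.\ $f \rhd a \in \cM$. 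Hence $\rhd$ restricts to an action on $\cM$, which is dense in $\cR(M)$ and coincides with the regular subalgebra of the $G$-C$^*$-algebra $\cR(M)$.

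It then remains to note that all the algebraic identities defining the Yetter--Drinfeld structure --- that $\rhd$ is a left $\cO(G)$-action, that each $f \rhd (\cdot)$ is compatible with the product and the $*$-operation, and the displayed compatibility with $\beta$ --- already hold on $M$ and in particular on the subalgebra $\cM$, so they persist on the regular subalgebra of $\cR(M)$. This yields the desired Yetter--Drinfeld $G$-C$^*$-algebra structure. The main obstacle is precisely the invariance step of the second paragraph: one must know that the $\hat G$-action does not leave the $G$-spectral subalgebra, and this is exactly what the von Neumann Yetter--Drinfeld compatibility buys us, since without it the dual action could a priori produce elements that are no longer $G$-finite and hence fall outside $\cR(M)$.
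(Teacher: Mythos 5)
Your proof is correct and takes essentially the same route as the paper, which itself gives no in-text argument but defers to Section 2 of \cite{HHN1}, where the Yetter--Drinfeld C$^*$-structure on $\cR(\cB(H) \vnotimes \ell^\infty(\hat{G}))$ is obtained exactly as you do: by restricting the von Neumann-level Yetter--Drinfeld structure to the $G$-regular subalgebra and passing to the C$^*$-closure. You also correctly isolate the one nontrivial point --- that the dual $\cO(G)$-action preserves the regular subalgebra $\cM$, since for $a \in \cM$ the compatibility relation exhibits $\beta(f \rhd a)$ as a finite sum in $\cO(G) \otimes M$ --- which is precisely where the von Neumann Yetter--Drinfeld compatibility is used.
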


Now let us define injective envelopes for unital Yetter--Drinfeld $G$-C$^*$-algebras, in the quite standard way.
In this section, by equivariance we always mean equivariance with respect to Yetter--Drinfeld structures.

\begin{definition}
A unital Yetter--Drinfeld $G$-C$^*$-algebra $A$ is said to be \emph{injective} if for any equivariant complete isometric map $\phi \colon B \to C$  of unital Yetter--Drinfeld $G$-C$^*$-algebras and any equivariant ucp map $\psi\colon B \to A$, there is an equivariant ucp map $\tilde\psi\colon C \to A$ that extends $\psi$ along $\phi$, i.e., $\tilde{\psi} \circ \phi = \psi$.
\end{definition}

\begin{definition}
An \emph{injective envelope} of a unital Yetter--Drinfeld $G$-C$^*$-algebra $A$ is given by an injective Yetter--Drinfeld $G$-C$^*$-algebra $\cI$ and an equivariant complete isometry $\phi \colon A \to \cI$ which is \emph{essential}, i.e., for any equivariant ucp map $\psi \colon \cI \to B$, $\psi$ is completely isometric if and only if $\psi \phi$ is.
\end{definition}

The first step in proving the existence of injective envelopes is to show the existence of enough injective objects.
This is accomplished by the following proposition.

\begin{proposition}\label{prop:R-BH-linftyhatG-inj}
Let $(H, U)$ be a unitary representation of $G$.
The Yetter--Drinfeld $G$-C$^*$-algebra $\cR(\cB(H) \vnotimes \ell^\infty (\hat{G}))$ is injective.
\end{proposition}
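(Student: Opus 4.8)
The plan is to exhibit $A=\cR(\cB(H)\vnotimes\ell^\infty(\hat G))$ as a co-induced object and to reduce the equivariant extension problem for $A$ to $G$-injectivity of $\cB(H)$ alone. Write $M=\cB(H)\vnotimes\ell^\infty(\hat G)$, and let $\hat\epsilon$ denote the counit of $\hat G$, regarded as the $*$-homomorphism $\ell^\infty(\hat G)\to\C$ picking out the block at the trivial representation. Two maps organize the reduction: the slice $E=\id\otimes\hat\epsilon\colon M\to\cB(H)$, and, for any Yetter--Drinfeld algebra $C$, its $\hat G$-coaction $\gamma_C\colon C\to C\vnotimes\ell^\infty(\hat G)$. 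First I would check that $E$ is a $G$-equivariant ucp map: since $\hat\epsilon$ is a $*$-homomorphism and $(\hat\epsilon\otimes\id)(W)=1$, applying $\id\otimes\hat\epsilon$ to $\beta(x)=W_{31}^*(\alpha_H\otimes\id)(x)W_{31}$ annihilates the conjugating unitaries and yields $(\id\otimes E)\beta=\alpha_H E$. Dually, co-induction $\psi_0\mapsto(\psi_0\otimes\id)\gamma_C$ should send a $G$-equivariant ucp map $C\to\cB(H)$ to a $D(G)$-equivariant ucp map $C\to M$, inverse to $\psi\mapsto E\psi$; the counit identities $(\id\otimes\hat\epsilon)\gamma_C=\id$ and $(\hat\epsilon\otimes\id)\hat\Delta=\id$ give the two round-trips, and $\hat G$-equivariance of the co-induced map follows formally from coassociativity of $\gamma_C$.

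Granting this adjunction, the extension argument is short. Given an equivariant complete isometry $\phi\colon B\to C$ and an equivariant ucp map $\psi\colon B\to A\subseteq M$, set $\psi_0=E\psi\colon B\to\cB(H)$, which is $G$-equivariant ucp. Using $G$-injectivity of $\cB(H)$ (below) and the fact that $\phi$ is in particular a $G$-equivariant complete isometry, extend $\psi_0$ to a $G$-equivariant ucp map $\tilde\psi_0\colon C\to\cB(H)$ with $\tilde\psi_0\phi=\psi_0$. Then put $\tilde\psi=(\tilde\psi_0\otimes\id)\gamma_C\colon C\to M$, which is $D(G)$-equivariant ucp by co-induction, and compute
\[
\tilde\psi\phi=(\tilde\psi_0\otimes\id)\gamma_C\phi=(\tilde\psi_0\otimes\id)(\phi\otimes\id)\gamma_B=(\psi_0\otimes\id)\gamma_B=\psi,
\]
where the second equality uses $\hat G$-equivariance of $\phi$ and the last uses the round-trip identity applied to the $\hat G$-equivariant map $\psi$.

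For the remaining ingredient, $\cB(H)$ is $G$-injective because the coaction $\alpha_H=\Ad(U_{21})$ is implemented by the unitary representation $U$. Concretely, given a $G$-equivariant complete isometry $\iota\colon B'\to C'$ and a $G$-equivariant ucp map $\sigma\colon B'\to\cB(H)$, Arveson's theorem produces a (non-equivariant) ucp extension $\Phi\colon C'\to\cB(H)$ because $\cB(H)$ is injective as a C$^*$-algebra. Averaging $\Phi$ over the Haar state $h$, using the implementing unitary to twist, projects onto the $G$-equivariant ucp maps; since averaging is natural and $\sigma$ is already equivariant, the averaged map still restricts to $\sigma$ along $\iota$, giving the desired equivariant extension.

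The main obstacle will be the $G$-equivariance of the co-induced map $\tilde\psi=(\tilde\psi_0\otimes\id)\gamma_C$: whereas $\hat G$-equivariance is automatic from coassociativity, $G$-equivariance genuinely mixes the two coactions on $C$ and must be extracted from the Yetter--Drinfeld compatibility condition together with the conjugation by $W_{31}$ in the definition of $\beta$ and the pentagon relation for $W$. The other point to nail down, though routine, is that the construction lands in $A=\cR(M)$ rather than merely in $M$; this follows from the standard fact that equivariant ucp maps preserve spectral subspaces, so $\tilde\psi$ carries the dense regular subalgebra of $C$ into $\cR(M)$ and then extends by continuity to all of $C$.
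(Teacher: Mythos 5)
Your proposal is correct and follows essentially the same route as the paper: the paper's proof simply cites \cite{HHN1}*{Proposition 2.5} for precisely your co-induction/counit-slice bijection between $G$-equivariant cb maps into $\cB(H)$ and $D(G)$-equivariant cb maps into $\cB(H) \vnotimes \ell^\infty(\hat{G})$ (with image automatically in the regular part), and \cite{HHN1}*{Lemma 2.10} for the Haar-state averaging that upgrades injectivity of $\cB(H)$ to $G$-injectivity, after which the extension argument is the one you give. The single computation you defer --- $G$-equivariance of the co-induced map $(\tilde\psi_0 \otimes \id)\gamma_C$ via the Yetter--Drinfeld condition and the pentagon relation for $W$ --- is exactly the content of the cited proposition, so your plan unpacks, rather than deviates from, the paper's argument.
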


\begin{proof}
The proof is essentially the same as~\cite{HHN1}*{Corollary 2.6}.
By~\cite{HHN1}*{Proposition 2.5}, for any Yetter--Drinfeld $G$-C$^*$-algebra $A$ there is a bijective correspondence between the $G$-equivariant completely bounded maps $\phi\colon A \to \cB(H)$ and $D(G)$-equivariant cb maps $P\colon A \to \cB(H) \vnotimes \ell^\infty(\hat G)$.
Moreover, the image of such map $P$ would be in $\cR(\cB(H) \vnotimes \ell^\infty(\hat G))$.

By the averaging argument (see~\cite{HHN1}, Lemma 2.10), the injectivity of $\cB(H)$ as an operator system implies its injectivity as a $G$-operator system.
This implies the injectivity of $\cR(\cB(H) \vnotimes \ell^\infty(\hat G))$ as a Yetter--Drinfeld $G$-C$^*$-algebra.
\end{proof}

Let $A$ be a Yetter--Drinfeld $G$-C$^*$ algebra, and let us take a faithful representation $\pi\colon A \to \cB(H)$ together with a covariant unitary representation $U$ of $G$ on $H$.
For example, we may start from a faithful nondegenerate representation $\pi_0 \colon A \to B(H_0)$, and take
\begin{align*}
  H & = L^2(G) \otimes H_0, & \pi & = (\lambda \otimes \pi_0) \alpha, & U & = W_{1 3} \in \cM(\cK(L^2(G) \otimes H_0) \otimes C(G)).
\end{align*}

\begin{definition}
Under the above setting, the \emph{Poisson integral} of $\pi$ is defined as the $D(G)$-equivariant embedding map
\[
  P_{\pi}\colon A \to \cB(H) \vnotimes \ell^\infty (\hat{G}), \quad a \mapsto (\pi \otimes \id) \beta_A(a).
\]

\end{definition}

\begin{theorem}\label{thm:inj-env-YD-G-alg}
Every unital Yetter--Drinfeld $G$-C$^*$-algebra $A$ admits an injective envelope.
\end{theorem}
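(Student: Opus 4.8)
The plan is to follow Hamana's two-step scheme, adapted to the equivariant setting. Write $M = \cB(H) \vnotimes \ell^\infty(\hat{G})$ for the $G$-von Neumann algebra attached to a faithful covariant representation $\pi$ of $A$, so that the Poisson integral $P_\pi\colon A \to \cR(M)$ is a $D(G)$-equivariant $*$-homomorphism. Since $\pi$ is faithful and $\beta_A$ is injective, $P_\pi = (\pi \otimes \id)\beta_A$ is injective, hence isometric and completely isometric; and by Proposition \ref{prop:R-BH-linftyhatG-inj} the target $\cR(M)$ is an injective Yetter--Drinfeld $G$-C$^*$-algebra. Thus $A$ is realized as an equivariant operator subsystem of an injective object, which completes the first step.

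For the second step I would run the minimal-idempotent argument of Proposition \ref{minimalidemp} directly on the regular subalgebra $X = \cR(M)$, rather than on $M$; this is the crucial choice that keeps the action continuous. Consider the set $S$ of $D(G)$-equivariant ucp maps $\phi\colon \cR(M) \to \cR(M)$ with $\phi \circ P_\pi = P_\pi$. This is a nonempty convex semigroup of contractive maps. Viewing $\cR(M)$ as a subspace of the dual Banach space $M = (M_*)^*$ and each $\phi$ as a map $\cR(M) \to M$, I would check that $S$ is closed in the topology of pointwise weak$^*$ convergence: unitality, complete positivity and the condition $\phi P_\pi = P_\pi$ manifestly pass to pointwise weak$^*$ limits, while equivariance passes to the limit because the coaction is normal and $\beta(a)$ is a finite algebraic sum for $a \in \cR(M)$. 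The point is that once the limit $\phi$ is equivariant it automatically preserves spectral subspaces, hence maps $\cR(M)$ back into $\cR(M)$, so $\phi \in S$. Proposition \ref{minimalidemp} then yields a minimal idempotent $\phi_0 \in S$ with $\phi_0 \psi \phi_0 = \phi_0$ for all $\psi \in S$.

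I would then set $\cI = \phi_0(\cR(M))$, an injective operator system which, equipped with the Choi--Effros product $a \cdot b = \phi_0(ab)$, becomes a C$^*$-algebra. Because $\phi_0$ is equivariant, the coaction descends to $\cI$, and, crucially, since $\cI \subseteq \cR(M)$ consists of regular elements, the resulting $D(G)$-action is continuous, so $\cI$ is a genuine Yetter--Drinfeld $G$-C$^*$-algebra. As $\phi_0$ fixes $P_\pi(A)$, we have $P_\pi(A) \subseteq \cI$, giving an equivariant complete isometry $A \to \cI$. Injectivity of $\cI$ follows from the standard fact that the range of an equivariant ucp idempotent on the injective object $\cR(M)$ is injective: any equivariant ucp map into $\cI$ extends to $\cR(M)$ and is then compressed back by $\phi_0$.

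It remains to prove essentiality, which I would deduce from rigidity. Minimality of $\phi_0$ forces $A$-rigidity of $\cI$: if $\theta\colon \cI \to \cI$ is equivariant ucp with $\theta P_\pi = P_\pi$, then $\theta\phi_0 \in S$, so $\phi_0\theta\phi_0 = \phi_0$, whence $\phi_0\theta(x) = x$ for $x \in \cI$ and therefore $\theta = \id_\cI$. Given rigidity and injectivity, essentiality is routine: for an equivariant ucp $\psi\colon \cI \to B$ with $\psi P_\pi$ completely isometric, injectivity of $\cI$ provides an equivariant ucp $\rho\colon B \to \cI$ with $\rho\psi P_\pi = P_\pi$, so $\rho\psi = \id_\cI$ by rigidity and $\psi$ is completely isometric. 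The main obstacle, and the substance of the improvement over Hamana, is precisely the continuity of the resulting action; the device of carrying out the fixed-point argument on $\cR(M)$ (using the embedding $\cR(M) \subset M = (M_*)^*$ only to supply the weak$^*$ topology) rather than on $M$ is what guarantees that $\phi_0$ has range inside the regular subalgebra, and the technical heart is verifying weak$^*$-closedness of $S$ together with the compatibility of the Choi--Effros product with the descended coaction.
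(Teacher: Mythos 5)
Your proposal is correct and takes essentially the same route as the paper's proof: the Poisson integral $P_\pi \colon A \to \cR(\cB(H) \vnotimes \ell^\infty(\hat G))$ embedding $A$ into the injective object of Proposition \ref{prop:R-BH-linftyhatG-inj}, a minimal idempotent $\phi_0$ obtained from Proposition \ref{minimalidemp} applied to the semigroup of equivariant ucp maps fixing $P_\pi$, the Choi--Effros product on $\Img(\phi_0)$, and rigidity from minimality yielding essentiality. The only difference is expository: you spell out the weak$^*$-closedness of $S$ and the spectral-subspace argument keeping limit maps inside $\cR(M)$, details the paper asserts without proof.
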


\begin{proof}
Let us fix $(\pi, H)$ as above.
Consider the semigroup $S$ of equivariant ucp maps $f$ from $\cR(\cB(H) \otimes \ell^\infty(\hat{G}))$ to itself such that $f P_\pi = P_\pi$.
This is closed with respect to the topology of pointwise convergence for the weak$^*$-topology.
By Proposition~\ref{minimalidemp}, there is a unique minimal idempotent in $\phi_0 \in S$.

The rest is quite standard, as follows.

Put $\cI(A) = \Img(\phi_0)$, and endow it with the Choi--Effros product coming from $\phi_0$.
By construction, $\cI(A)$ is injective as a Yetter--Drinfeld $G$-C$^*$-algebra, and we have an embedding $A \to \cI(A)$ given by $P_\pi$.
We are going to show that $\cI(A)$ is an injective envelope of $A$.

First, we claim that the only ucp $D(G)$-equivariant map $\psi\colon \cI(A) \to \cI(A)$ restricting to $\id_A$ on $A$ is the identity map.
Otherwise $\psi \phi_0$ would be an element of $S$ that is strictly below $\phi_0$, contradicting the minimality of $\phi_0$.

Now, let us take a completely isometric $D(G)$-equivariant ucp map $\psi\colon\cI(A) \to B$ such that $\psi P_\pi$ is completely isometric.
Without losing generality we may assume that $B$ is injective.
By the injectivity of $\cI(A)$, there is an equivariant ucp map $\eta\colon B \to \cI(A)$.
By the above claim implies that $\eta \psi$ must be the identity, which implies that $\psi$ is completely isometric.
This shows that $\cI(A)$ is an essential extension of $A$.
\end{proof}

\begin{remark}
Theorem~\ref{thm:inj-env-YD-G-alg} would extend to more general locally compact quantum groups once we have an analogue of Proposition~\ref{prop:R-BH-linftyhatG-inj}.
Note that~\cite{MR2985658} already establishes the existence of injective envelopes for the category of operator spaces with von Neumann algebraic coaction of a Hopf--von Neumann algebra $M$, such that the predual Banach algebra $M_*$ has a bounded approximate unit with bound $1$.
For us, the relevant Hopf--von Neumann algebra is $M = L^\infty(D(G))$, so that $M_*$ is a completion of $\cO_c(\hat D(G))$.
Taking an increasing net of finite sets of $\Irr(G)$, we obtain an approximate unit of $M_*$ of the form $(z_i \otimes 1)_i$, where $z_i$ is a central projection of $c_c(\hat G)$.
In this regard Theorem~\ref{thm:inj-env-YD-G-alg} could be regarded as a refinement of~\cite{MR2985658}*{Theorem 2.7} to the category of \emph{continuous} actions of $D(G)$.
\end{remark}

\section{Injectivity for module categories}\label{sec:inj-mod-cat}

As before, let $\cC$ be a rigid C$^*$-tensor category.
Our goal here is to establish that the theory of injectivity work well for $\cC$-module categories, which will form a basis of our later analysis for module categories with additional structures.

Let us fix an object  $U$ of $\cC$.
Let us denote by $\cC_U$ the $C^*$-$\cC$-module category defined as (the idempotent completion of) the category with objects $V \in \cC$, and morphism sets
\[
  \cC_U(V,W) = \cC(U\otimes V,U \otimes W).
\]
In other words, this is simply the category $\cC$ as a right module category over itself, but with shifted base point $U$.

Let us call $H \in \Hilb(\cC)$ \emph{finite-dimensional} when it is isomorphic to the image under the Yoneda embedding of some object $U \in \cC$.
A Hilbert space object is finite dimensional if and only if it has finite dimensional fibers and finite support: $H(V) = 0$ for all but finitely many $V \in \Irr(\cC)$.

As we show in the next proposition, the categories $\cC_U$ can be used as a model of $\cB(H)$ for finite dimensional Hilbert space objects $H$.

\begin{proposition}\label{prop:mod-cat-for-end-alg-obj}
Let $H \in \Hilb(\cC)$ be finite dimensional, corresponding to an object $U \in \cC$.
Then the pointed cyclic $C^*$-$\cC$-module category $\cM_{\cB(H)}$ is isomorphic to $\cC_{U}$.
\end{proposition}

\begin{proof}
Let us write $\tilde V$ for the image of Yoneda embedding of $V \in \cC$, so that we have $H \simeq \tilde U$.
Since $V \mapsto \tilde V$ is a monoidal functor, we have
\[
  (H \otimes \tilde V) (Z) \simeq \cC(Z,U \otimes V).
\]

Now, recall that we have
\begin{equation*}
  \cM_{\cB(H)}(V,W) = \cB(H)(V \otimes \bar{W})  \simeq \ell^\infty\mhyph\smashoperator[l]{\prod_{Z \in \Irr(\cC)}} \cB\left((H \otimes \tilde V)(Z), (H \otimes \tilde W)(Z) \right).
\end{equation*}
We thus have
\[
  \cM_{\cB(H)}(V,W) \simeq \ell^\infty\mhyph\smashoperator[l]{\prod_{Z \in \Irr(\cC)}} \cB(\cC(Z,U \otimes V), \cC(Z,U \otimes W)).
\]
From this we see that an element $\theta \in \cM_{\cB(H)}(V,W)$ is the same thing as a bounded family of linear operators
\[
  \theta_Z\colon \cC(Z, U \otimes V) \to \cC(Z, U \otimes W) \quad (Z \in \Irr(\cC)).
\]
This is precisely a natural transformation between the Yoneda embeddings of $U \otimes V$ and $U \otimes W$, hence given by a morphism $U \otimes V \to U \otimes W$.
We thus obtained an isomorphism $\cM_{\cB(H)}(V,W) \simeq \cC_U(V, W)$.
\end{proof}

\begin{definition}\label{def:mod-cat-inj}
A pointed C$^*$-$\cC$-module category $(\cM, m)$ is said to be \emph{injective} when, for any ucp-multiplier  $\Phi \colon (\cN, n) \to (\cM, m)$ and another ucp-multiplier $\Psi\colon (\cN, n) \to (\cN', n')$ such that the maps
\[
  \Psi_{V, W} \colon \cN(n \stmryolt V, n \stmryolt W) \to \cN'(n' \stmryolt V, n' \stmryolt W)
\]
are completely isometric for $V, W \in \cC$, there exists a ucp-multiplier $\tilde\Phi \colon (\cN', n') \to (\cM, m)$ such that $\tilde \Phi \Psi = \Phi$.
\end{definition}

When the generator $m$ is understood from the context we also say $\cM$ is injective.
We will later see that this definition is independent of the choice of $m$ for cyclic module categories.

In the following lemmas, $(\cM, m)$ is a fixed pointed C$^*$-$\cC$-module category.

\begin{lemma}\label{lem:choi-mat-gen}
Let $(\cM', m')$ be another pointed C$^*$-$\cC$-module category.
Then there is a bijective correspondence between the cp-multipliers $P\colon (\cM, m) \to (\cM', m' \stmryolt U)$ and the cp-multipliers $Q\colon (\cM, m \stmryolt \bar U) \to (\cM', m')$.
Under this correspondence, a ucp-multiplier $P$ corresponds to a ucp-multiplier $Q$ satisfying
\begin{equation}\label{eq:index-property}
  Q_{U \otimes V, U \otimes V}(\id_m \stmryolt (R_U R_U^* \otimes \id_V)) = d_U^{-1} \id_{m' \stmryolt U \otimes V}
\end{equation}
for all $V \in \cC$.
\end{lemma}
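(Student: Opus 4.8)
The plan is to realize the correspondence as a categorical Choi-matrix construction, in which one caps and dilates along the duality morphisms $R_U, \bar R_U$ of $U$. Starting from a cp-multiplier $P$, I would define the companion $Q$ by
\[
Q_{X,Y}(f) = d_U^{-1}\,(\id_{m'}\stmryolt(\bar R_U^*\otimes\id_Y))\,P_{\bar U\otimes X,\,\bar U\otimes Y}(f)\,(\id_{m'}\stmryolt(\bar R_U\otimes\id_X)),
\]
using that, after the strict identification $(m\stmryolt\bar U)\stmryolt X = m\stmryolt(\bar U\otimes X)$, an element $f\in\cM((m\stmryolt\bar U)\stmryolt X,(m\stmryolt\bar U)\stmryolt Y)$ is exactly an input for $P_{\bar U\otimes X,\bar U\otimes Y}$, whose output lives in $\cM'(m'\stmryolt(U\otimes\bar U\otimes X),m'\stmryolt(U\otimes\bar U\otimes Y))$ and is brought back into $\cM'(m'\stmryolt X,m'\stmryolt Y)$ by the two outer morphisms. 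Conversely, from a cp-multiplier $Q$ I would define
\[
P_{X,Y}(g) = d_U\,Q_{U\otimes X,\,U\otimes Y}\bigl((\id_m\stmryolt(R_U\otimes\id_Y))\,g\,(\id_m\stmryolt(R_U^*\otimes\id_X))\bigr),
\]
dilating $g\in\cM(m\stmryolt X,m\stmryolt Y)$ into the domain of $Q_{U\otimes X,U\otimes Y}$ before applying $Q$.

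First I would verify that these two assignments land in the space of multipliers, i.e. that the compatibility condition of Definition \ref{multiplier} holds. This is the most laborious step: one substitutes the definitions, slides the fixed duality morphisms $R_U,\bar R_U$ past the module maps $\id_m\stmryolt\phi$ and $\id_{m'}\stmryolt\psi$ using naturality of the (strict) module associativity, and then feeds the result into the compatibility already satisfied by $P$ (respectively $Q$); the trailing $\stmryolt\id$ in the condition is simply carried along, so stability under tensoring is inherited. I expect this bookkeeping---several nested $\stmryolt$'s and repeated use of strictness---to be the main obstacle, even though no single manipulation is deep.

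Next I would show that the two assignments are mutually inverse. Composing them, the inner dilation by $R_U,R_U^*$ meets the outer contraction by $\bar R_U^*,\bar R_U$; after absorbing the outer contractions into $Q$ through the compatibility condition (with its auxiliary tensor factor taken to be $1_\cC$), the two meet along a conjugate-equation zig-zag of the form $(\id_{\bar U}\otimes\bar R_U^*)(R_U\otimes\id_{\bar U})=\id_{\bar U}$, which collapses to the identity on $\bar U\otimes X$ and $\bar U\otimes Y$. Since the standard solutions satisfy the conjugate equations with no scalar and the prefactors multiply to $d_U^{-1}\cdot d_U=1$, one recovers the starting multiplier in both orders.

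Finally I would track positivity and units. For the cp-statement, with $s=\id_{m'}\stmryolt(\bar R_U\otimes\id_X)$ the value $Q_{X,X}(f)=d_U^{-1}\,s^*\,P_{\bar U\otimes X,\bar U\otimes X}(f)\,s$ is a compression of the positive map $P_{\bar U\otimes X,\bar U\otimes X}$, while with $t=\id_m\stmryolt(R_U\otimes\id_X)$ the value $P_{X,X}(g)=d_U\,Q_{U\otimes X,U\otimes X}(t\,g\,t^*)$ first dilates $g$ and then applies the positive $Q_{U\otimes X,U\otimes X}$; both operations preserve positivity, so cp corresponds to cp. For the unital refinement, unitality of $P$ together with $s^*s=\id_{m'}\stmryolt((\bar R_U^*\bar R_U)\otimes\id_X)=d_U\,\id_{m'\stmryolt X}$ gives $Q_{X,X}(\id)=\id$, so $Q$ is ucp. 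To obtain \eqref{eq:index-property}, I would factor $R_U R_U^*\otimes\id_V=(R_U\otimes\id_V)(R_U^*\otimes\id_V)$, pull these two factors out of $P$ using its compatibility and unitality, and observe that the surviving morphism in $\cC$ is a composite of two conjugate-equation zig-zags, each an identity, leaving exactly the scalar $d_U^{-1}$; conversely this same computation shows that \eqref{eq:index-property} is precisely the condition forcing the associated $P$ to be unital, finishing the characterization.
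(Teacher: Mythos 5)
Your proposal is correct and follows essentially the same route as the paper's proof: the identical cap/dilate formulas along $R_U$ and $\bar R_U$, complete positivity via compression and dilation, mutual inverses via the multiplier property plus the conjugate equations, and \eqref{eq:index-property} extracted from unitality exactly as in the paper. Your only deviation is the prefactor $d_U$ (rather than the paper's printed $d_U^{-1}$) in the formula producing $P$ from $Q$, and your normalization is in fact the consistent one: with $d_U^{-1}$ in both directions the composite would come out as $d_U^{-2}P$ and condition \eqref{eq:index-property} would give $P_{V,V}(\id) = d_U^{-2}\,\id$ instead of unitality, so the paper's scalar there is a typo which its own subsequent scalar-free inverse computation silently corrects.
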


\begin{proof}
Let $P \colon (\cM, m) \to (\cM', m' \stmryolt U)$ be a cp-multiplier, given by the maps
\[
  P_{V, W} \colon \cM(m \stmryolt V, m \stmryolt W) \to \cM'(m' \stmryolt U \otimes V, m' \stmryolt U \otimes W).
\]
Then we get a multiplier $Q \colon (\cM, m \stmryolt \bar U) \to (\cM', m')$ by setting
\begin{align*}
  Q_{V, W}\colon \cM(m \stmryolt \bar U \otimes V, m \stmryolt \bar U \otimes W) & \to \cM'(m' \stmryolt V, m' \stmryolt W), \\ T & \mapsto d_U^{-1} (\id_{m'} \stmryolt \bar R_U^* \otimes \id_W) P_{\bar U \otimes V, \bar U \otimes W}(T) (\id_{m'} \stmryolt \bar R_U \otimes \id_V).
\end{align*}
If $V = W$ and $T$ is positive, then $P_{\bar{U} \otimes V, \bar{U} \otimes V}(T)$ is positive by assumption, and since $d_U^{-1/2} \bar{R}_U$ is an isometry, we conclude that $Q_{V,V}(T)$ is positive.
This shows that $Q$ is completely positive.
Moreover, when $P$ is unital, $Q$ is unital by $\bar R_U^* R_U = d_U$, and~\eqref{eq:index-property} follows from $\cC$-modularity.

In the other direction, given a cp-multiplier $Q \colon (\cM, m \stmryolt \bar U) \to (\cM', m')$, we get a multiplier $P\colon (\cM, m) \to (\cM', m' \stmryolt U)$ by
\[
  P_{V, W}(T) = d_U^{-1} Q_{U \otimes V, U \otimes W}((\id_m \stmryolt R_U \otimes \id_W) T (\id_m \stmryolt R_U^* \otimes \id_V)).
\]
This is again completely positive by the complete positivity of $Q$.
Moreover, when $Q$ satisfies~\eqref{eq:index-property}, the unitality of $P$ is a direct consequence of the above definition.

It remains to check that these constructions are inverse to each other.
Let us start from a multiplier $P$ as above, and let $Q$ be the corresponding one.
Let $P'\colon (\cM, m) \to (\cM', m' \stmryolt U)$ denote the multiplier we obtain from $Q$.
We then need to check that $P'$ is equal to $P$.
Expanding the definitions, we have
\begin{multline*}
  P'(T) =\\ (\id_{m'} \stmryolt \bar R_U^* \otimes \id_{U \otimes W}) P_{\bar U \otimes U \otimes V, \bar U \otimes U \otimes W} \bigl( (\id_m \stmryolt R_U \otimes \id_W) T (\id_m \stmryolt R_U^* \otimes \id_V) \bigr) (\id_{m'} \stmryolt \bar R_U \otimes \id_{U \otimes V}).
\end{multline*}
Using
\begin{multline*}
  P_{\bar U \otimes U \otimes V, \bar U \otimes U \otimes W} \bigl( (\id_m \stmryolt R_U \otimes \id_W) T (\id_m \stmryolt R_U^* \otimes \id_V) \bigr) =\\ (\id_{m'} \stmryolt \id_U \otimes R_U \otimes \id_W) P_{V, W}( T ) (\id_{m'} \stmryolt \id_U \otimes R_U^* \otimes \id_V)
\end{multline*}
and the conjugate equations for $U$, we see that $P'(T) = P(T)$ indeed holds.
This can also be seen from the graphical calculus for multipliers established in~\cite{jp1}.
\end{proof}

\begin{remark}
In fact, the condition~\eqref{eq:index-property} for all $V \in \cC$ implies that $Q$ is a unital multiplier.
To see this, given $W \in \cC$, take $V = \bar U \otimes W$ and consider the operation
\[
  T \mapsto (\id_{m'} \stmryolt \bar R_U^* \otimes \id_W) T (\id_{m'} \stmryolt \bar R_U \otimes \id_W)
\]
on both sides of~\eqref{eq:index-property}.
The left hand side gives $Q_{W,W}(\id_{m \stmryolt W})$ by the conjugate equations, while the right hand side gives $\id_{m' \stmryolt W}$ by $\bar R_U^* \bar R_U = d_U$.
\end{remark}

\begin{lemma}[cf.~\cite{HHN1}*{Proposition 4.11}]\label{lem:cp-map-to-C-and-state}
There is a bijective correspondence between the ucp-multipliers $(\cM, m) \to (\cC, 1_\cC)$ and the states on the C$^*$-algebra $\cM(m)$.
\end{lemma}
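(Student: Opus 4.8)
The plan is to reduce the statement, via the dictionary between pointed module categories and algebra objects, to a statement about natural transformations into the unit algebra object, and then to recognize these as functionals on $\cM(m)$. First I would record that $(\cM,m)$ corresponds to the C$^*$-algebra object $A=A_\cM$ with fibers $A(X)=\cM(m\stmryolt X,m)$, so that $A(1_\cC)=\cM(m)$ as a C$^*$-algebra, while $(\cC,1_\cC)$ corresponds to the unit object $1_\cC\in\Alg(\cC)$, whose fibers are $1_\cC(X)=\cC(X,1_\cC)$. By Proposition \ref{prop:jp1-corr-betw-multip-nat-trans}, ucp-multipliers $(\cM,m)\to(\cC,1_\cC)$ are exactly ucp natural transformations $A\to 1_\cC$, so it suffices to match the latter with states on $\cM(m)$. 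This is the categorical counterpart of the fact that \cite{HHN1}*{Proposition 4.11} identifies equivariant ucp maps into the trivial algebra with invariant states.

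Next I would classify the natural transformations $\theta\colon A\to 1_\cC$ in $\Vect(\cC)$. By the presentation $\Vect(\cC)(V,W)\cong\prod_{i}\cL(V(U_i),W(U_i))$, such a $\theta$ is a family of linear maps $\theta_{U_i}\colon A(U_i)\to 1_\cC(U_i)$ with no further constraint; since $1_\cC(U_i)=\cC(U_i,1_\cC)=0$ for every nontrivial irreducible $U_i$ while $1_\cC(1_\cC)=\C$, such a $\theta$ reduces to a single functional $\omega:=\theta_{1_\cC}\colon\cM(m)\to\C$, all other components being zero. The forward map of the asserted bijection sends a ucp-multiplier $\Theta$ to $\omega_\Theta:=\Theta_{1_\cC,1_\cC}\colon\cM(m)\to\cC(1_\cC)=\C$, which is ucp into $\C$, hence a state. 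For the inverse, given a state $\omega$ I would feed the above $\theta$ into \eqref{eq:map-to-multiplier}; as only the term $U=1_\cC$ survives, this yields an explicit multiplier $\Theta^\omega$. Evaluating at $V=W=1_\cC$ (where the basis of $\cC(1_\cC,1_\cC)$ is trivial) gives $\Theta^\omega_{1_\cC,1_\cC}=\omega$, and the uniqueness in Proposition \ref{prop:jp1-corr-betw-multip-nat-trans} shows these two assignments are mutually inverse.

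The main obstacle is to verify that $\Theta^\omega$ is genuinely ucp, i.e.\ that each $\Theta^\omega_{X,X}\colon\cM(m\stmryolt X,m\stmryolt X)\to\cC(X,X)$ is positive and unital, using only positivity and unitality of $\omega$. Here I would first reduce to irreducible objects: applying the multiplier identity of Definition \ref{multiplier} with $U=1_\cC$, $\psi=\zeta\in\cC(U_i,X)$ and $\phi=\zeta^*$ gives $\zeta^*\Theta^\omega_{X,X}(f)\zeta=\Theta^\omega_{U_i,U_i}\big((\id_m\stmryolt\zeta)^*f(\id_m\stmryolt\zeta)\big)$ for $f\in\cM(m\stmryolt X,m\stmryolt X)$. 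Since positivity of an element of $\cC(X,X)$ is detected by the sesquilinear values $\zeta^*(\cdot)\zeta$ as $\zeta$ ranges over $\cC(U_i,X)$ and all $i$ (these values being real and nonnegative already forces hermiticity and positivity), it is enough to treat $X=U_i$ irreducible. For irreducible $U_i$ the formula \eqref{eq:map-to-multiplier} collapses to a single basis vector $v\in\cC(1_\cC,U_i\otimes\bar U_i)$ and expresses $\Theta^\omega_{U_i,U_i}(f)$ as a scalar multiple of $\id_{U_i}$, the scalar being $\omega$ evaluated on a compression of $f$ by the morphism $\id_m\stmryolt v$; since $\omega\ge 0$ and compressions preserve positivity, positivity follows. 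Unitality follows from $\bar R_U^* R_U=d_U$ together with $\omega(\id_m)=1$.

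The delicate points, where I would spend the most care, are the bookkeeping of the rigidity morphisms in the reduction to irreducibles and in identifying the irreducible-fiber value of \eqref{eq:map-to-multiplier} as a bona fide compression of $f$; once these wire-bending identities are in place, positivity is immediate from $\omega\ge 0$. The unitality and the easy direction (restricting a ucp-multiplier to the unit fiber) are then routine.
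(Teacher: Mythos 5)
Your proof is correct, but it takes a genuinely different route from the paper's. The paper never leaves the multiplier picture: given $P$, it forms the states $\omega_V = \tr_V \circ P_{V,V}$ on $\cM(m \stmryolt V)$, observes via sphericity that $\cC(V) \subset \cM(m \stmryolt V)$ lies in the centralizer of $\omega_V$, and identifies $P_{V,V}$ with the unique $\omega_V$-preserving conditional expectation onto $\cC(V)$; conversely, from a state $\omega$ it builds $\omega_V$ by the partial-trace conditional expectation $\cM(m \stmryolt V) \to \cM(m)$ and defines $P_{V,V}$ as the conditional expectation onto $\cC(V)$, so positivity comes for free from the construction. You instead pass through Proposition \ref{prop:jp1-corr-betw-multip-nat-trans}: since $1_\cC(U_i) = \cC(U_i, 1_\cC) = 0$ for nontrivial irreducibles, natural transformations $A_\cM \to 1_\cC$ are exactly functionals on $A_\cM(1_\cC) = \cM(m)$, which makes the bijection --- and in particular the uniqueness, which the paper has to extract from uniqueness of conditional expectations --- essentially formal, at the price of a hands-on positivity check. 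That check is sound: your reduction identity is Definition \ref{multiplier} with $U = 1_\cC$, $\psi = \zeta$, $\phi = \zeta^*$; quadratic-form detection of positivity in the finite-dimensional C$^*$-algebra $\cC(X)$ works by polarization over $\C$, as you note; and at irreducible $U_i$ formula \eqref{eq:map-to-multiplier} indeed collapses, with $v = d_{U_i}^{-1/2}\bar R_{U_i}$ spanning $\cC(1_\cC, U_i \otimes \bar U_i)$, to $\Theta^\omega_{U_i,U_i}(f) = d_{U_i}^{-1}\,\omega\bigl((\id_m \stmryolt \bar R_{U_i}^*)(f \stmryolt \id_{\bar U_i})(\id_m \stmryolt \bar R_{U_i})\bigr)\,\id_{U_i}$, a genuine compression of $f \stmryolt \id_{\bar U_i}$ --- exactly the paper's formula \eqref{eq:omega-V-from-P-1-1} specialized to irreducibles. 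Two points to tidy rather than gaps: the paper's notion of ucp-multiplier asks that each $\Theta_{X,X}$ be \emph{completely} positive, which your argument does deliver but only implicitly, via $\Mat_n(\cM(m \stmryolt X)) \cong \cM(m \stmryolt X^{\oplus n})$ and the fact that your reduction applies to every object (and unitality at non-irreducible $X$ needs the same polarization-detection step, not just the conjugate equation); and the identity you invoke should read $\bar R_U^* \bar R_U = d_U$, since $\bar R_U^* R_U$ does not typecheck --- a slip the paper itself makes in the proof of Lemma \ref{lem:choi-mat-gen}. On balance, the paper's route buys positivity structurally from conditional expectations at the cost of the sphericity/centralizer argument, while yours buys the bijection structurally from the vanishing fibers of the unit object at the cost of the (manageable) rigidity bookkeeping.
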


\begin{proof}
Suppose we are given a ucp-multiplier $P\colon (\cM, m) \to (\cC, 1_\cC)$.

Given $V \in \cC$, define a state $\omega_V$ on $\cM(m \stmryolt V)$ by $\omega_V(T) = \tr_V(P_{V,V}(T))$.
By the multiplier property, we have
\begin{equation}\label{eq:omega-V-from-P-1-1}
  \omega_V(T) = d_V^{-1} P_{1,1}((\id_m \stmryolt \bar R_V^*) (T \stmryolt \id_{\bar V}) (\id_m \stmryolt \bar R_V)).
\end{equation}

We claim that the state $\omega_1 = P_{1,1}$ determines both $\omega_V$ and $P_{V,V}$, which implies that the correspondence from ucp-multipliers to states is one-to-one.

First,~\eqref{eq:omega-V-from-P-1-1} shows that $\omega_V$ can be written in terms of $\omega_1$.

Observe that $\cC(V)$ embeds into $\cM(m \stmryolt V)$ by $T \mapsto \id_m \stmryolt T$, and the restriction of $\omega_V$ to $\cC(V)$ agrees with $\tr_V$.
Using the spherical structure, we see that $\cC(V)$ is in the centralizer of $\omega_V$, implying the existence of a unique state-preserving conditional expectation
\[
  E_V \colon \cM(m \stmryolt V) \to \cC(V), \quad \tr_V \circ E_V = \omega_V.
\]
By the uniqueness, we see that $P_{V,V} = E_V$, which shows that $E_V$ is indeed determined by $\omega_V$, hence by $\omega_1$.

Conversely, let $\omega\colon \cM(m) \to \C$ be a state.
Composing the conditional expectation
\[
  \cM(m \stmryolt V) \to \cM(m), \quad T \mapsto d_V^{-1} (\id_m \stmryolt \bar R_V^*) (T \stmryolt \id_{\bar V}) (\id_m \stmryolt \bar R_V)
\]
with $\omega$, we obtain a state $\omega_V$ on $\cM(m \stmryolt V)$.

Again $\cC(V)$ sits in the centralizer of $\omega$.
Setting $P_{V,V} \colon \cM(m \stmryolt V) \to \cC(V)$ to be the conditional expectation for $\omega_V$, we obtain a ucp-multiplier satisfying $\omega = P_{1,1}$.
\end{proof}

\begin{lemma}
Let $\omega$ be a state on $\cM(m \stmryolt \bar U)$, and $Q\colon (\cM, m \stmryolt \bar U) \to (\cC, 1)$ be the ucp-multiplier corresponding to $\omega$ given by Lemma~\ref{lem:cp-map-to-C-and-state}.
Then $Q$ satisfies~\eqref{eq:index-property} if and only if $\omega$ restricts to the normalized categorical trace on $\cC(\bar U)$.
\end{lemma}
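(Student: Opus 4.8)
The plan is to reduce the index property, which is \emph{a priori} a family of conditions indexed by $V \in \cC$, to its single instance at $V = 1_\cC$, and then to translate that one condition into a statement about $\omega$ through the trace pairing that underlies the conditional expectation $E_U = Q_{U,U}$. Throughout I write $n = m \stmryolt \bar U$ for the base object of the source category.

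First I would exploit the multiplier property of $Q$ (Definition \ref{multiplier}). Setting $g = \id_m \stmryolt R_U R_U^* \in \cM(m \stmryolt \bar U \otimes U) = \cM(n \stmryolt U)$, the element occurring in \eqref{eq:index-property} is exactly $g \stmryolt \id_V$. Applying the multiplier relation with dummy object $V$ and $\phi = \psi = \id_U$ gives
\[
Q_{U \otimes V, U \otimes V}(g \stmryolt \id_V) = Q_{U, U}(g) \stmryolt \id_V ,
\]
whereas the right-hand side of \eqref{eq:index-property} is $d_U^{-1}\id_U \stmryolt \id_V$. Evaluating at $V = 1_\cC$ recovers the single identity $Q_{U,U}(\id_m \stmryolt R_U R_U^*) = d_U^{-1}\id_U$, and conversely acting by $\id_V$ on both sides of that identity returns \eqref{eq:index-property} for every $V$. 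Thus the whole index property collapses to one equation.

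Next I would unwind $Q_{U,U}$ using Lemma \ref{lem:cp-map-to-C-and-state}, applied with base object $n$: the component $Q_{U,U}$ is the $\omega_U$-preserving conditional expectation $E_U \colon \cM(n \stmryolt U) \to \cC(U)$, where $\omega_U$ is manufactured from $\omega$ by the stated formula. Since $E_U$ is $\cC(U)$-bimodular with $\tr_U \circ E_U = \omega_U$, and $\tr_U$ is faithful on the finite-dimensional C$^*$-algebra $\cC(U)$, the equation from the previous step is equivalent to
\[
\omega_U\big((\id_m \stmryolt (\id_{\bar U} \otimes a))\,(\id_m \stmryolt R_U R_U^*)\big) = d_U^{-1}\tr_U(a) \qquad (a \in \cC(U)).
\]
Substituting the formula for $\omega_U$ and contracting with one conjugate equation, namely $(R_U^* \otimes \id_{\bar U})(\id_{\bar U} \otimes \bar R_U) = \id_{\bar U}$ to collapse the interior $R_U^* \cdot \bar R_U$, the left-hand side becomes $d_U^{-1}\,\omega(\id_m \stmryolt \gamma_a)$, where
\[
\gamma_a = (\id_{\bar U} \otimes \bar R_U^*)(\id_{\bar U} \otimes a \otimes \id_{\bar U})(R_U \otimes \id_{\bar U}) \in \cC(\bar U).
\]

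Finally I would identify $\gamma_a$ with the categorical transpose $a^{t}$ of $a$. The assignment $a \mapsto a^{t}$ is a linear bijection $\cC(U) \to \cC(\bar U)$ carrying $\id_U$ to $\id_{\bar U}$ and preserving the normalized categorical trace, i.e.\ $\tr_{\bar U}(a^{t}) = \tr_U(a)$. Hence, after cancelling the common factor $d_U^{-1}$, the condition reads $\omega(\id_m \stmryolt a^{t}) = \tr_{\bar U}(a^{t})$ for all $a \in \cC(U)$; since $a \mapsto a^{t}$ maps onto $\cC(\bar U)$, this is exactly the assertion that $\omega$ restricts to the normalized categorical trace on $\cC(\bar U)$, which establishes both implications at once. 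The part requiring the most care is the bookkeeping of the two initial steps: getting the indices in the multiplier relation aligned so that the reduction to $V = 1_\cC$ is clean, and the conjugate-equation manipulation that turns the $\omega_U$-expression into $\omega$ evaluated on the transpose $a^{t}$; the invariance $\tr_{\bar U}(a^{t}) = \tr_U(a)$ is then a standard fact of spherical calculus and can be quoted rather than recomputed.
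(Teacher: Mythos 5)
Your proof is correct, and it differs from the paper's in one genuine structural respect while sharing the same backbone. Both arguments rest on the fact, from Lemma~\ref{lem:cp-map-to-C-and-state}, that the components of $Q$ are the $\omega_V$-preserving conditional expectations onto $\cC(V)$, then convert the index condition into a statement about $\omega$ via the conjugate equations, and conclude because a transpose map is a bijection of $\cC(U)$ onto $\cC(\bar U)$ compatible with the normalized traces. What you do differently is the opening move: invoking the multiplier axiom (Definition~\ref{multiplier}) with $\phi = \psi = \id_U$ to get $Q_{U \otimes V, U \otimes V}(g \stmryolt \id_V) = Q_{U,U}(g) \stmryolt \id_V$ for $g = \id_m \stmryolt R_U R_U^*$, which collapses the whole family of conditions \eqref{eq:index-property} to the single instance $V = 1_\cC$ — legitimate here precisely because $Q$ is a multiplier by hypothesis, the same mechanism as in the Remark following Lemma~\ref{lem:choi-mat-gen}. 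After that, your single two-way equivalence chain (pairing against $a \in \cC(U)$ using faithfulness of $\tr_U$ and bimodularity of the conditional expectation, then substituting the formula for $\omega_U$ and contracting with $(R_U^* \otimes \id_{\bar U})(\id_{\bar U} \otimes \bar R_U) = \id_{\bar U}$) delivers both implications simultaneously. The paper instead proves the forward implication separately, verifying \eqref{eq:index-property} for every $V$ by testing against $S \in \cC(U \otimes V)$ and reducing via the partial trace $S' = (\id \otimes \tr_V)(S)$ together with a sphericity computation; your reduction makes that general-$V$ computation unnecessary, at the modest cost of leaning on the multiplier property up front. One small remark: your $\gamma_a = (\id_{\bar U} \otimes \bar R_U^*)(\id_{\bar U} \otimes a \otimes \id_{\bar U})(R_U \otimes \id_{\bar U})$ is the transpose formed with the dualities on the opposite side from the paper's $S^\vee = (R_U^* \otimes \id_{\bar U})(\id_{\bar U} \otimes S \otimes \id_{\bar U})(\id_{\bar U} \otimes \bar R_U)$; for the standard solutions \eqref{eq:duality-mors} the two coincide on morphisms of $\cC$ (though not on arbitrary fiber operators, where they differ by conjugation by $\rho_U^{1/2}$ versus $\rho_U^{-1/2}$), and the properties you invoke — $\gamma_{\id_U} = \id_{\bar U}$ from the conjugate equation, surjectivity onto $\cC(\bar U)$, and $\tr_{\bar U}(\gamma_a) = \tr_U(a)$ — all hold, so the identification is harmless.
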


\begin{proof}
Let $\omega$ be a state on $\cM(m \stmryolt \bar U)$.
Recall that from $\omega$ we can construct state $\omega_V$ on the C$^*$-algebra $\cM(m \stmryolt (\bar{U} \otimes V))$.
Let $Q$ be the corresponding ucp-multiplier.
Then, to check~\eqref{eq:index-property}, it is enough to check
\[
  \omega_{U \otimes V}((\id_m \stmryolt R_U R_U^* \otimes \id_V) (\id_{m \stmryolt \bar U} \stmryolt S)) = d_U^{-1} \tr_{U \otimes V}(S) \quad (S \in \cC(U \otimes V))
\]
as $Q_{U \otimes V,U \otimes V}$ is the unique conditional expectation $\cM(m \stmryolt( \bar{U} \otimes U \otimes V)) \to \cC(U \otimes V)$ such that $\tr_{U \otimes V} \circ Q_{U \otimes V, U \otimes V} = \omega_{U \otimes V, U \otimes V}$.
By the definition of $\omega_{U \otimes V}$, the left hand side of the above is equal to
\[
  d_U^{-1} \omega\left( \id_m \stmryolt  (R_U^* \otimes \id_{\bar U}) (\id_{\bar U} \otimes S' \otimes \id_{\bar U}) (\id_{\bar U} \otimes \bar R_U) \right)
\]
for $S' = (\id \otimes \tr_V)(S)$.
If $\omega$ restricts to $\tr_{\bar U}$, by a standard sphericity argument we see that this is indeed equal to $d_U^{-1} \tr_{U \otimes V}(S)$.

Conversely, suppose that we know~\eqref{eq:index-property}.
We then have
\[
  d_U Q_{U,U}(\id_m \stmryolt (R_U R_U^* (\id_{\bar U} \otimes S))) = S
\]
for any $S \in \cC(U)$.
From the form of state $\omega_U$, we obtain
\[
  \omega\left( \id_m \stmryolt \left ( (R_U^* \otimes \id_{\bar U}) (\id_{\bar U} \otimes S T \otimes \id_{\bar U}) (\id_{\bar U} \otimes \bar R_U)  \right ) \right) = \tr_U(S T) \quad (T \in \cC(U)).
\]
Again by standard sphericity argument we obtain $\omega(S^\vee) = \tr_{\bar U}(S^\vee)$ for
\[
  S^\vee = (R_U^* \otimes \id_{\bar U}) (\id_{\bar U} \otimes S \otimes \id_{\bar U}) (\id_{\bar U} \otimes \bar R_U).
\]
As such $S^\vee$ exhaust $\cC(\bar U)$, we obtain the claim.
\end{proof}

We now move towards proving that the category $\cMod_\cC(*)$ has enough injective objects.
First we show that objects of a particular type are injective.

\begin{lemma}\label{lemma:C-1-injective}
The object $(\cC, 1_\cC)$ is injective in $\cMod_\cC(*)$.
\end{lemma}

\begin{proof}
For any $(\cM,m) \in \cMod_\cC(*)$, the ucp-multipliers $(\cM,m) \to (\cC,1_\cC)$ are completely determined by the induced states on $\cM(m)$.
Moreover, a completely isometric multiplier $(\cM,m) \to (\cM',m')$ induces a (complete) isometric map of unital C$^*$-algebras $\cM(m) \to \cM'(m')$.
Then the claim follows from the Hahn--Banach theorem.
\end{proof}

\begin{proposition}\label{inj-indep-base-pt}
Let $(\cM, m)$ be an injective pointed $\cC$-module category.
For any $U \in \cC$ and a direct summand $m'$ of $m \stmryolt U$, the pointed $\cC$-module category $(\cM, m')$ is also injective.
\end{proposition}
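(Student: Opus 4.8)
The plan is to isolate the two ways in which $m'$ can differ from $m$ and treat them separately: first I would show that shifting the base point, $(\cM, m) \rightsquigarrow (\cM, m \stmryolt U)$, preserves injectivity, and then that passing to a direct summand does. Applying the second fact to $(\cM, m \stmryolt U)$ and the summand $m'$ then gives the proposition. For the base-point shift, the natural tool is the Choi-type correspondence of Lemma \ref{lem:choi-mat-gen}. Given a ucp-multiplier $\Phi \colon (\cN, n) \to (\cM, m \stmryolt U)$ and a completely isometric ucp-multiplier $\Psi \colon (\cN, n) \to (\cN', n')$, I would convert $\Phi$ into the ucp-multiplier $\hat\Phi \colon (\cN, n \stmryolt \bar U) \to (\cM, m)$ satisfying the index condition \eqref{eq:index-property}, and amplify $\Psi$ to $\Psi' \colon (\cN, n \stmryolt \bar U) \to (\cN', n' \stmryolt \bar U)$, $\Psi'_{V,W} = \Psi_{\bar U \otimes V, \bar U \otimes W}$, which is again completely isometric. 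Injectivity of $(\cM, m)$ then yields a ucp extension $\tilde\Phi' \colon (\cN', n' \stmryolt \bar U) \to (\cM, m)$ with $\tilde\Phi' \Psi' = \hat\Phi$.

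Two points then need checking. First, that $\tilde\Phi'$ again satisfies \eqref{eq:index-property}, so that Lemma \ref{lem:choi-mat-gen} converts it back to a ucp-multiplier $\tilde\Phi \colon (\cN', n') \to (\cM, m \stmryolt U)$: here I would use that any ucp-multiplier sends $\id_n \stmryolt T$ to $\id_{n'} \stmryolt T$ for a morphism $T$ of $\cC$ (an immediate consequence of the multiplier axiom of Definition \ref{multiplier} with trivial tensor factor, together with unitality). Thus $\Psi'$ fixes the element $\id_n \stmryolt (R_U R_U^* \otimes \id_V)$ occurring in \eqref{eq:index-property}, and applying $\tilde\Phi'$ together with $\tilde\Phi' \Psi' = \hat\Phi$ transfers \eqref{eq:index-property} from $\hat\Phi$ to $\tilde\Phi'$. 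Second, that $\tilde\Phi \Psi = \Phi$: this follows because the correspondence of Lemma \ref{lem:choi-mat-gen} is natural in the source, i.e. the Choi transform $(\cdot)^\flat$ satisfies $(P\Psi)^\flat = P^\flat \Psi'$, which is immediate from the explicit formula; bijectivity of the correspondence upgrades $\tilde\Phi'\Psi' = \hat\Phi$ to $\tilde\Phi\Psi = \Phi$.

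For the summand step, I would prove the following: if $(\cP, p)$ is injective and $p'$ is a direct summand of $p$, then $(\cP, p')$ is injective. Fix an orthogonal decomposition with isometries $w' \colon p' \to p$ and $w'' \colon p'' \to p$ satisfying $w'w'^* + w''w''^* = \id_p$. Given ucp $\Phi \colon (\cN, n) \to (\cP, p')$ and completely isometric $\Psi \colon (\cN, n) \to (\cN', n')$, the obstruction is that the corner inclusion of $p'$ into $p$ is not unital, so $\Phi$ cannot be dilated directly to a ucp-multiplier into $(\cP, p)$. I would resolve this by a complementary-state trick: take a ucp-multiplier $\Lambda \colon (\cN, n) \to (\cP, p'')$, for instance $\Lambda = \iota_{p''} \circ \omega$ with $\omega \colon (\cN, n) \to (\cC, 1_\cC)$ a state-multiplier (Lemma \ref{lem:cp-map-to-C-and-state}) and $\iota_{p''} \colon T \mapsto \id_{p''} \stmryolt T$ the canonical unit-inclusion, and form the block-diagonal multiplier
\[
\Phi'_{V,W}(T) = (w' \stmryolt \id_W)\Phi_{V,W}(T)(w'^* \stmryolt \id_V) + (w'' \stmryolt \id_W)\Lambda_{V,W}(T)(w''^* \stmryolt \id_V),
\]
which is ucp into $(\cP, p)$ since $w'w'^* + w''w''^* = \id_p$. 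Injectivity of $(\cP, p)$ gives a ucp extension $\tilde\Phi' \colon (\cN', n') \to (\cP, p)$ of $\Phi'$ along $\Psi$, and I recover the map by compressing,
\[
\tilde\Phi_{V,W}(T) = (w'^* \stmryolt \id_W)\tilde\Phi'_{V,W}(T)(w' \stmryolt \id_V).
\]
This $\tilde\Phi$ is unital because $\tilde\Phi'$ is and $w'^* w' = \id_{p'}$, and $\tilde\Phi\Psi = \Phi$ follows by substituting $\tilde\Phi'\Psi = \Phi'$ and using $w'^* w' = \id_{p'}$, $w'^* w'' = 0$ to kill the complementary term.

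The verifications I am suppressing are formal: that compressions, sums, and amplifications of (ucp-)multipliers are again (ucp-)multipliers, since all the operations involved are conjugations by fixed morphisms of $\cC$ or $\cP$ compatible with the module structure. The genuine content, and the main obstacle, is the non-unitality of the corner inclusion in the summand step, which is exactly what forces the complementary-state construction; the remaining care is the bookkeeping in the shift step ensuring the index condition \eqref{eq:index-property} survives the extension so that the Choi correspondence can be inverted.
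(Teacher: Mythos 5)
Your proof is correct and takes essentially the same route as the paper: reduce to $U = 1_\cC$ via the Choi correspondence of Lemma \ref{lem:choi-mat-gen} (where you helpfully spell out the transfer of \eqref{eq:index-property} and the naturality of the transform, which the paper leaves implicit), then handle the summand by exactly the paper's construction --- a complementary multiplier built from a state via Lemma \ref{lem:cp-map-to-C-and-state}, a block-diagonal sum into $(\cM, m)$, extension by injectivity, and compression by the projection onto $m'$.
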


\begin{proof}
Lemma~\ref{lem:choi-mat-gen} shows that $(\cM, m \stmryolt U)$ is injective.
Thus, it is enough to prove the assertion when $U = 1$.
Let us take the projection $p \in \cM(m)$ corresponding to $m'$, and let $m''$ be the summand corresponding to $1-p$.

Suppose that $\Phi\colon (\cN, n) \to (\cM, m')$ is a ucp-multiplier, and $\Psi\colon (\cN, n) \to (\cN',n')$ is a complete isometric multiplier as in Definition~\ref{def:mod-cat-inj}.
Take a state $\omega \colon \cN(n) \to \C$, and let $P \colon \cN \to \cC$ be the corresponding ucp-multiplier.
Composing this with the ucp-multiplier $Q \colon \cC \to (\cM, m'')$ given by
\[
  \cC(V, W) \to \cM(m'' \stmryolt V, m'' \stmryolt W), \quad T \mapsto (1-p) \stmryolt T,
\]
we obtain a ucp-multiplier $\Phi'\colon (\cN,n) \to (\cM, m'')$.
By taking the direct sum $\Phi \oplus \Phi'$, we get a ucp multiplier $(\cN, n) \to (\cM, m)$.
Then the injectivity of $(\cM, m)$ gives an ucp extension $\tilde\Psi'\colon (\cN',n') \to (\cM, m)$.
Then the maps $\tilde\Psi_{V,W}(T) = (p \stmryolt \id_W) \tilde\Psi'_{V,W}(T) (p \stmryolt \id_V)$ give a desired ucp extension $(\cN', n') \to (\cM, m')$.
\end{proof}

\begin{proposition}\label{C-U-injective}
Let $H \in \Hilb(\cC)$ be a finite dimensional object.
Then $\cM_{\cB(H)}$ is an injective pointed $\cC$-module category.
\end{proposition}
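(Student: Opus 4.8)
The plan is to reduce the injectivity of $\cM_{\cB(H)}$ for a finite dimensional $H$ to the already-established injectivity of $(\cC, 1_\cC)$ in Lemma \ref{lemma:C-1-injective}, using the identification provided by Proposition \ref{prop:mod-cat-for-end-alg-obj} together with the base-point-shifting result in Proposition \ref{inj-indep-base-pt}. The key observation is that since $H$ is finite dimensional, it corresponds via the Yoneda embedding to some object $U \in \cC$, and Proposition \ref{prop:mod-cat-for-end-alg-obj} gives an isomorphism of pointed C$^*$-$\cC$-module categories $\cM_{\cB(H)} \simeq \cC_U$. Thus it suffices to prove that $\cC_U$ is injective as a pointed module category.

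First I would observe that $\cC_U$ is nothing but the module category $\cC$ viewed over itself, but with base point shifted from $1_\cC$ to $U$; concretely, the distinguished object of $\cC_U$ is the object $U$, and its morphism algebra $\cC_U(1_\cC) = \cC(U, U)$ corresponds to endomorphisms of $U$. The point is that $U$ arises as a summand of $1_\cC \stmryolt U = U$ (trivially, or more usefully $U$ is a summand of $1_\cC \stmryolt U$ in the module category $\cC$ with base point $1_\cC$). Therefore I would invoke Proposition \ref{inj-indep-base-pt}: since $(\cC, 1_\cC)$ is injective by Lemma \ref{lemma:C-1-injective}, and since the base point $U$ of $\cC_U$ is (up to the identification) a direct summand of $1_\cC \stmryolt U$ inside the module category $\cC$, the pointed module category $(\cC, U) = \cC_U$ is injective as well.

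The slightly delicate point requiring care is matching conventions: Proposition \ref{inj-indep-base-pt} takes an injective $(\cM, m)$ and concludes injectivity of $(\cM, m')$ for $m'$ a summand of $m \stmryolt U$. Applying this with $\cM = \cC$, $m = 1_\cC$, and $m' = U$ (which is $1_\cC \stmryolt U$ itself, hence trivially a summand), yields that $(\cC, U)$ is injective. One must check that the pointed module category $(\cC, U)$, whose morphism spaces are $\cC(U \otimes V, U \otimes W)$, coincides with $\cC_U$ as defined in the text; this is immediate from the definition of $\cC_U$. Combined with the isomorphism $\cM_{\cB(H)} \simeq \cC_U$ from Proposition \ref{prop:mod-cat-for-end-alg-obj} and the evident fact that injectivity is preserved under isomorphism of pointed module categories, the proof concludes.

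I expect the main (though modest) obstacle to be purely bookkeeping: verifying that injectivity in the sense of Definition \ref{def:mod-cat-inj} is genuinely transported along the isomorphism $\cM_{\cB(H)} \simeq \cC_U$, and that the summand relation $U \mid 1_\cC \stmryolt U$ is correctly interpreted so that Proposition \ref{inj-indep-base-pt} applies verbatim. No new analytic input is needed beyond the three cited results; the content is in assembling them correctly.
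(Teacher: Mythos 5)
Your proposal is correct and follows exactly the paper's own argument: identify $\cM_{\cB(H)} \simeq \cC_U$ via Proposition \ref{prop:mod-cat-for-end-alg-obj}, then combine the injectivity of $(\cC, 1_\cC)$ from Lemma \ref{lemma:C-1-injective} with the base-point-shifting Proposition \ref{inj-indep-base-pt} applied to $U$ as a (trivial) summand of $1_\cC \stmryolt U$. Your extra care about transporting injectivity along the isomorphism is sound bookkeeping that the paper leaves implicit.
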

\begin{proof}
Let $H$ be a finite dimensional Hilbert space object.
By Proposition~\ref{prop:mod-cat-for-end-alg-obj}, there is $U \in \cC$ such that $\cM_{\cB(H)} \simeq \cC_U \simeq (\cC,U)$.
Since $(\cC,1_\cC)$ is injective by Lemma~\ref{lemma:C-1-injective}, $(\cC,U)$ is injective by Proposition~\ref{inj-indep-base-pt}.
\end{proof}

Now, following Arveson's proof for the injectivity of $\cB(H)$ for Hilbert spaces, we obtain the following.

\begin{theorem}\label{thm:arveson-type-thm-for-mod-cat}
For any $H \in \Hilb(\cC)$, the C$^*$-$\cC$-module category $\cM_{\cB(H)}$ is injective.
\end{theorem}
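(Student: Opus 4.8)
The plan is to reduce the injectivity of $\cM_{\cB(H)}$ for a general $H \in \Hilb(\cC)$ to the finite-dimensional case already settled in Proposition \ref{C-U-injective}, by approximating $H$ through its finite-dimensional ``truncations'' and using the minimal-idempotent machinery of Proposition \ref{minimalidemp} to produce the required extension. Concretely, recall that by Remark \ref{equivhilbindcat} we may write $H \simeq \bigoplus_i H(U_i) \otimes U_i$, and for each finite subset $F \subset \Irr(\cC)$ we obtain a finite-dimensional object $H_F$ with an isometric inclusion $H_F \hookrightarrow H$ whose corresponding projection $p_F \in \cB(H)(1_\cC)$ compresses $\cB(H)$ down to (a model of) $\cB(H_F)$. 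These projections increase to $\id_H$ as $F$ exhausts $\Irr(\cC)$, and on each fiber the compression is eventually the identity.

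First I would set up the extension problem: given a completely isometric ucp-multiplier $\Psi\colon (\cN, n) \to (\cN', n')$ and a ucp-multiplier $\Phi\colon (\cN, n) \to \cM_{\cB(H)}$, I want to build $\tilde\Phi\colon (\cN', n') \to \cM_{\cB(H)}$ with $\tilde\Phi\Psi = \Phi$. For each finite $F$, the compression $p_F \Phi(-) p_F$ is a cp-multiplier into $\cM_{\cB(H_F)}$, which is injective by Proposition \ref{C-U-injective} (via Proposition \ref{prop:finite-dim-Hilbert-spaces}), so it extends to a cp-multiplier $\tilde\Phi_F\colon (\cN', n') \to \cM_{\cB(H_F)} \subset \cM_{\cB(H)}$ along $\Psi$. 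The family $(\tilde\Phi_F)_F$ is a net of multipliers valued in $\cM_{\cB(H)}$, and using the description of $\cM_{\cB(H)}$ as an $\ell^\infty$-product over $\Irr(\cC)$ of bounded operator spaces I would view the relevant maps as landing in a dual Banach space. Passing to a pointwise weak$^*$-limit (a subnet, by Banach--Alaoglu applied fiberwise and a compactness argument for the uniformly bounded multiplier data) yields a limiting ucp-multiplier $\tilde\Phi$; the defining multiplier identity of Definition \ref{multiplier} and the unitality/complete positivity are all preserved under such limits since they are expressed by continuous (weak$^*$-continuous, in the right variable) linear relations. Because each $\tilde\Phi_F$ restricts to $p_F \Phi p_F$ on $\Psi(\cN)$ and $p_F \to \id_H$ strongly fiberwise, the limit satisfies $\tilde\Phi\Psi = \Phi$.

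The main obstacle I expect is ensuring that the weak$^*$-limit genuinely defines a \emph{multiplier}, i.e.\ that the compatibility condition of Definition \ref{multiplier} survives the limiting process and that the limit is valued in $\cM_{\cB(H)}$ rather than some larger completion; this is where the $\ell^\infty$-product structure of the morphism spaces and the uniform bound coming from unitality (each $\tilde\Phi_F$ is ucp, hence contractive on the relevant operator systems) must be combined carefully, much as in Arveson's original argument for $\cB(H)$. A clean alternative, which I would pursue if the direct limiting is delicate, is to fix a faithful representation and run Proposition \ref{minimalidemp} directly: take $S$ to be the convex semigroup of equivariant ucp multipliers of $\cM_{\cB(H)}$ fixing the image of $\Phi$, extend $\Phi$ fiberwise on finite truncations as above to land inside $S$ after averaging, and extract a minimal idempotent whose range furnishes the extension. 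Either way, finite-dimensionality of the truncations plus a compactness/limit argument is the heart of the matter, and verifying that the multiplier axiom and complete positivity pass to the limit is the step requiring the most care.
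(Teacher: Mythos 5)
Your proposal follows essentially the same route as the paper's proof: truncate $H$ by an increasing net of finite-dimensional Hilbert space objects, extend over each truncation using Proposition \ref{C-U-injective}, compress back into $\cB(H)$, and pass to a pointwise weak-operator limit along a subnet. Two points of execution need correction or completion.

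First, truncating the support alone is not enough: a finite subset $F \subset \Irr(\cC)$ yields a finite-dimensional object only when the fibers $H(U_i)$ are themselves finite dimensional, which need not hold for general $H \in \Hilb(\cC)$ (finite-dimensionality means finite-dimensional fibers \emph{and} finite support). The paper therefore takes an increasing net $(H_\lambda)_\lambda$ of finite-dimensional objects with $H_\lambda(X) \subset H(X)$ and $H(X) = \varinjlim_\lambda H_\lambda(X)$, i.e.\ one also cuts each fiber down by finite-rank projections. Relatedly, your claim that ``on each fiber the compression is eventually the identity'' fails when fibers are infinite dimensional; only strong convergence $p_\lambda^* p_\lambda \to \id$ holds, which is what you in fact invoke later and is all that is needed (it gives both $\tilde\Phi\Psi = \Phi$ and unitality of the limit).

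Second, the obstacle you flag---that the limit should again be a multiplier valued in $\cM_{\cB(H)}$---is resolved in the paper not by verifying the multiplier identity of Definition \ref{multiplier} directly in the limit, but by taking the limit at the level of the natural transformations $B(V) \to \cB(H)(V)$ fiberwise (after passing to a subnet for pointwise WOT convergence on each bounded fiber map), and then invoking the bijection between natural transformations and multipliers of Proposition \ref{prop:jp1-corr-betw-multip-nat-trans}; the explicit formula \eqref{eq:map-to-multiplier} then shows the associated multipliers converge as well, so complete positivity (setting $V = W$) and unitality pass to the limit. Your fallback via Proposition \ref{minimalidemp} is not needed here---that machinery enters only in the construction of injective envelopes (Theorem \ref{thm:injective-envelopes-mod-cat}). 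With these two repairs, your primary argument is the paper's proof.
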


\begin{proof}
Let $A$ and $B$ be $C^*$-algebra objects, and suppose that $\iota\colon A \to B$ is a completely isometric multiplier and $\psi\colon A \to \cB(H)$ is a ucp multiplier.
We then need to construct an extension $\tilde \psi \colon B \to \cB(H)$.

Take an increasing net $(H_\lambda)_{\lambda \in \Lambda}$ of finite dimensional Hilbert space objects such that $H_\lambda(X) \subset H(X)$ and $H(X) = \varinjlim_{\lambda \in \Lambda} H_\lambda(X)$ for all $X \in \cC$.
Let us denote the orthogonal projections $H(X) \to H_\lambda(X)$ by $p_{\lambda,X}$.
Then we get a morphism $p_\lambda\colon H \to H_\lambda$ by
\[
  (p_{\lambda,X})_{X \in \Irr(\cC)} \in \ell^\infty\mhyph\smashoperator[l]{\prod_{X \in \Irr(\cC)}} \cB(H(X),H_\lambda(X)) \simeq \Hilb(\cC)(H,H_\lambda).
\]
The maps $\Ad(p_\lambda)\colon \cB(H_\lambda) \to \cB(H)$ and $\Ad(p_\lambda^*)\colon \cB(H) \to \cB(H_\lambda)$ are cp and ucp multipliers, respectively~\cite{jp1}*{Lemma 4.27}.

For fixed $\lambda$, since $\Ad(p_\lambda^*) \circ \psi$ is a ucp map from $A$ to $\cB(H_\lambda)$, we obtain a ucp extension $\tilde\psi_\lambda \colon B \to \cB(H_\lambda)$ by Proposition~\ref{C-U-injective}.
Then $\psi_\lambda = \Ad(p_\lambda) \circ \tilde{\psi}_\lambda$ is a completely contractive positive map from $B$ to $\cB(H)$.
We claim that a limit of the family $(\psi_\lambda)_\lambda$ is a desired ucp extension of $\psi$.

For each $V \in \cC$, the map
\[
  \tilde\psi_{\lambda, V}\colon B(V) \to \cB(H)(V) \simeq \ell^\infty\mhyph\smashoperator[l]{\prod_{X \in \Irr(\cC)}} \cB((H \otimes V)(X),H(X))
\]
is completely contractive.
By passing to a subnet, we may assume that $\psi_V = \lim_\lambda \psi_{\lambda, V}$ exists as a complete contraction for each irreducible $V$, where we consider the topology of pointwise convergence with respect to the weak operator topology on $\cB(H)(V)$ up to the above identification.

Now, collecting $\tilde\psi_V$ as above for irreducible $V$, we get a natural transformation $\tilde\psi\colon B \to \cB(H)$, and by Proposition~\ref{prop:jp1-corr-betw-multip-nat-trans} a multiplier $\tilde\Psi\colon \cM_B \to \cM_{\cB(H)}$.
Let $(\tilde\Psi_{\lambda, V, W})_{V, W}$ be the cp multiplier corresponding to $\psi_\lambda$.
We then claim that $\tilde\Psi_{V, W}(T) = \lim_\lambda \tilde\Psi_{\lambda, V, W}(T)$ for the weak operator topology.
This follows from the correspondence~\eqref{eq:map-to-multiplier} and $\tilde\psi_U(S) \otimes \id_W = \lim_\lambda \tilde\psi_{\lambda, U}(S) \otimes \id_W$ for $S \in B(U)$.

Then, setting $V = W$, we obtain the complete positivity of $\tilde\Psi$.
Moreover, $\id_{V_B} \in \cM_B(V_B, V_B)$ is the image of $\id_{V_A} \in \cM_A(V_A, V_A)$, so we have
\[
  \tilde\Psi_{V,V}(\id_{V_B}) = \lim (p_\lambda^* p_\lambda)_V = \id_{H(V)}.
\]
This proves that $\tilde\Psi$ is unital, hence $\tilde\psi$ is a ucp extension of $\psi$.
\end{proof}

\begin{definition}
A functor $F\colon \cM \to \cN$ between C$^*$-$\cC$-module is said to be an embedding if it is faithful and norm-closed at the level of morphism spaces.
\end{definition}

\begin{definition}\label{def:inj-envelope-mod-cat}
An \emph{injective envelope} of a C$^*$-$\cC$-module category $(\cM, m)$ is an injective $\cC$-module category $(\cI, m')$ endowed with a faithful module functor $\iota \colon (\cM, m) \to (\cI, m')$ such that $\Id_\cI$ is the only ucp extension of $\iota$.
\end{definition}

\begin{theorem}\label{thm:injective-envelopes-mod-cat}
Every cyclic C$^*$-$\cC$-module category $(\cM, m)$ has an injective envelope.
\end{theorem}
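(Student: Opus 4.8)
The plan is to mimic the classical Hamana construction, now available to us because Theorem \ref{thm:arveson-type-thm-for-mod-cat} supplies enough injective objects and Proposition \ref{prop:jp1-corr-betw-multip-nat-trans} lets us pass freely between multipliers of module categories and natural transformations of the corresponding C$^*$-algebra objects. First I would replace the abstract data $(\cM, m)$ by a concrete one: using the correspondence of Theorem \ref{equivalgmod}, set $A = A_\cM$, the C$^*$-algebra object whose fibers are $A_\cM(X) = \cM(m \stmryolt X, m)$, so that $\cM$ is recovered (up to the cyclic part generated by $m$) as $\cM_A$. By the Gelfand--Naimark-type Theorem \ref{thm:mod-cat-Gelfand-Naimark}, $A$ admits a faithful representation $A \to \cB(H)$ for some $H \in \Hilb(\cC)$, which at the level of module categories is a completely isometric ucp multiplier $\iota_0 \colon (\cM, m) \to \cM_{\cB(H)}$ into an object that is injective by Theorem \ref{thm:arveson-type-thm-for-mod-cat}. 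This realizes the first of Hamana's two steps: embedding the given object into a large injective one.

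Next I would run the minimal-idempotent argument of Proposition \ref{minimalidemp}. Consider the set $S$ of ucp multipliers $\cM_{\cB(H)} \to \cM_{\cB(H)}$ that restrict to the identity on the image of $\iota_0$; this is a convex subsemigroup, and under the identification of multipliers with natural transformations $\cB(H) \to \cB(H)$ the fibers live in the dual Banach spaces $\cB(H)(U) \cong \ell^\infty\mhyph\prod_{X} \cB((H \otimes U)(X), H(X))$, so $S$ is closed in the topology of pointwise weak* convergence. Proposition \ref{minimalidemp} then yields a minimal idempotent $\phi_0 \in S$. I would set $\cI$ to be the image of $\phi_0$, equipped with the Choi--Effros product induced by $\phi_0$; concretely this means defining the new composition via $T \cdot_{\phi_0} S = \phi_0(TS)$ at the level of morphism spaces, which makes $\cI$ into a C$^*$-$\cC$-module category (by an argument parallel to the Choi matrix / idempotent completion discussion already invoked in Lemma \ref{lem:choi-mat-gen}) that is injective because it is the image of a ucp idempotent on an injective object. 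The faithful functor $\iota \colon (\cM, m) \to (\cI, m')$ is $\iota_0$ followed by the corestriction to the image.

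To finish I would verify the essentiality condition of Definition \ref{def:inj-envelope-mod-cat}, namely that $\Id_\cI$ is the only ucp extension of $\iota$. This is where the minimality of $\phi_0$ does the work: if $\psi \colon \cI \to \cI$ is a ucp multiplier restricting to the identity on $\iota(\cM)$ but $\psi \neq \Id_\cI$, then $\psi \circ \phi_0$ (viewed inside the ambient $\cM_{\cB(H)}$) would be an element of $S$ strictly below $\phi_0$ in the preorder $\preceq$, contradicting minimality. Exactly as in the proof of Theorem \ref{thm:inj-env-YD-G-alg}, this uniqueness claim upgrades to essentiality: given any completely isometric ucp multiplier $\psi \colon \cI \to \cN$ with $\psi \circ \iota$ completely isometric, one uses injectivity of $\cI$ (or of an injective object containing $\cN$) to produce a ucp retraction $\eta$ back to $\cI$, whence $\eta \circ \psi$ fixes $\iota(\cM)$ and so equals $\Id_\cI$, forcing $\psi$ to be completely isometric.

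The main obstacle I expect is bookkeeping at the interface between the two languages, not any deep new idea. Two points need genuine care. First, the Choi--Effros multiplication must be shown to produce an honest C$^*$-$\cC$-module category structure on $\Img(\phi_0)$ and the functor $\iota$ must be a genuine faithful module functor in the sense of Definition \ref{def:inj-envelope-mod-cat}, rather than merely a multiplier; here the dictionary of Proposition \ref{prop:jp1-corr-betw-multip-nat-trans} together with the cyclicity of $(\cM, m)$ is essential, since it is cyclicity that lets us recover a module category from its algebra object and hence makes the passage $\cM \leftrightarrow A_\cM$ reversible. Second, one must confirm that the weak* topology used in Proposition \ref{minimalidemp} is the correct one on the space of multipliers, i.e.\ that $S$ really is pointwise weak*-closed; this is the same verification already carried out fiberwise in the proof of Theorem \ref{thm:arveson-type-thm-for-mod-cat}, so I would simply cite that argument. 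Everything else is a faithful categorical transcription of Hamana's original proof.
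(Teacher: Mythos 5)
Your proposal follows the paper's own proof essentially verbatim: embed $(\cM,m)$ into the injective object $\cM_{\cB(H)}$ via Theorems \ref{thm:mod-cat-Gelfand-Naimark} and \ref{thm:arveson-type-thm-for-mod-cat}, extract a minimal idempotent ucp multiplier fixing the image via Proposition \ref{minimalidemp}, endow its image with the Choi--Effros product to get an injective C$^*$-$\cC$-module category, and derive rigidity of the embedding from minimality. Your closing verification of essentiality is a correct (and standard) addition beyond what Definition \ref{def:inj-envelope-mod-cat} requires, and your two flagged bookkeeping points are exactly the ones the paper handles with the $U \oplus V \oplus W$ direct-sum trick and the multiplicative domain argument.
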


\begin{proof}
As we only deal cyclic module categories in this proof, for simplicity we write $\cM$ instead for $(\cM,m)$, and similarly for other cyclic module categories in this proof.

Theorems~\ref{thm:mod-cat-Gelfand-Naimark} and~\ref{thm:arveson-type-thm-for-mod-cat} imply that $\cM$ embeds into an injective $\cC$-module W$^*$-category $\cN$, namely $\cN = \cM_{\cB(H)}$ for some $H \in \Hilb(\cC)$.
Define $i \colon \cM \to \cN$ to be this embedding.
Consider the semigroup of ucp multipliers
\[
  S = \{ \Phi \colon \cN \to \cN \mid \Phi i = i \}.
\]
This can be identified with a weak$^*$-closed convex set in the dual Banach space
\[
  \ell^\infty\mhyph\smashoperator[l]{\prod_{U,V \in \Irr(\cC)}} \cN(U,V) = \Biggl[ \ell^1\mhyph \smashoperator[l]{\bigoplus_{U,V \in \Irr(\cC)}} \cN(U,V)_* \Biggr]^*.
\]
By Proposition~\ref{minimalidemp}, $S$ has a minimal idempotent $\Psi$.

Let us make sense of the image of $\Psi$ as a C$^*$-$\cC$-module category.
For each $U$ and $V$ in $\cC$, set
\[
  \Psi(\cN)(U, V) = \Psi(\cN(U, V)).
\]
When $U = V$, we get a structure of C$^*$-algebra on this space by the Choi--Effros product $S \cdot T = \Psi(S T)$.
Generally, by considering $X = U \oplus V \oplus W$ and $\Psi(\cN)(X, X)$, we get the composition maps
\[
  \Psi(\cN)(V, W) \times \Psi(\cN)(U, V) \to \Psi(U, W)
\]
that defines a C$^*$-category $\Psi(\cN)$.
Moreover, using the fact that $\Psi$ is a multiplier, we obtain maps
\[
  \Psi(\cN)(V, W) \times \cC(V', W') \to \Psi(\cN)(V \otimes V', W \otimes W')
\]
that defines a structure of $\cC$-module category on $\Psi(\cN)$.

We next claim that $\Psi(\cN)$ is an injective envelope for $\cM$.
Again by construction $i$ induces maps
\[
  \cM(m \stmryolt V, m \stmryolt W) \to \Psi(\cN)(V, W),
\]
and by the multiplicative domain argument this is a functor of C$^*$-categories.
It is straightforward to check the compatibility with $\cC$-module structures.
Moreover, the uniqueness of $\Id_{\Psi(\cN)}$ as a ucp multiplier stabilizing $\cM$ is obvious from the characterization of $\Psi$.
\end{proof}

Let $(\cM,m)$ be a cyclic $C^*$-$\cC$-module category, and let $\Psi\colon (\cM,m) \to (\cM,m)$ be a ucp multiplier.
The proof of Theorem~\ref{thm:injective-envelopes-mod-cat} shows that the image of $\Psi$ has a natural composition rule which makes it a cyclic $C^*$-$\cC$-module category: this result does not depend on minimality of $\Psi$, and it will be used later again to prove existence of injective envelopes for a class of bimodule categories, which can be equivalently understood as module categories over $\cC \boxtimes \cC^{\text{op}}$.

\section{Operator system theory for centrally pointed bimodule categories}\label{sec:op-sys-half-br-po-bim}

\subsection{Centrally pointed bimodules}

Let us review the ingredients of~\cite{YDpaper}.
Let $\cC$ be a rigid C$^*$-tensor category, and let $\cM$ be a $\cC$-bimodule C$^*$-category.
We assume, for simplicity and without loss of generality, that the bimodular structure is strict.
This data is equivalent to the data of a strict right $\cC^{mp} \boxtimes \cC$-module C$^*$-category, where $\cC^{mp}$ is the monoidal opposite of $\cC$.
An object $m \in \cM$ is said to be central if it is equipped with a family
\[
  \sigma_U\colon U \stmryogt m \simeq m \stmryolt U
\]
of unitary isomorphisms which is natural in $U \in \cC$, where $\stmryogt$ denotes the left $\cC$-action and $\stmryolt$ denotes the right $\cC$-action, and which  satisfies the half-braiding condition
\[
  \sigma_{U \otimes V} = (\sigma_U \stmryolt \id_V) \circ (\id_U \stmryogt \sigma_V) \ .
\]
Given a central object as above, we often work with the induced maps
\[
  \Sigma_{U;V,W}:\cM(m \stmryolt V,m \stmryolt W) \to \cM(m \stmryolt (U \otimes V), m \stmryolt (U \otimes W)) \ ,
\]
defined by
\begin{equation*}
  \Sigma_{U;V,W}(T) = (\sigma_U \stmryolt \id_W) (\id_U \stmryogt T) (\sigma_U^* \stmryolt \id_V).
\end{equation*}

\begin{definition}\label{def:cycliccentrallypointed bimodules}
A cyclic centrally pointed/centrally cyiclic bimodule C$^*$-category over $\cC$ is a C$^*$-$\cC$-bimodule category $\cM$ equipped with a central object $(m,\sigma)$ which is moreover cyclic for the right $\cC$-module structure.
\end{definition}

Observe that, in Definition~\ref{def:cycliccentrallypointed bimodules}, the object $m$ is cyclic for the right $\cC$-module structure if and only if it is cyclic for the left $\cC$-module structure, due to the central structure $\sigma$.

The structures and definitions above have a purely algebraic version, obtained by ignoring the $*$ and the Banach space structures.

\begin{definition}
Let $(\cM, m)$ and $(\cM', m')$ be centrally pointed bimodule categories.
A \emph{central functor} between them is a bimodule functor $F \colon \cM \to \cM'$ endowed with an isomorphism $F_0 \colon m' \to F(m)$ which is compatible with central generators and structure morphisms, in the sense that the diagram
\begin{equation*}
  \begin{tikzcd}
    U \stmryogt m' \arrow[r, "\sigma'_U"] \arrow[d] & m' \stmryolt U \arrow[d]\\ U \stmryogt F(m) \arrow[d,"{\tensor[_2]{F}{}}"'] & F(m) \stmryolt U \arrow[d,"F_2"]\\ F(U \stmryogt m) \arrow[r,"F(\sigma_U)"] & F(m \stmryolt U)\\
  \end{tikzcd}
\end{equation*}
is commutative.
In the above diagram, $\tensor[_2]{F}{}$ and $F_2$ denote left and right module structures of $F$, respectively.
If $\cM$ and $\cM'$ are C$^*$-$\cC$-bimodule categories, $F$ is $*$-preserving and $F_0$ is unitary, we say that $F$ is a functor of central C$^*$-$\cC$-bimodule categories or that it is a bimodule central bimodule $*$-functor.
\end{definition}

\begin{definition}\label{def:nat-trans-bimod-functors}
Let $F$ and $F'$ be central functors from $(\cM, m)$ to $(\cM, m')$.
A \emph{natural transformation of bimodule functors} $\alpha\colon F \to F'$ is given by a natural transformation of the underlying linear functors $\alpha_X \colon F(X) \to F'(X)$ for $X \in \cM$ such that the diagrams
\[
  \begin{tikzcd}
    U \stmryogt F(X) \arrow[r, "{\tensor[_2]{F}{}}"] \arrow[d, "\id_U \otimes \alpha_X"'] & F(U \stmryogt X) \arrow[d, "\alpha_{U \stmryogt X}"]\\ U \stmryogt F'(X) \arrow[r, "{\tensor[_2]{F}{^\prime}}"] & F'(U \stmryogt X)
  \end{tikzcd}
  \quad
  \begin{tikzcd}
    F(X) \stmryolt U \arrow[r, "F_2"] \arrow[d, "\alpha_X \otimes \id_U"'] & F(X \stmryolt U) \arrow[d, "\alpha_{X \stmryolt U}"]\\ F'(X) \stmryolt U \arrow[r, "F'_2"] & F'(X \stmryolt U)
  \end{tikzcd}
  \quad
  \begin{tikzcd}
    m' \arrow[r, "F_0"] \arrow[dr, "F'_0"'] & F(m) \arrow[d, "\alpha_m"]\\ & F'(m)
  \end{tikzcd}
\]
\end{definition}
commute.

\begin{definition}\label{def:centrallypointedcyclicbimodules}
Denote by $\CB(\cC)$ the category of centrally pointed C$^*$-$\cC$-bimodule categories and central bimodule $*$-functors, and by $\cCB(\cC)$ the subcategory of cyclic centrally pointed C$^*$-$\cC$-bimodule categories.
\end{definition}

The main result of~\cite{YDpaper} is the following correspondence.

\begin{theorem}[\cite{YDpaper}*{Theorem 4.2}]\label{thm:duality-YD-alg-centr-bimods}
The category of unital Yetter--Drinfeld $G$-C$^*$-algebras and equiavariant $*$-homomorphisms is equivalent to $\cCB(\Rep(G))$.
\end{theorem}

Recall from Section~\ref{sec:qg-prelim} that, for each unital $G$-C$^*$-algebra $A$, we have the category $\cD_A$ of $G$-equivariant and finitely generated projective right Hilbert $A$-modules, which admit natural right module category structure over $\Rep(G)$.
If $A$ is in addition an Yetter--Drinfeld algebra, then there is an extra left module structure on $\cD_A$, and $A$ as an object of $\cD_A$ becomes a central generator.
Concretely, the left action of $V \in \Rep(G)$ on $X \in \cD_A$ is given by
\[
  V \stmryogt X = X \otimes_A (H_V \otimes A),
\]
where the right $A$-module structure on $H_V \otimes A$ is defined by
\begin{equation}\label{eq:right-RepG-mod-str}
  \pi_V \colon A \to \End_A(H_V \otimes A), a \mapsto \sum_{i, j} m^V_{i j} \otimes (v_{i j} \rhd a),
\end{equation}
where we write the representation $V$ as $V = \sum_{i, j} m^V_{i j} \otimes v_{i j} \in \cB(H_V) \otimes \cO(G)$.
Conversely, let $(\cM, m)$ be a centrally pointed bimodule category over $\Rep(G)$, and $A$ be the corresponding Yetter--Drinfeld $G$-C$^*$-algebra.
Given $\xi$, $\zeta \in H_U$, $\eta \in H_V$, and $T \in \cM(m, m \stmryolt V)$, the element
\begin{equation}\label{eq:OG-action-from-centr-bimod}
  \overline{\xi \otimes \eta \otimes \overline{\rho_U^{-1/2} \zeta}} \otimes (\sigma_U \otimes \id) (\id \otimes T \otimes \id) (\sigma_U^{-1} \stmryogt \id) (\id_m \stmryolt \bar{R}_U)
\end{equation}
in $\bar{H}_{U \otimes V \otimes \bar{U}} \otimes \cM(m, m \stmryolt (U \otimes V \otimes \bar{U}))$ (we suppressed the associators) represents the action of $\bar{\xi} \otimes \zeta \in \bar{H}_U \otimes H_U \subset \cO(G)$ to $\bar{\eta} \otimes T \in \bar{H}_V \otimes \cM(m, m \stmryolt V) \subset A$.

\medskip
Let us continue with a few more preliminary materials.
An analogue of the canonical equivariant embedding $\C \to A$ for a unital Yetter--Drinfeld $G$-C$^*$-algebra $A$ is the following.

\begin{proposition}\label{prop:C-embeds-into-half-braided-mod-cat}
Let $(\cM, m)$ a centrally pointed bimodule category.
There is a embedding of centrally pointed bimodule categories $F \colon (\cC,1_\cC) \to (\cM,m)$ which sends $U$ to $m \stmryolt U$.
\end{proposition}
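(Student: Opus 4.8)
The plan is to let the right $\cC$-action define $F$ on objects and morphisms, and to use the half-braiding $\sigma$ to supply the left-module structure that turns $F$ into a central functor. First I would note that $(\cC, 1_\cC)$ is itself centrally pointed, carrying the trivial half-braiding $\sigma^\cC_U = \id_U$ on $1_\cC$ (under strictness $U \stmryogt 1_\cC = U = 1_\cC \stmryolt U$), so the statement lives in $\CB(\cC)$. I then set $F(U) = m \stmryolt U$ on objects and $F(\psi) = \id_m \stmryolt \psi$ on morphisms, which is a $*$-functor because $\stmryolt$ is a $*$-bifunctor. The right-module structure and base point are trivial, $F_{2;X,V} = \id$ and $F_0 = \id_m \colon m \to F(1_\cC) = m$, while the essential ingredient is the left-module structure, defined from the half-braiding (under the strictness identifications $\Psi', \Psi''$) by
\[
\tensor[_2]F{_{V,X}} = \sigma_V \stmryolt \id_X \colon V \stmryogt (m \stmryolt X) \to m \stmryolt (V \otimes X) = F(V \stmryogt X),
\]
a unitary since $\sigma_V$ is.

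Next I would check that $(F, \tensor[_2]F{}, F_2)$ obeys the bimodule-functor coherence axioms. Naturality of $\tensor[_2]F{_{V,X}}$ in both variables reduces, after using strictness and functoriality of $\stmryolt$ and $\stmryogt$, to the naturality of $\sigma$. The left-module pentagon is exactly where centrality pays off: under strictness it reads $\tensor[_2]F{_{V \otimes W, X}} = \tensor[_2]F{_{V, W \stmryogt X}}(\id_V \stmryogt \tensor[_2]F{_{W,X}})$, and expanding both sides yields precisely $\sigma_{V \otimes W}\stmryolt \id_X = ((\sigma_V \stmryolt \id_W)(\id_V \stmryogt \sigma_W))\stmryolt \id_X$, which is the braid relation defining $\sigma$. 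The remaining mixed axiom involving $\Psi''$ and the unit constraints are immediate because $F_2$ is the identity. To see that $F$ is central I would then chase the diagram \eqref{eq:centr-br-ftr}: with the target half-braiding being the given $\sigma$ and the source half-braiding trivial, both legs collapse to $\sigma_U$, so the square commutes.

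It remains to prove $F$ is an embedding, i.e.\ faithful and norm-closed on morphism spaces. Since $F$ restricts on each endomorphism algebra to a $*$-homomorphism $\cC(U) \to \cM(m \stmryolt U)$, it suffices to establish faithfulness: each such $*$-homomorphism is then injective, hence isometric, and $F$ is isometric on every hom-space via $\norm{F(\psi)}^2 = \norm{F(\psi^*\psi)}$, so its image is complete and therefore closed. Faithfulness I would derive from rigidity: assuming $m \neq 0$, for a simple $X \in \cC$ the fact that $\stmryolt$ is a $*$-functor together with $\bar R_X^* \bar R_X = d_X\, \id_{1_\cC}$ gives
\[
(\id_m \stmryolt \bar R_X)^*(\id_m \stmryolt \bar R_X) = \id_m \stmryolt (\bar R_X^* \bar R_X) = d_X\, \id_m \neq 0,
\]
so $m \stmryolt (X \otimes \bar X) = (m \stmryolt X) \stmryolt \bar X \neq 0$, whence $m \stmryolt X \neq 0$ for every nonzero $X$. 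By semisimplicity of $\cC$ this forces each $*$-homomorphism $\cC(U) \to \cM(m \stmryolt U)$ to have trivial kernel, so $F$ is faithful.

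I expect the coherence bookkeeping of the second paragraph to be the most laborious part, but the single genuinely nontrivial point is that the left-module associativity of $F$ coincides with the braid relation of $\sigma$; by contrast the faithfulness is the short rigidity computation above, once one records that $m$ is nonzero.
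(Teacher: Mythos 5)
Your construction is exactly the paper's: $F(U) = m \stmryolt U$ with $F_2 = \id$, the left-module structure $\tensor[_2]{F}{} = \sigma \stmryolt \id$ supplied by the half-braiding, and the coherence conditions reduced to the braid relation for $\sigma$ --- so the approach is essentially identical, and your verification is correct. The paper's proof actually stops there and never addresses the embedding (faithfulness) claim, so your closing rigidity argument --- that $\bar R_X^* \bar R_X = d_X \id$ forces $m \stmryolt X \neq 0$ for nonzero $X$ when $m \neq 0$, whence the $*$-homomorphisms $\cC(U) \to \cM(m \stmryolt U)$ are injective and isometric --- is a correct supplement to, not a departure from, the paper's argument.
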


\begin{proof}
For simplicity we assume that the right action of $\cC$ on $\cM$ is strict.
Then we get a right $\cC$-module functor $F(U) = m \stmryolt U$ with $F_2 = \id_{m \stmryolt U}$.
We extend it to a bimodule functor by setting
\[
  {\tensor[_2]{F}{}} = \sigma_U \otimes \id_V\colon (U \stmryogt m) \stmryolt V \to m \stmryolt U \stmryolt V = m \stmryolt (U \otimes V).
\]
Then consistency conditions of ${\tensor[_2]{F}{}}$ follow from the braid relations for $\sigma$.
\end{proof}

\begin{definition}\label{def:C-hat-U}
Let $\cM$ be a right C$^*$-$\cC$-module category, and $m \in \cM$.
We denote by $\hat{\cM}_m$ the idempotent completion of $\cC$ with the enlarged morphism sets
\begin{equation}\label{eq:hat-M-morphism-set}
  \hat{\cM}_m(V,W) = \Nat_b ( m \stmryolt \iota \otimes V, m \stmryolt \iota \otimes W) \simeq \ell^\infty\mhyph\smashoperator[l]{\prod_{X \in \Irr(\cC)}} \cM(m \stmryolt (X \otimes V), m \stmryolt (X \otimes W)).
\end{equation}
\end{definition}

This is a bimodule category, with $1$ (which corresponds to $m \in \cM$) being a central generator.
The bimodule structure of $\hat\cM_m$ is as follows.
At the level of the objects it is induced by the tensor structure of $\cC$ At the level of morphisms, for $T \in \hat\cM_m(V,W)$, $X,Y,Z \in \cC$ and $\phi \in \cC(X,Y)$, we define the $Z$-component of the right action $T \stmryolt  \phi$ of $\phi$ on $T$ by
\[
  (T \stmryolt  \phi)_Z = T_Z \stmryolt  \phi \ .
\]
On the right hand side of the above equation we have used the module structure of $\cM$.
The left action $\phi \stmryogt T$, on the other hand, has the $Z$ component defined by the commutative diagram
\begin{equation*}
\begin{tikzcd}[row sep=small, column sep=small]
  m \stmryolt  Z \triangleleft(X \otimes V) \arrow[rrrr, "(\phi \stmryogt T)_Z"] \arrow[dd] &  & &  & m \stmryolt  Z \stmryolt  (Y \otimes W)  \\
   &  &  &  & \\
  m \stmryolt (Z \otimes X) \stmryolt  V \arrow[rr, "T_{Z \otimes X}"'] &  & m \stmryolt (Z \otimes X) \stmryolt W \arrow[rr] &  & m \stmryolt  Z \stmryolt (X \otimes W). \arrow[uu, "\id_{m \stmryolt  Z} \stmryolt  (\phi \otimes \id_W)"']
\end{tikzcd}
\end{equation*}
Using the naturality of $T$, it is easy to check that these indeed define a $\cC$-bimodule structure on $\hat{\cM}_m$.
It is moreover compatible with the $*$-structure, so that $\hat{\cM}_m$ has a canonical structure of a C$^*$-$\cC$-bimodule category.
It is immediate that $1 \in \hat\cM_m$ is a central object, with $\sigma_U$ given by $\id_U$.

This is motivated by the `dual category' $\hat\cC$ introduced in~\cite{NY1}, which corresponds to case of $\cM = \cC$ and $m = 1_\cC$.
In this case there is a natural C$^*$-tensor structure on $\hat\cC$ (with nonsimple unit) such that $U \mapsto U$ is a C$^*$-tensor functor from $\cC$ to $\hat\cC$.
If we take $m = U$ instead, because of the centrality, the resulting category can be identified with $\hat\cC_U$, with morphism sets
\[
  \hat\cC_U(V, W) = \hat\cC(U\otimes V,U \otimes W) = \ell^\infty\mhyph\smashoperator[l]{\prod_{i \in \Irr(\cC)}} \cC(U_i \otimes U \otimes V, U_i \otimes U \otimes W)
\]
for $V, W \in \cC$.
The evaluation at the unit $1_\cC$ gives a multiplier $\hat{\cC}_U \to \cC_U$ that is a conditional expectation (completely positive idempotent onto $\cC_U$, in the sense of~\cite{jp1}).
In terms of compact quantum group actions, we have the following.

\begin{proposition}
Suppose $\cC = \Rep(G)$ for some compact quantum group $G$.
For a finite dimensional unitary representation $(H, U)$ of $G$, the underlying right C$^*$-$\Rep(G)$-module category of $\hat{\cC}_U$ corresponds to the $G$-C$^*$-algebra $\cR(\cB(H) \vnotimes \ell^\infty(\hat{G}))$, the regular part of the $G$-W$^*$-algebra $\cB(H) \vnotimes \ell^{\infty}(\hat{G})$.
\end{proposition}

\subsection{Multipliers and injectivity}

\begin{definition}
Let $\cM_1$ and $\cM_2$ be centrally pointed $\cC$-bimodule categories, with central generators $m_1$ and $m_2$.
A \emph{central ucp $\cC$-linear multiplier} $F\colon \cM_1 \to \cM_2$ is a ucp right $\cC$-linear multiplier $F = \{F_{U,V}\colon \cM_1( m_1 \stmryolt U, m_1 \stmryolt V) \to \cM_2(m_2 \stmryolt U, m_2 \stmryolt V) \}_{U,V}$ such that
\begin{equation}\label{eq:ucp-mult-for-centr-br-bimod}
  F_{UV,UW} \Sigma_{U;V,W} = \Sigma_{U;V,W}' F_{V,W} \quad (U,V,W \in \cC).
\end{equation}
We denote by $\CBOS(\cC)$ the category of centrally pointed $\cC$-bimodule categories and central ucp $\cC$-linear multipliers.
\end{definition}

\begin{remark}
In~\cite{YDpaper} we showed that the category of centrally pointed bimodule categories is equivalent, through trivialization of the central structures, to a category of pointed cyclic bimodules $(\cM, m)$ on which the left bimodule structure is an extension of $U \stmryogt (m \stmryolt V) = m \stmryolt U \stmryolt V$.
At the level of morphisms, the left action on such a trivialized category was given by
\[
  \id_U \stmryogt T = \Sigma_{U;V,W}(T) \quad (T \in \cM(m \stmryolt V, m \stmryolt W)).
\]
Equation~\eqref{eq:ucp-mult-for-centr-br-bimod} is saying, therefore, that $F$ is a ucp-multiplier of the corresponding bimodule categories with trivialized central structures.
\end{remark}

\begin{proposition}\label{prop:corresp-cent-br-ucp-mult-and-usual-mult}
Given $\cM \in \CBOS(\cC)$ and $\cM' \in \cMod_\cC(*)$, there is a bijective correspondence between the morphisms $ \cM \to \hat\cM'$ in $\CBOS(\cC)$ and the ucp-multipliers $\cM \to \cM'$ of pointed right $\cC$-modules.
\end{proposition}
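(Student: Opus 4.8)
The plan is to realize the bijection by passing between a morphism $F\colon \cM \to \hat\cM'$ and its value at the tensor unit. Write $\hat m'$ for the central generator of $\hat\cM'$, so that by Definition \ref{def:C-hat-U} we have $\hat\cM'(\hat m' \stmryolt U, \hat m' \stmryolt V) = \hat\cM'(U,V) \cong \ell^\infty\mhyph\prod_{X \in \Irr(\cC)} \cM'(m' \stmryolt X \otimes U, m' \stmryolt X \otimes V)$, and let $\mathrm{ev}_{1_\cC}$ denote the projection onto the $X = 1_\cC$ factor $\cM'(m' \stmryolt U, m' \stmryolt V)$. Given a morphism $F$ in $\CBOS(\cC)$, I would define a ucp-multiplier $\Theta\colon \cM \to \cM'$ by $\Theta_{U,V} = \mathrm{ev}_{1_\cC}\circ F_{U,V}$; conversely, given a ucp-multiplier $\Theta\colon \cM \to \cM'$, I would set
\[
F_{U,V}(T)_X = \Theta_{X\otimes U,\, X \otimes V}(\Sigma_{X;U,V}(T)) \qquad (X \in \cC).
\]

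First I would check that the two assignments are mutually inverse. Since $\sigma_{1_\cC} = \id$ we have $\Sigma_{1_\cC;U,V} = \id$, so $\mathrm{ev}_{1_\cC}\circ F_{U,V} = \Theta_{U,V}$ when $F$ is built from $\Theta$. For the other composite I would use the central-compatibility relation \eqref{eq:ucp-mult-for-centr-br-bimod} satisfied by $F$, which (because $\sigma' = \id$ on $\hat\cM'$) reads $F_{X\otimes U,\, X \otimes V}(\Sigma_{X;U,V}(T)) = \id_X \stmryogt F_{U,V}(T)$; evaluating at $1_\cC$ and using $(\id_X \stmryogt S)_Y = S_{Y\otimes X}$ in $\hat\cM'$, the right-hand side becomes $F_{U,V}(T)_X$, so $F$ is recovered from its unit component by exactly the displayed formula.

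It then remains to verify that each assignment lands in the correct category. The direction $F \mapsto \Theta$ is immediate: $\mathrm{ev}_{1_\cC}$ is ucp and compatible with the right $\cC$-action, as $(S \stmryolt \id_X)_{1_\cC} = S_{1_\cC}\stmryolt \id_X$, so composing with the multiplier $F$ yields a ucp-multiplier $\cM \to \cM'$. For $\Theta \mapsto F$ two of the four required points are routine. Complete positivity and unitality hold because $\Sigma_{X;U,U}$ is the $*$-homomorphism $T \mapsto (\sigma_X \stmryolt \id_U)(\id_X \stmryogt T)(\sigma_X^* \stmryolt \id_U)$, so each component $\Theta_{X\otimes U,\, X\otimes U}\circ \Sigma_{X;U,U}$ is ucp, hence so is the $\ell^\infty$-product $F_{U,U}$. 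The central compatibility \eqref{eq:ucp-mult-for-centr-br-bimod} amounts to the cocycle identity $\Sigma_{Y;\,X\otimes U,\,X\otimes V}\circ \Sigma_{X;U,V} = \Sigma_{Y\otimes X;\,U,V}$, which is precisely the braid relation $\sigma_{Y\otimes X} = (\sigma_Y \stmryolt \id_X)(\id_Y \stmryogt \sigma_X)$ restated through \eqref{eq:br-map}; equivalently, it is the strictness of the trivialized left action $\id_X \stmryogt T = \Sigma_{X;U,V}(T)$ from Section \ref{sec:centr-br-bimod-cat}.

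The main obstacle is the remaining pair of points, which I would handle together: showing that $X \mapsto F_{U,V}(T)_X$ is a \emph{bounded natural} family, so that it genuinely defines an element of $\Nat_b(m' \stmryolt \iota \otimes U, m' \stmryolt \iota \otimes V) = \hat\cM'(U,V)$, and that the resulting $F$ obeys the multiplier axiom of Definition \ref{multiplier}. Boundedness is clear since $\Theta$ is completely contractive and $\Sigma_{X;U,V}$ is conjugation by a unitary, so $\sup_X \norm{F_{U,V}(T)_X} \le \norm{T}$. Both the naturality in $X$ and the multiplier axiom reduce to the multiplier property of $\Theta$: applied to a morphism of the form $\phi \otimes \id$ for $\phi \in \cC(X, X')$ and combined with the naturality of the half-braiding $\sigma$ (through which $\phi$ intertwines $\Sigma_{X;U,V}$ with $\Sigma_{X';U,V}$), it produces exactly the required naturality and right $\cC$-linearity squares. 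This is the one step where the interaction of the multiplier axiom for $\Theta$ with the coherence of $\sigma$ must be unwound explicitly; I expect it to be cleanest in the trivialized picture of Section \ref{sec:centr-br-bimod-cat}, where the defining formula reads $F_{U,V}(T)_X = \Theta_{X\otimes U,\,X\otimes V}(\id_X \stmryogt T)$ and the conditions on $F$ become precisely those of a ucp right multiplier equivariant for the strict left $\cC$-actions.
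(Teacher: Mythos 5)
Your proposal is correct and follows essentially the same route as the paper: the paper defines $\hat F_{V,W}(T)_U = F_{UV,UW}(\Sigma_{U;V,W}(T))$ and $\check F_{V,W}(T) = F_{V,W}(T)_1$, verifies they are mutually inverse via $\sigma_{1_\cC} = \id$ and the relation \eqref{eq:ucp-mult-for-centr-br-bimod} evaluated at the unit, and derives centrality from the multiplicativity of $\sigma$ in $U$, exactly as you do. Your extra care with boundedness, naturality in $X$, and the multiplier axiom fills in steps the paper dispatches with ``natural in $U$ by construction,'' so it is a faithful (and slightly more detailed) rendering of the same argument.
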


\begin{proof}
Let $F$ be a ucp-multiplier $\cM \to \cM'$.
Given $T \in \cM(m \stmryolt V, m \stmryolt W)$ and $U \in \cC$, we define $\hat F_{V, W}(T)_U \in \cM'(m' \stmryolt U \stmryolt V, m' \stmryolt U \stmryolt W)$ by
\[
  \hat F_{V, W}(T)_U = F_{UV, UW}((\sigma_U \stmryolt \id_W) (\id_U \stmryogt T) (\sigma_U^{-1} \stmryolt \id_V)) = F_{UV,UW}( \Sigma_{U;V,W}(T)).
\]
This is natural in $U$ by construction, hence we get a multiplier $\hat F \colon \cM \to \hat\cM'$.
This is completely positive because $\sigma_U$ is unitary.

Moreover, $\Sigma'_{U; V, W}$ on $\hat\cM'$ is the left module structure map $S \mapsto \id_U \stmryogt S$.
Thus,~\eqref{eq:ucp-mult-for-centr-br-bimod} for $\hat F$ becomes
\[
  \hat F_{U V, U W}(\Sigma_{U; V, W}(T)) = \id_U \stmryogt \hat F_{V, W}(T) \quad (T \in \cM(m \stmryolt V, m \stmryolt W)).
\]
This follows from the multiplicativity of $\sigma_U$ in $U$.

In the other direction, given a ucp-multiplier $F \colon \cM \to \hat\cM'$ (satisfying~\eqref{eq:ucp-mult-for-centr-br-bimod}), we get a ucp-multiplier $\check F \colon \cM \to \cM'$ by $\check F_{V,W}(T) = F_{V, W}(T)_1$ for $T \in \cM(m \stmryolt  V, m \stmryolt W)$.

These correspondences are inverse to each other: since $\sigma_1 = \id_m$, if $F\colon \cM \to \cM'$ is a right-module ucp-multiplier, $G = \hat{F}$ satisfies
\[
  \check G_{V,W} = [\hat{F}_{V,W}]_1 = F_{V,W}.
\]
Conversely, suppose $F\colon \cM \to \hat{\cM}'$ is a central ucp-multiplier, and write $G = \check F$.
Then we have
\begin{multline*} [\hat G_{V,W}(T)]_U = \check{F}_{UV,UW}( \Sigma_{UV,UW}(T)) = F_{UV,UW}(\Sigma_{U;V,W}(T))_1 = \Sigma'_{U;V,W}(F_{V,W}(T))_1 \\ = [\id_U \stmryogt F_{V,W}(T)]_1 = [F_{V,W}(T)]_U
\end{multline*}
for $T \in \cM(m\stmryolt V, m \stmryolt W)$.
\end{proof}

Now, consider the notions of \emph{injectivity} and \emph{injective envelopes} for centrally pointed bimodule categories analogously to Definitions~\ref{def:mod-cat-inj} and~\ref{def:inj-envelope-mod-cat}, by restricting to central ucp-multipliers.
In other words, we consider only central ucp-extensions of central ucp-multipliers.

\begin{theorem}\label{centrallybraidedtheo2}
The categories $\hat{\cM}_{\cB(H)}$ are injective in $\CBOS(\cC)$ for $H \in \Hilb(\cC)$.
\end{theorem}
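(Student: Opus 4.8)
The plan is to deduce this from the injectivity of $\cM_{\cB(H)}$ in $\cMod_*(\cC)$ (Theorem \ref{thm:arveson-type-thm-for-mod-cat}) by transporting everything along the bijection of Proposition \ref{prop:corresp-cent-br-ucp-mult-and-usual-mult}. So suppose we are given a central ucp-multiplier $\Phi\colon \cN \to \hat\cM_{\cB(H)}$ in $\CBOS(\cC)$ together with a completely isometric central ucp-multiplier $\Psi\colon \cN \to \cN'$; the task is to produce a central ucp-multiplier $\tilde\Phi\colon \cN' \to \hat\cM_{\cB(H)}$ with $\tilde\Phi\Psi = \Phi$. First I would use Proposition \ref{prop:corresp-cent-br-ucp-mult-and-usual-mult} to replace $\Phi$ by the associated ucp-multiplier of pointed right $\cC$-modules $\check\Phi\colon (\cN, n) \to \cM_{\cB(H)}$.

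Next I would forget the central structure of $\Psi$ and view it as a ucp-multiplier of pointed right $\cC$-module categories $(\cN, n) \to (\cN', n')$; since the component maps $\Psi_{V,W}$ are unchanged, it stays completely isometric in the sense of Definition \ref{def:mod-cat-inj}. (As a multiplier only involves the morphism spaces $\cN(n \stmryolt V, n \stmryolt W)$, we may freely pass to the right $\cC$-submodule generated by $n$, so that Theorem \ref{thm:arveson-type-thm-for-mod-cat} applies.) Feeding the data $\check\Phi$ and $\Psi$ into the injectivity of $\cM_{\cB(H)}$ then yields a right-module ucp-multiplier $\eta\colon (\cN', n') \to \cM_{\cB(H)}$ with $\eta\Psi = \check\Phi$. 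Finally I would promote $\eta$ to a central ucp-multiplier $\hat\eta\colon \cN' \to \hat\cM_{\cB(H)}$ by the reverse direction of Proposition \ref{prop:corresp-cent-br-ucp-mult-and-usual-mult}, and set $\tilde\Phi = \hat\eta$.

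The remaining point, where the only real computation sits, is the verification that $\tilde\Phi\Psi = \Phi$. The correspondence $G \mapsto \check G$ is just evaluation at $U = 1_\cC$, namely $\check G_{V,W}(T) = [G_{V,W}(T)]_1$, and composition of multipliers is computed componentwise; hence for every central ucp-multiplier $G\colon \cN' \to \hat\cM_{\cB(H)}$ the right-module multiplier associated to $G\Psi$ is exactly $\check G\circ\Psi$. Taking $G = \tilde\Phi = \hat\eta$ and recalling $\check{\hat\eta} = \eta$ (which rests on $\sigma_{1_\cC} = \id_n$, as in the proof of Proposition \ref{prop:corresp-cent-br-ucp-mult-and-usual-mult}), I obtain that the right-module multiplier associated to $\tilde\Phi\Psi$ equals $\eta\Psi = \check\Phi$. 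Since $G \mapsto \check G$ is a bijection on central ucp-multipliers $\cN \to \hat\cM_{\cB(H)}$, and both $\tilde\Phi\Psi$ and $\Phi$ have image $\check\Phi$ under it, we conclude $\tilde\Phi\Psi = \Phi$.

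I do not expect a serious obstacle here: the substance is entirely carried by Theorem \ref{thm:arveson-type-thm-for-mod-cat}, and Proposition \ref{prop:corresp-cent-br-ucp-mult-and-usual-mult} reduces injectivity in $\CBOS(\cC)$ to injectivity in $\cMod_*(\cC)$. The one place demanding care is the compatibility of the bijection with precomposition by $\Psi$, which I reduce to the componentwise nature of multiplier composition together with the triviality of the half-braiding at the unit object; one should also confirm that forgetting the central structure preserves both the ucp and the complete-isometry properties, which holds since these concern only the component maps.
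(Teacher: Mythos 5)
Your proposal is correct and takes essentially the same route as the paper, whose entire proof consists of invoking Proposition \ref{prop:corresp-cent-br-ucp-mult-and-usual-mult} to identify central ucp-multipliers into $\hat\cM_{\cB(H)}$ with right-module ucp-multipliers into $\cM_{\cB(H)}$ and then citing Theorem \ref{thm:arveson-type-thm-for-mod-cat}. Your extra verifications --- that the bijection intertwines precomposition with $\Psi$ (via evaluation at $1_\cC$ and the componentwise nature of multiplier composition), and that forgetting the central structure preserves the ucp and complete-isometry properties --- correctly spell out details the paper leaves implicit.
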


\begin{proof}
By Proposition~\ref{prop:corresp-cent-br-ucp-mult-and-usual-mult}, the central ucp-multipliers $\cM' \to \hat\cM_{\cB(H)}$ bijectively correspond to the right module ucp-multipliers $\cM' \to \cM_{\cB(H)}$.
We then get the claim by Theorem~\ref{thm:arveson-type-thm-for-mod-cat}.
\end{proof}

As a consequence, any object in $\CBOS(\cC)$ embeds into a injective object.

\begin{remark}
As the above proof shows, $\hat\cM$ is injective as a centrally pointed bimodule category whenever $\cM$ is an injective pointed right $\cC$-module category.
\end{remark}

\begin{remark}
Consider a right $\cC$-module $C^*$-category $\cM$ and an embedding of $\cM$ into $\cM_{\cB(H)}$, for some $H \in \Hilb(\cC)$.
The composition
\[
  \cM \to \cM_{\cB(H)} \to \hat\cM_{\cB(H)}
\]
is an analogue of the \emph{Poisson transform} considered in~\citelist{\cite{kksv1}\cite{HHN1}}, which we call a {\em categorical Poisson transform}.
\end{remark}

\begin{definition}
A central bimodule functor $F \colon \cM \to \cN$ is said to be \emph{rigid} when $\Id_\cN$ is the only central ucp multiplier $G \colon \cN \to \cN$ satisfying $G F = F$.
\end{definition}

\begin{theorem}\label{thm:injective-envelope-cb-mod-cat}
Every object in $\CBOS(\cC)$ has an injective envelope $\cI_{\CBOS}(\cM)$.
Moreover the embedding $i \colon \cM \to \cI_{\CBOS}(\cM)$ is rigid.
\end{theorem}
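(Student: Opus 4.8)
The plan is to imitate Hamana's minimal--idempotent construction, carried out in the proofs of Theorems \ref{thm:inj-env-YD-G-alg} and \ref{thm:injective-envelopes-mod-cat}, but now inside $\CBOS(\cC)$. Let $m$ be the central generator of $\cM$. Regarding $(\cM, m)$ as a pointed right $\cC$-module category, Theorem \ref{thm:mod-cat-Gelfand-Naimark} gives a faithful representation of the associated C$^*$-algebra object into $\cB(H)$ for some $H \in \Hilb(\cC)$, that is, a completely isometric right-module ucp-multiplier $\iota\colon \cM \to \cM_{\cB(H)}$. Applying the bijection of Proposition \ref{prop:corresp-cent-br-ucp-mult-and-usual-mult} to $\iota$ produces its categorical Poisson transform $i_0\colon \cM \to \hat\cM_{\cB(H)} =: \cN$, a central ucp-multiplier with components $(i_0)_{V,W}(T)_U = \iota_{U \otimes V, U \otimes W}(\Sigma_{U;V,W}(T))$. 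Since the norm in $\hat\cM_{\cB(H)}$ is the supremum over $U$ of the component norms, the $U = 1_\cC$ component equals $\iota_{V,W}$, and each $\Sigma_{U;V,W}$ is isometric while $\iota$ is completely isometric, the map $i_0$ is completely isometric; moreover $\cN$ is injective in $\CBOS(\cC)$ by Theorem \ref{centrallybraidedtheo2}.

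Next I would form the set $S$ of central ucp-multipliers $\phi\colon \cN \to \cN$ satisfying $\phi\, i_0 = i_0$. This is a convex semigroup of contractions of the dual Banach space $\ell^\infty\mhyph\prod_{U,V} \cN(U,V)$, closed for the topology of pointwise weak$^*$ convergence: the ucp condition passes to such limits, while the centrality equation \eqref{eq:ucp-mult-for-centr-br-bimod} and the relation $\phi\, i_0 = i_0$ are linear in $\phi$, hence stable. Proposition \ref{minimalidemp} then yields an idempotent $\phi_0 \in S$ with $\phi_0 \chi \phi_0 = \phi_0$ for every $\chi \in S$. I define $\cI_{\CBOS}(\cM) = \phi_0(\cN)$ with the Choi--Effros composition $a \cdot b = \phi_0(a b)$; by the image construction recorded after Theorem \ref{thm:injective-envelopes-mod-cat} (applied to bimodule, equivalently $\cC \boxtimes \cC^{\op}$-module, categories) this is a centrally pointed bimodule category, the half-braiding of $\cN$ descending to it because $\phi_0$ is central. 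Since $\phi_0\, i_0 = i_0$, the map $i := i_0$ lands in $\cI_{\CBOS}(\cM)$ and is completely isometric, so $(\cI_{\CBOS}(\cM), i)$ is a faithful extension of $\cM$ inside $\CBOS(\cC)$.

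Injectivity of $\cI_{\CBOS}(\cM)$ follows by the standard conditional--expectation trick: given a central completely isometric $\Psi\colon \cK \to \cK'$ and a central ucp-multiplier $\Phi\colon \cK \to \cI_{\CBOS}(\cM)$, I regard $\Phi$ as a central ucp-multiplier into $\cN$, extend it along $\Psi$ to $\tilde\Phi_0\colon \cK' \to \cN$ using the injectivity of $\cN$, and set $\tilde\Phi = \phi_0 \tilde\Phi_0$; as $\phi_0$ restricts to the identity on $\cI_{\CBOS}(\cM)$ we get $\tilde\Phi \Psi = \phi_0 \Phi = \Phi$. For the essentiality/rigidity clause, let $\psi\colon \cI_{\CBOS}(\cM) \to \cI_{\CBOS}(\cM)$ be a central ucp-multiplier with $\psi\, i = i$. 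Then $\chi := \psi \phi_0$ belongs to $S$, so the idempotent identity gives $\phi_0 \psi \phi_0 = \phi_0 \chi \phi_0 = \phi_0$; evaluating at $x = \phi_0(y) \in \cI_{\CBOS}(\cM)$ and using $\psi(x) \in \cI_{\CBOS}(\cM)$ yields $\psi(x) = \phi_0 \psi \phi_0(x) = \phi_0(x) = x$. Hence $\psi = \Id$, which is exactly the rigidity of $i$ and the uniqueness clause in the definition of an injective envelope.

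The genuinely routine points are that compositions of central ucp-multipliers are again central ucp-multipliers and that $\phi_0(\cN)$ inherits the bimodule (and central) structure; these are guaranteed by Proposition \ref{prop:corresp-cent-br-ucp-mult-and-usual-mult} and the remark following Theorem \ref{thm:injective-envelopes-mod-cat}. The step demanding the most care is keeping everything within $\CBOS(\cC)$: one must verify that the Poisson transform $i_0$ is central and completely isometric, and that $S$ is a weak$^*$-closed semigroup of central maps, so that the minimal idempotent $\phi_0$ is itself a central ucp-multiplier whose image is a centrally pointed bimodule category. Once centrality is in place, the essentiality is a purely formal consequence of the relation $\phi_0 \chi \phi_0 = \phi_0$, with no further use of minimality beyond the existence of the idempotent.
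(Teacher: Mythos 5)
Your proposal is correct and follows essentially the same route as the paper's proof: embed $\cM$ via the categorical Poisson transform into the injective object $\hat{\cM}_{\cB(H)}$, apply Proposition \ref{minimalidemp} to the weak$^*$-closed convex semigroup of central ucp multipliers fixing the image of $\cM$, and realize $\cI_{\CBOS}(\cM)$ as the image of the resulting idempotent with the Choi--Effros product, centrality of the idempotent ensuring the centrally pointed bimodule structure descends. The only (immaterial) variation is in the rigidity step, where you invoke the absorbing identity $\phi_0 \psi \phi_0 = \phi_0$ rather than the paper's norm-comparison via minimality ($\norm{\Psi\Phi(T)} \le \norm{\Phi(T)}$ forcing $\Psi\Phi = \Phi$); both properties are furnished by the same Proposition \ref{minimalidemp}, so the arguments are interchangeable.
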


\begin{proof}
The first claim can be proved with a slight modification of the proof of Theorem~\ref{thm:injective-envelopes-mod-cat}.
Start with a centrally pointed bimodule category $(\cM,m)$, and embed it into $\hat{\cM}_{\cB(H)}$ for some $H \in \Hilb(\cC)$.

Then consider the semigroup $S$ of central ucp multipliers from $\hat{\cM}_{\cB(H)}$ to itself that are identity on the image of $\cM$.
This is still a closed convex set for the weak$^*$ topology with respect to the presentation~\eqref{eq:hat-M-morphism-set}, ensuring then the existence of a minimal idempotent $\Phi \in S$.
Consider the image of $\Phi$, endowed with the Choi--Effros product.
Since $\Phi$ is central, its image inherits the centrally pointed bimodule structure of $\hat{\cM}_{\cB(H)}$.
This is a model of $\cI = \cI_{\CBOS}(\cM)$.

The rest of the proof is also a close analogue of the usual one for injective envelope of C$^*$-algebras.
Let $\Psi\colon \cI \to \cI$ be a central ucp multiplier with $\Psi i = i$, and put $\Phi' = \Psi \Phi$.
Then by the complete contractivity of $\Psi$, we have
\[
  \norm{\Phi'(T)} \le \norm{\Phi(T)}
\]
for any $U, V \in \cC$ and any $T \in \hat{\cM}_{\cB(H)}(U, V)$.
By the minimality of $\Phi$, we obtain $\Phi' = \Phi$, which means $\Psi = \Id_\cI$.
\end{proof}

Now the parallel between Section~\ref{sec:inj-env-YD-G-algs} up to the correspondence from Theorem~\ref{thm:duality-YD-alg-centr-bimods} should be clear.
The role of $\cR(\cB(H) \otimes \ell^\infty (\hat{G}))$ is played by $\hat{\cM}_{\cB(H)}$, and in the case of $\cC = \Rep(G)$ and $\cM = \cD_A$ for a Yetter--Drinfeld $G$-C$^*$-algebra $A$ with an equivariant representation $A \to \cB(H)$, the constructions really agree.

\section{Boundary theory for centrally pointed bimodule categories}\label{sec:boundarytheory}

Let us quickly recall the relevant concepts from~\cite{HHN1}: A \emph{$\cC$-tensor category} is a C$^*$-tensor category $\cD$ endowed with a dominant faithful C$^*$-tensor functor $F \colon \cC \to \cD$.
(For simplicity we assume that $\cC$ is a subcategory of $\cD$, and that any object of $\cD$ is a subobject of some $U \in \cC$.)
A \emph{$\cC$-linear transformation} $\Theta \colon \cD_1 \to \cD_2$ between $\cC$-tensor categories is given by a family of linear maps
\[
  \Theta_{U, V}\colon \cD_1(U, V) \to \cD_2(U, V) \quad (U, V \in \cC)
\]
satisfying
\begin{gather*}
  \Theta_{U_2, V_1}(S_1 T S_2) = S_1 \Theta_{V_2, U_1}(T) S_2 \quad (T \in \cD_1(V_2, U_1), S_i \in \cC(U_i, V_i)),\\
  \Theta_{Z \otimes U \otimes Y, Z \otimes V \otimes Y}(\id_Z \otimes T \otimes \id_Y) = \id_Z \otimes \Theta_{U,V}(T) \otimes \id_Y \quad (T \in \cD_1(U, V)).
\end{gather*}
A $\cC$-linear transformation $\Theta$ is \emph{ucp} if $\Theta_{U, U}$ is ucp for all $U \in \cC$.
The \emph{$\cC$-injectivity} for $\cC$-tensor categories is defined using this class of maps.
Finally, the \emph{Furstenberg--Hamana boundary} $\partial_\FH(\cC)$ of $\cC$ is an injective $\cC$-tensor category such that any $\cC$-linear transformation $\partial_\FH(\cC) \to \cD$ is completely isometric.
Concretely, $\partial_\FH(\cC)$ can be realized inside $\hat\cC$ using Proposition~\ref{minimalidemp} applied to the semigroup of $\cC$-linear ucp transformations from $\hat\cC$ to itself.

Now, observe that any $\cC$-tensor category $\cD$ has a canonical structure of centrally pointed $\cC$-bimodule category: as the generating object we take the tensor unit, and the central structure $\sigma_U \colon U \otimes 1_\cC \to 1_\cC \to U$ is given by the composition of structure morphisms of the monoidal unit.
Under this correspondence, a ucp $\cC$-linear transformations between $\cC$-tensor categories is exactly a ucp central $\cC$-linear transformation.
This motivates the following definition of boundary bimodules.

\begin{definition}\label{def:boundarycategories}
A centrally pointed $\cC$-bimodule category $\cM$ is called a \emph{boundary} category when any central ucp multiplier $\cM \to \cN$ to any centrally pointed bimodule category $\cN$ is completely isometric.
\end{definition}

\begin{theorem}\label{thm:charac-bnd}
Every boundary $\cM$ is a centrally pointed subcategory of $\partial_\FH(\cC)$.
\end{theorem}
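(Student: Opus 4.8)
The plan is to identify the Furstenberg--Hamana boundary $\partial_\FH(\cC)$ with the injective envelope of the trivial centrally pointed bimodule category $(\cC,1_\cC)$ in $\CBOS(\cC)$, to show that the injective envelope of an arbitrary boundary $\cM$ is again an injective envelope of $(\cC,1_\cC)$, and then to invoke uniqueness of injective envelopes.

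First I would prove that $\partial_\FH(\cC) \cong \cI_{\CBOS}(\cC,1_\cC)$. By definition $\partial_\FH(\cC)$ is the image of a minimal idempotent, in the sense of Proposition \ref{minimalidemp}, of the semigroup of $\cC$-linear ucp transformations $\hat\cC \to \hat\cC$. On the other hand, $\hat\cC \cong \hat\cM_{\cB(H)}$ for $H$ the Yoneda image of $1_\cC$ (cf.\ Proposition \ref{prop:finite-dim-Hilbert-spaces} and Definition \ref{def:C-hat-U}), so the construction underlying Theorem \ref{thm:injective-envelope-cb-mod-cat} realizes $\cI_{\CBOS}(\cC,1_\cC)$ as the image of a minimal idempotent among the central ucp multipliers $\hat\cC \to \hat\cC$ fixing $(\cC,1_\cC)$. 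The crucial point is that these two classes of maps coincide: on the $\cC$-tensor category $\hat\cC$ the central ucp multipliers are exactly the ucp $\cC$-linear transformations (the remark preceding the theorem), and every such transformation automatically fixes $(\cC,1_\cC)$ by $\cC$-bilinearity and unitality. Thus the two minimal idempotents lie in the same semigroup, and Proposition \ref{minimalidemp} yields the identification. In particular $\partial_\FH(\cC)$ is an injective envelope of $(\cC,1_\cC)$ in $\CBOS(\cC)$, using that $\hat\cC$ is injective there (Theorem \ref{centrallybraidedtheo2}).

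Next, given a boundary category $\cM$, I would form the chain $(\cC,1_\cC) \hookrightarrow \cM \xrightarrow{\,i\,} \cI_{\CBOS}(\cM)$, where the first arrow is the faithful central functor of Proposition \ref{prop:C-embeds-into-half-braided-mod-cat} and $i$ is the rigid embedding into the injective envelope (Theorem \ref{thm:injective-envelope-cb-mod-cat}). I claim $\cI_{\CBOS}(\cM)$ is itself a boundary: any central ucp multiplier $\phi\colon \cI_{\CBOS}(\cM) \to \cN$ restricts along $i$ to a central ucp multiplier $\phi i\colon \cM \to \cN$, which is completely isometric because $\cM$ is a boundary; since $i$ is essential, $\phi$ is completely isometric on $i(\cM)$ and hence completely isometric. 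Consequently $\cI_{\CBOS}(\cM)$ is an injective object that is an essential extension of $(\cC,1_\cC)$, i.e.\ an injective envelope of $(\cC,1_\cC)$. By uniqueness of injective envelopes there is an isomorphism of centrally pointed bimodule categories $\cI_{\CBOS}(\cM) \cong \partial_\FH(\cC)$ compatible with the two copies of $(\cC,1_\cC)$, and composing $i$ with it exhibits $\cM$ as a centrally pointed subcategory of $\partial_\FH(\cC)$.

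The main obstacle is the first step: verifying that the Furstenberg--Hamana boundary, originally constructed among $\cC$-tensor categories and $\cC$-linear transformations, is genuinely the injective envelope of $(\cC,1_\cC)$ in the larger category $\CBOS(\cC)$. This rests entirely on the coincidence of the two classes of morphisms on $\hat\cC$ together with the injectivity of $\hat\cC$ in $\CBOS(\cC)$. A secondary point, requiring the standard Hamana machinery, is that uniqueness of the injective envelope must produce a genuine isomorphism of C$^*$-bimodule categories, i.e.\ a multiplicative central bimodule functor rather than merely a completely isometric ucp multiplier, so that the resulting map $\cM \to \partial_\FH(\cC)$ is a faithful functor realizing an honest subcategory; this holds because the relevant C$^*$-structures are the Choi--Effros products, which the uniqueness isomorphism respects.
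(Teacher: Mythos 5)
Your proposal is correct, but it takes a genuinely different route from the paper. The paper argues directly: it embeds $\cC$ centrally into $\cM$ (Proposition \ref{prop:C-embeds-into-half-braided-mod-cat}), extends along this embedding by injectivity of $\partial_\FH(\cC)$ to get a central ucp multiplier $i\colon \cM \to \partial_\FH(\cC)$ that is completely isometric by the boundary property, then embeds $\cM$ into some $\hat{\cM}_{\cB(H)}$, extends back and forth to obtain $\Phi\colon \partial_\FH(\cC) \to \hat{\cM}_{\cB(H)}$ and $\Psi\colon \hat{\cM}_{\cB(H)} \to \partial_\FH(\cC)$, uses the minimal-idempotent rigidity of $\partial_\FH(\cC)$ to force $\Psi\Phi = \Id$, and finally checks by hand that the Choi--Effros product on the image of the conditional expectation $E = \Phi\Psi$ restricts to the original composition on the image of $\cM$, so that $\cM$ sits inside $\Phi(\partial_\FH(\cC)) \cong \partial_\FH(\cC)$ as a centrally pointed subcategory. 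You instead first prove the identification $\partial_\FH(\cC) \cong \cI_{\CBOS}(\cC,1_\cC)$ --- and your key verification is sound: central ucp multipliers on $\hat{\cC}$ are exactly the ucp $\cC$-linear transformations, and unitality together with $\cC$-bimodularity applied to $S = S \circ \id_U$ shows every such map fixes the copy of $\cC$, so the two semigroups entering Proposition \ref{minimalidemp} literally coincide --- then show $\cI_{\CBOS}(\cM)$ is itself a boundary and hence an injective envelope of $(\cC,1_\cC)$, and conclude by uniqueness of envelopes. The underlying engine is the same (Proposition \ref{minimalidemp}, back-and-forth extensions, rigidity forcing a composite to be the identity, Choi--Effros multiplicativity), but your packaging buys two conceptual statements the paper never makes explicit: $\partial_\FH(\cC)$ is the injective envelope of the trivial object in $\CBOS(\cC)$, and $\cI_{\CBOS}(\cM) \cong \partial_\FH(\cC)$ for every boundary $\cM$; the paper's direct route is more economical precisely because it avoids developing a uniqueness theory for envelopes in $\CBOS(\cC)$.

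Two ingredients you lean on are not proved in the paper and should be supplied. First, essentiality of $i\colon \cM \to \cI_{\CBOS}(\cM)$: Theorem \ref{thm:injective-envelope-cb-mod-cat} only gives rigidity, but rigidity upgrades to essentiality by embedding the target into an injective object (available by Theorem \ref{centrallybraidedtheo2}) and extending back, exactly as in the proof of Theorem \ref{thm:inj-env-YD-G-alg}. Second, uniqueness of injective envelopes with a multiplicative comparison map: note that rigidity of $\cI_{\CBOS}(\cM)$ over $(\cC,1_\cC)$ is not directly available, but the argument can be run asymmetrically --- extend over $\cC$ to get $\theta\colon \cI_{\CBOS}(\cM) \to \partial_\FH(\cC)$ and $\theta'$ in the other direction; rigidity of $\partial_\FH(\cC)$ from its minimal-idempotent construction gives $\theta\theta' = \Id$, and since $\theta$ is completely isometric (as $\cI_{\CBOS}(\cM)$ is a boundary, by your second step), injectivity of $\theta$ yields $\theta'\theta = \Id$; multiplicativity of $\theta$ for the Choi--Effros structures then follows from the multiplicative-domain argument already used in the proof of Theorem \ref{thm:injective-envelopes-mod-cat}. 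With these standard points discharged, your proof is complete.
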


\begin{proof}
By Proposition~\ref{prop:C-embeds-into-half-braided-mod-cat}, we have central embedding of $\cC$ into $\cM$.
Since $\cM$ is a boundary and $\partial_{FH}(\cC)$ is injective, we get a completely isometric central ucp multiplier $i\colon \cM \to \partial_{FH}(\cC)$.

Take an embedding $\pi$ of $\cM$ to $\hat{\cM}_{\cB(H)}$ for some $H \in \Hilb(\cC)$, as a centrally pointed bimodule category.
By the injectivity of $\hat{\cM}_{\cB(H)}$, the multiplier $i$ extends to a ucp central multiplier $\Phi\colon \partial_\FH(\cC) \to \hat{\cM}_{\cB(H)}$.
Conversely, by the injectivity of $\partial_{FH}(\cC)$, we also get a ucp central map $\Psi\colon \hat{\cM}_{\cB(H)} \to \partial_\FH(\cC)$.

The composition $\Psi \Phi$ is a $\cC$-linear ucp transformation on $\partial_\FH(\cC)$.
Thus, $\Phi$ must be completely isometric.
Moreover, by the above construction of $\partial_{FH}\cC$ via Proposition~\ref{minimalidemp}, $\Psi \Phi$ must be given by the identity functor.
This implies that $E = \Phi \Psi$ is a conditional expectation on $\hat{\cM}_{\cB(H)}$, with image $\Phi(\partial_\FH(\cC))$.
As before, the Choi--Effros product
\[
  \Phi(g) \cdot \Phi(f) = E(\Phi(g) \Phi(f))
\]
turns this image into a category.
Since we worked with central ucp central multipliers throughout, this category is again centrally pointed.

By construction $\pi(\cM)$ is contained in $\Phi(\partial_{FH}(\cC))$.
Moreover, the Choi--Effros product coincides with the original product of $\pi(\cM)$, by
\[
  \pi(g) \cdot \pi(f) = E(\pi(g) \pi(f)) = \Phi( \Psi(\pi(g f))) = \Phi(i(g f)) = \pi(g f) = \pi(g) \pi(f).
\]
Thus $\pi$ defines a functor $\cM \to \Phi(\partial_\FH(\cC))$, which can be upgraded to a central functor of centrally pointed bimodules.
By the boundary property of $\cM$, this must be an embedding.
\end{proof}

\begin{corollary}
The boundary objects in $\CBOS(\cC)$ are exactly the centrally pointed $\cC$-bimodule subcategories of $\partial_\FH(\cC)$.
\end{corollary}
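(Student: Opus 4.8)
The statement is an equivalence, and one inclusion is already done: every boundary embeds as a centrally pointed subcategory of $\partial_\FH(\cC)$ by Theorem~\ref{thm:charac-bnd}. So the plan is to prove the converse, that an arbitrary centrally pointed $\cC$-bimodule subcategory $\cM \subseteq \partial_\FH(\cC)$ is a boundary. Write $\iota\colon \cM \hookrightarrow \partial_\FH(\cC)$ for the inclusion, which is a completely isometric central ucp multiplier since $\cM$ carries the inherited morphism norms. Fix a central ucp multiplier $\Phi\colon \cM \to \cN$; I must show it is completely isometric. First I would choose $H \in \Hilb(\cC)$ large enough that both $\cN$ and $\partial_\FH(\cC)$ embed completely isometrically into the injective object $\cI = \hat{\cM}_{\cB(H)}$ (possible by Theorem~\ref{centrallybraidedtheo2} together with the fact that any object of $\CBOS(\cC)$ embeds into such an injective, amalgamating two choices of Hilbert space object into their direct sum). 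Composing $\Phi$ with $\cN \hookrightarrow \cI$ gives a central ucp multiplier $\hat\Phi\colon \cM \to \cI$, and since $\cI$ is injective $\hat\Phi$ extends along $\iota$ to a central ucp multiplier $\tilde\Phi\colon \partial_\FH(\cC) \to \cI$ with $\tilde\Phi \iota = \hat\Phi$.

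The heart of the argument is then to show $\tilde\Phi$ is completely isometric, which amounts to saying that $\partial_\FH(\cC)$ is itself a boundary. Using the completely isometric inclusion $\partial_\FH(\cC) \hookrightarrow \cI$ and the injectivity of $\partial_\FH(\cC)$, I would extend the identity to obtain a central ucp retraction $r\colon \cI \to \partial_\FH(\cC)$. The composite $r\tilde\Phi\colon \partial_\FH(\cC) \to \partial_\FH(\cC)$ is a central ucp multiplier of $\partial_\FH(\cC)$ into itself, so by the rigidity coming from the realization of $\partial_\FH(\cC)$ as the image of a minimal idempotent (Proposition~\ref{minimalidemp}), exactly as invoked in the proof of Theorem~\ref{thm:charac-bnd} to force a self-transformation to be the identity, it must equal $\Id_{\partial_\FH(\cC)}$. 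Thus $r$ is a completely contractive left inverse of $\tilde\Phi$, whence $\tilde\Phi$ is completely isometric. It follows that $\hat\Phi = \tilde\Phi\iota$ is completely isometric, and therefore so is $\Phi$, because $\cN \hookrightarrow \cI$ preserves norms at every matrix level. Hence every central ucp multiplier out of $\cM$ is completely isometric and $\cM$ is a boundary, giving the remaining inclusion and the stated equality.

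The main obstacle is the rigidity step, and more precisely bridging the gap between rigidity as recalled---stated for $\cC$-linear transformations into $\cC$-tensor categories---and the central ucp multipliers into an arbitrary bimodule target required by the boundary property. The device that resolves this is to factor every such multiplier through an injective $\hat{\cM}_{\cB(H)}$ and to use that any ucp self-map of $\partial_\FH(\cC)$ is forced to be the identity; this promotes an arbitrary ucp retraction $r$ into a genuine left inverse and converts complete contractivity into complete isometry. I would be careful to verify that $\partial_\FH(\cC)$ is injective in the sense of $\CBOS(\cC)$ and behaves as a retract inside $\hat{\cM}_{\cB(H)}$ (both already implicit in the boundary-theory recap and in the proof of Theorem~\ref{thm:charac-bnd}), and that the two embeddings into objects of the form $\hat{\cM}_{\cB(H)}$ can be arranged to share a common injective target by passing to a direct sum of the underlying Hilbert space objects.
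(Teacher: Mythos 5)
Your overall strategy is correct and is essentially the argument the paper leaves implicit: one inclusion is Theorem \ref{thm:charac-bnd}, and for the converse you factor a given central ucp multiplier $\Phi \colon \cM \to \cN$ through an injective $\hat\cM_{\cB(H)}$, extend it over the inclusion $\cM \hookrightarrow \partial_\FH(\cC)$, and then use a central ucp map back into $\partial_\FH(\cC)$ together with the rigidity coming from Proposition \ref{minimalidemp} (equivalently, the defining complete-isometry property of $\partial_\FH(\cC)$ for $\cC$-linear ucp transformations) to force complete isometry. This is exactly the mechanism used in the proof of Theorem \ref{thm:charac-bnd}.

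There is, however, one step that fails as you stated it: the amalgamation of the two embeddings into a common injective target $\cI = \hat\cM_{\cB(H_0 \oplus H_1)}$. The natural map $\hat\cM_{\cB(H_0)} \to \hat\cM_{\cB(H_0 \oplus H_1)}$ induced by the isometry $H_0 \to H_0 \oplus H_1$ is a corner embedding $T \mapsto p^* T p$, which is completely isometric but \emph{not unital}, whereas Definition \ref{def:mod-cat-inj} (and hence every extension you perform, including the one producing your retraction $r$) only permits extensions along maps that are simultaneously ucp and completely isometric. So you cannot, without additional work, regard $\partial_\FH(\cC) \subset \hat\cC$ as sitting unitally and completely isometrically inside $\cI$, and $r$ is not available as constructed. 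The repair is cheap and in fact removes the need for a common target altogether: by Proposition \ref{prop:C-embeds-into-half-braided-mod-cat} the composite $\cC \hookrightarrow \cN \hookrightarrow \cI$ is a unital central embedding, and $\partial_\FH(\cC)$ is injective in $\CBOS(\cC)$ (it is the image of a central ucp idempotent on $\hat\cC$, which is injective by Theorem \ref{centrallybraidedtheo2}; this injectivity is already invoked in the proof of Theorem \ref{thm:charac-bnd}), so the central embedding $\cC \hookrightarrow \partial_\FH(\cC)$ extends to a central ucp multiplier $r \colon \cI \to \partial_\FH(\cC)$. Then $r\tilde\Phi$ is a central, equivalently $\cC$-linear, ucp self-transformation of $\partial_\FH(\cC)$, hence equal to $\Id$ by the minimal-idempotent rigidity (and in any case completely isometric by the Furstenberg--Hamana boundary property), after which the rest of your argument goes through verbatim: $\tilde\Phi$, hence $\tilde\Phi\iota$, hence $\Phi$ is completely isometric.
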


\subsection{Monoidal invariance of \texorpdfstring{$D(G)$}{D(G)}-boundary actions}

\begin{proposition}[cf.~\cite{HHN1}, Proposition 4.5]
Let $G$ be a reduced compact quantum group.
Let $A_1$ and $A_2$ be continuous Yetter--Drinfeld $G$-C$^*$-algebras.
Denote be $(\cM_i,m_i)$ the cyclic centrally pointed bimodule C$^*$-category corresponding to $A_i$.
A ucp $G$-equivariant map $\phi\colon A_1 \to A_2$ is $\hat{G}$-equivariant if and only if the corresponding right ucp multiplier $\Phi\colon (\cM_1,m_1) \to (\cM_2,m_2)$ is a central ucp multiplier.
\end{proposition}

\begin{proof}
Suppose $\phi$ is $\hat{G}$-equivariant.
Since the right $\Rep(G)$-module structure on the categories $\cM_i = \cD_{A_i}$ are given naturally in terms of the $\cO(G)$-module structure on $A_i$ as in~\eqref{eq:right-RepG-mod-str}, we have the centrality of $\Phi$.
Conversely, if $\Phi$ is central, equation~\eqref{eq:OG-action-from-centr-bimod} implies that $\phi$ should be a $\cO(G)$-homomorphism.
\end{proof}

\begin{theorem}\label{thm:invarianceofboundarytheory}
If $G$ and $G'$ are monoidally equivalent reduced compact quantum groups, then the categories of boundary actions of the Drinfeld double $D(G)$ of $G$ is equivalent to the category of boundary actions of $D(G')$, the Drinfeld double of $G'$.
\end{theorem}

\begin{proof}
Given continuous Yetter--Drinfeld $G$-C$^*$-algebras $A_1$ and $A_2$ and a $D(G)$-equivariant ucp map $\phi\colon A_1 \to A_2$, the corresponding central ucp-multiplier $\Phi\colon (\cM_1,m_1) \to (\cM_2,m_2)$ is completely isometric if and only if $\phi$ is completely isometric.
Indeed, observing that for this statement it is enough to regard $\phi$ as a $G$-equivariant ucp map and $\Phi$ as a right ucp multiplier, this statement is already proven in~\cite{HHN1}*{Proposition 4.3}.

By the above proposition, a continuous Yetter--Drinfeld C$^*$-algebra $A$ is a $D(G)$-boundary action if and only if the corresponding cyclic centrally pointed bimodule $(\cM,m)$ is a boundary in the sense of Definition~\ref{def:boundarycategories}.
Therefore, the category of $D(G)$-boundary actions is equivalent to the category of $\Rep(G)$-boundary categories.
The latter is manifestly an invariant of $\Rep(G)$.
\end{proof}

\appendix

\section{Intrinsic characterization of \texorpdfstring{C$^*$}{C*}-algebra objects}\label{app:intr-char-C-star-alg}

Definition~\ref{def:c-star-alg-obj} does not give an intrinsic condition on C$^*$-algebra objects.
Here we give an intrinsic characterization of such structures, without directly referring to the corresponding module category.

When $A$ is an algebra object in $\Vect(\cC)$, let us denote the product of $a \in A(U)$ and $b \in A(V)$ by $a \cdot b \in A(V \otimes U)$.
We also write $A_0 = A(1_\cC)$.

Now, let $A$ be a \emph{$*$-algebra object} in $\Vect(\cC)$~\cite{jp1}.
This means that there is a family of conjugate linear maps
\[
  A(U) \to A(\bar U), \quad a \mapsto a^\natural
\]
(denoted by $j_U$ in~\cite{jp1}) satisfying the following conditions:
\begin{itemize}
\item the naturality
  \begin{equation}\label{eq:dagger-alg-obj-naturality}
    A(T)(a)^\natural = A(T^{* \vee})(a^\natural) \quad (T \in \cC(U, V), a \in A(V));
  \end{equation}
\item the involutivity $(a^\natural)^\natural = a$ up to the canonical isomorphism $\bar{\bar U} \simeq U$;
\item the unitality\footnote{There seems to be a typo in~\cite{jp1}.} $1_{A_0}^\natural = 1_{A_0}$ in $A_0$ up to the canonical choice $\bar 1_\cC = 1_\cC$; and
\item the antimultiplicativity $a^\natural \cdot b^\natural = (b \cdot a)^\natural$ up to the natural isomorphism $\bar U \otimes \bar V \simeq \overline{V \otimes U}$.
\end{itemize}
Here, we recall that $T^{* \vee} \in \cC(\bar U, \bar V)$ is the morphism characterized by
\[
  (\id_U \otimes T^{* \vee}) \bar R_U = (T^* \otimes \id_{\bar V}) \bar R_V.
\]

Such a structure induces a dagger category structure on the associated module category $\cM_A$.
In particular, $A(U \otimes \bar{U}) \simeq \cM_A(U_A, U_A)$ is a $*$-algebra for every $U \in \cC$.
First note that we have $*$-algebra embeddings
\[
  j_{U,V}\colon A(U \otimes \bar U) \to A(U \otimes V \otimes \bar V \otimes \bar U), \quad a \mapsto A(\id_U \otimes \bar R_V^* \otimes \id_{\bar U})(a),
\]
corresponding to
\[
  \cM_A(U_A, U_A) \to \cM_A(U_A \stmryolt V, U_A \stmryolt V), \quad T \mapsto T \stmryolt \id_V.
\]
This has a left inverse $a \mapsto d_V^{-1}
  A(\id_U \otimes \bar R_V \otimes \id_{\bar U})(a)$.
Moreover,
\begin{equation}\label{eq:inn-prod-for-star-alg-obj}
  \linnprod{A_0}{a}{b} = A(R_U)(a \cdot b^\natural)
\end{equation}
defines an $A_0$-valued inner product on the left $A_0$-module $A(U)$.

\begin{definition}
A \emph{pre-C$^*$-algebra object} in $\Vect(\cC)$ is a $*$-algebra object $A$ such that $A_0$ is a C$^*$-algebra and the inner product~\eqref{eq:inn-prod-for-star-alg-obj} is Hermitian, i.e., $\linnprod{A_0}{a}{a}$ is a positive element of $A_0$ for any $a \in A(U)$.
\end{definition}

Let us now assume that $A$ is a pre-C$^*$-algebra object.
By taking fiberwise completion, we may assume that $A(U)$ is a right Hilbert module over $A_0$.
Our goal is to see that this is a C$^*$-algebra object.

\begin{proposition}\label{prop:continuity-pre-C-star-alg-obj}
Let $A$ be a pre-C$^*$-algebra object in $\Vect(\cC)$.
Then we have
\[
  \norm{A(T)(a)} \le \norm{T} \norm{a} \quad (T \in \cC(V, U), a \in A(U))
\]
for the norm of right Hilbert $A_0$-modules.
Similarly, we have
\[
  \norm{a \cdot b} \le \norm{a} \norm{b} \quad (a \in A(U), b \in A(V)).
\]
\end{proposition}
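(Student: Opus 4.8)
The plan is to regard each fiber $A(U)$, after fiberwise completion, as a left Hilbert $A_0$-module via the inner product \eqref{eq:inn-prod-for-star-alg-obj}, so that $\norm{a}^2=\norm{\linnprod{A_0}{a}{a}}$, and to prove both bounds by controlling the relevant $A_0$-valued inner products in the order of positive elements of the C$^*$-algebra $A_0$. The technical core is the claim that, for $T\in\cC(V,U)$, the map $A(T)\colon A(U)\to A(V)$ is adjointable with $A(T)^*=A(T^*)$. To see this I would compute, for $a\in A(U)$ and $c\in A(V)$,
\[
\linnprod{A_0}{A(T)(a)}{c}=A\big((\id_{\bar V}\otimes T)R_V\big)(a\cdot c^\natural),\qquad
\linnprod{A_0}{a}{A(T^*)(c)}=A\big(((T^*)^{*\vee}\otimes\id_U)R_U\big)(a\cdot c^\natural),
\]
using the naturality \eqref{eq:dagger-alg-obj-naturality} of $\natural$ together with the naturality of the multiplication, which lets one pull $A(T)$ out of a product as $A(\id_{\bar V}\otimes T)$ and pull $A((T^*)^{*\vee})$ out as $A((T^*)^{*\vee}\otimes\id_U)$, and then contravariant functoriality to absorb $R_V$, resp. $R_U$. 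The two expressions agree because of the rotation identity $(\id_{\bar V}\otimes T)R_V=((T^*)^{*\vee}\otimes\id_U)R_U$, which is a routine consequence of the conjugate equations and the defining relation of $(\cdot)^{*\vee}$: both sides correspond to $T$ under the natural isomorphism $\cC(1_\cC,\bar V\otimes U)\cong\cC(V,U)$. Specializing to $V=U$ shows in particular that $S\mapsto A(S)$ is a unital $*$-anti-homomorphism from $\cC(U)$ into the C$^*$-algebra of adjointable operators on the Hilbert module $A(U)$, hence a positive contraction.

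Granting this, the first inequality is immediate. Since $A(T)^*A(T)=A(T^*)A(T)=A(TT^*)$ by contravariant functoriality, and $A$ restricted to $\cC(U)$ is a contractive $*$-anti-homomorphism, I get
\[
\norm{A(T)}^2=\norm{A(T)^*A(T)}=\norm{A(TT^*)}\le\norm{TT^*}=\norm{T}^2,
\]
whence $\norm{A(T)(a)}\le\norm{A(T)}\,\norm{a}\le\norm{T}\,\norm{a}$.

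For the multiplicativity bound I would first establish the identity
\[
\linnprod{A_0}{a\cdot b}{a\cdot b}=A(R_U)\big(a\cdot\linnprod{A_0}{b}{b}\cdot a^\natural\big)\qquad(a\in A(U),\ b\in A(V)),
\]
obtained from associativity, the antimultiplicativity $(a\cdot b)^\natural=b^\natural\cdot a^\natural$, the factorization $R_{V\otimes U}=(\id_{\bar U}\otimes R_V\otimes\id_U)R_U$, and the naturality of the multiplication, the latter contracting the middle $b\cdot b^\natural$ into $\linnprod{A_0}{b}{b}\in A_0$. Now fix $a$ and consider the linear map $\Phi_a\colon A_0\to A_0$, $\Phi_a(\beta)=A(R_U)(a\cdot\beta\cdot a^\natural)$. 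For $\beta\ge0$, writing $\gamma=\beta^{1/2}$ and using $\gamma^\natural=\gamma$ and $(a\cdot\gamma)^\natural=\gamma\cdot a^\natural$ gives $\Phi_a(\beta)=\linnprod{A_0}{a\cdot\gamma}{a\cdot\gamma}\ge0$, so $\Phi_a$ is positive with $\Phi_a(1_{A_0})=\linnprod{A_0}{a}{a}$. As positive maps are monotone, taking $\beta=\linnprod{A_0}{b}{b}$, which satisfies $\beta\le\norm{b}^2 1_{A_0}$, yields
\[
\linnprod{A_0}{a\cdot b}{a\cdot b}=\Phi_a(\beta)\le\norm{b}^2\,\Phi_a(1_{A_0})=\norm{b}^2\linnprod{A_0}{a}{a},
\]
and taking norms gives $\norm{a\cdot b}^2\le\norm{b}^2\norm{a}^2$.

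The main obstacle is the adjointability lemma, and within it the bookkeeping that rewrites both one-sided inner products as $A(\,\cdot\,)$ applied to a single morphism $1_\cC\to\bar V\otimes U$, together with the verification of the rotation identity from the conjugate equations. Once the functor action is known to be by adjointable operators, so that the fibers are genuine Hilbert modules on which $\cC$ acts by a positive contractive $*$-anti-homomorphism, both inequalities reduce to the standard positivity and monotonicity facts for C$^*$-algebras used above.
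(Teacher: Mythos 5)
Your overall strategy is sound, and in substance both halves of your argument end up coinciding with the paper's computations; but as written there is a circularity in the first half. When you assert that $S \mapsto A(S)$ is a unital $*$-anti-homomorphism from $\cC(U)$ ``into the C$^*$-algebra of adjointable operators on the Hilbert module $A(U)$, hence a positive contraction,'' you are invoking the standard fact that a unital $*$-homomorphism between C$^*$-algebras is contractive. At that point, however, $A(S)$ is only defined on the algebraic fiber, a dense pre-Hilbert $A_0$-submodule of the completion, and its boundedness is precisely what the proposition asserts; without boundedness it cannot be extended to the completion, hence cannot be placed in the C$^*$-algebra $\cB_{A_0}(A(U))$ at all. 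Moreover the symmetric relation $\linnprod{A_0}{A(T)(a)}{c} = \linnprod{A_0}{a}{A(T^*)(c)}$ on a dense invariant domain does \emph{not} by itself imply boundedness (think of the symmetric operator $e_n \mapsto n e_n$ on finitely supported sequences), so the step ``hence a positive contraction'' begs the question.

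The repair is short, and it is exactly the paper's proof. Your adjointability computation is correctly derived (the two rewritings of the one-sided inner products and the rotation identity $(\id_{\bar V}\otimes T)R_V = ((T^*)^{*\vee}\otimes\id_U)R_U$ are fine, and amount to a polarized form of the identity the paper uses), and on the uncompleted fibers it gives
\[
\linnprod{A_0}{A(T)(a)}{A(T)(a)} = \linnprod{A_0}{A(T T^*)(a)}{a}.
\]
Since $\norm{T}^2 1_U - T T^* \geq 0$ in the C$^*$-algebra $\cC(U)$, it equals $S S^*$ for some $S \in \cC(U)$, whence
\[
\norm{T}^2 \linnprod{A_0}{a}{a} - \linnprod{A_0}{A(T)(a)}{A(T)(a)} = \linnprod{A_0}{A(S)(a)}{A(S)(a)} \geq 0
\]
by the pre-C$^*$ positivity hypothesis; this uses no completeness and no prior boundedness, and is the standard automatic-boundedness argument for $*$-representations of a C$^*$-algebra on a pre-Hilbert module. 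With this substituted for ``hence a positive contraction,'' your first inequality is proved, $A(T)$ extends to the completions, and the rest of your framework becomes legitimate. Your second half --- the identity $\linnprod{A_0}{a\cdot b}{a\cdot b} = A(R_U)\bigl(a\cdot\linnprod{A_0}{b}{b}\cdot a^\natural\bigr)$ together with positivity and monotonicity of $\Phi_a$ --- is correct and is the paper's own argument in disguise: applying monotonicity of $\Phi_a$ to $\norm{b}^2 1_{A_0} - \linnprod{A_0}{b}{b}$, after extracting a square root, is exactly the same square-root trick the paper indicates for the second claim.
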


\begin{proof}
We have
\[
  \linnprod{A_0}{A(T)(a)}{A(T)(a)} = \linnprod{A_0}{A(T T^*)(a)}{a}
\]
by~\eqref{eq:dagger-alg-obj-naturality} and the way we define inner product,~\eqref{eq:inn-prod-for-star-alg-obj}.
From $T T^* \le \norm{T}^2$, there is $S$ such that $\norm{T}^2 - T T^* = S S^*$.
Then we get
\[
  \norm{T}^2 \linnprod{A_0}{a}{a} - \linnprod{A_0}{A(T)(a)}{A(T)(a)} = \linnprod{A_0}{A(S)(a)}{A(S)(a)} \ge 0,
\]
hence the first claim.

As for the second claim, we can start from $\linnprod{A_0}{a}{a} \le \norm{a}^2$ and use a similar argument.
\end{proof}

\begin{theorem}
Let $A$ be a pre-C$^*$-algebra object in $\Vect(\cC)$.
Then the fiberwise completion of $A$ with respect to the Hilbert module structure over $A_0$ is a C$^*$-algebra object.
\end{theorem}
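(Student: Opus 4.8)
The plan is to prove the statement by producing a faithful representation of the completed algebra into $\cB(H)$ for a suitable $H \in \Hilb(\cC)$, and then recognizing $\cM_A$ as a norm-closed $*$-subcategory of the C$^*$-category $\cM_{\cB(H)}$. By Definition \ref{def:c-star-alg-obj} this is exactly what it means for the completion to be a C$^*$-algebra object, so the whole task reduces to exhibiting such an isometric representation.

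First I would carry out the fiberwise completion. By Proposition \ref{prop:continuity-pre-C-star-alg-obj} the multiplication maps and the maps $A(T)$ are bounded for the left Hilbert $A_0$-module norms, so they extend to the completions $A(U)$, which are now genuine left Hilbert $A_0$-modules. The inner-product computation in the proof of that proposition also shows $\linnprod{A_0}{A(T)(a)}{b} = \linnprod{A_0}{a}{A(T^*)(b)}$, so each $A(T)$ is adjointable with $A(T)^* = A(T^*)$. Next I would fix a faithful nondegenerate representation $A_0 \subset \cB(K)$ and set $H(W) = A(W) \otimes_{A_0} K$, with $H(T) = A(T) \otimes \id_K$; adjointability of $A(T)$ makes $H$ a contravariant $*$-functor $\cC \to \Hilb$, and the bound $\norm{H(T)} \le \norm{T}$ shows $H \in \Hilb(\cC)$. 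I would then define $\theta \colon A \to \cB(H)$ by letting $a \in A(U)$ act through the algebra product: on the summand $A(U_i) \otimes_{A_0} K \otimes \cC(W, U_i \otimes U)$ of $(H \otimes U)(W)$ put $\theta_U(a)(b \otimes k \otimes f) = A(f)(a \cdot b) \otimes k$. Here the ordering $a \cdot b \in A(U_i \otimes U)$ matches the target $\cC(W, U_i \otimes U)$, so no braiding is needed and $\theta_U(a)$ is a well-defined morphism in $\Hilb(\cC)$.

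The verification that $\theta$ is a faithful $*$-homomorphism is the technical heart. Multiplicativity follows from associativity of the product together with naturality; $*$-preservation amounts to checking $\theta_{\bar U}(a^\natural) = \theta_U(a)^*$ under the involution of $\cB(H)$, which is where the antimultiplicativity of $\natural$, the conjugate equations, and the definition \eqref{eq:inn-prod-for-star-alg-obj} of the inner product must be matched against the Hilbert-module adjoint. Finally, evaluating $\theta_U(a)$ on vectors supported on the $A_0 = A(1_\cC)$-leg and invoking faithfulness of $A_0 \subset \cB(K)$ together with nondegeneracy of the inner product gives $\norm{\theta_U(a)} = \norm{a}$, so $\theta$ is isometric on each completed fiber.

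Consequently $\theta$ identifies $A$ with a norm-closed $*$-subobject of $\cB(H)$, i.e.\ $\cM_A$ embeds as a norm-closed $*$-subcategory of $\cM_{\cB(H)}$; since the latter is a C$^*$-category (indeed $\cB(H)$ is a W$^*$-algebra object) and the $\cC$-module structure is inherited from $\cM_{\cB(H)}$, the category $\cM_A$ is itself a C$^*$-$\cC$-module category. By Definition \ref{def:c-star-alg-obj} the fiberwise completion of $A$ is therefore a C$^*$-algebra object. I expect the main obstacle to be the $*$-preservation identity $\theta_{\bar U}(a^\natural) = \theta_U(a)^*$ and the isometry lower bound $\norm{\theta_U(a)} \ge \norm{a}$; by contrast, the completion, the functoriality of $H$, and the multiplicativity of $\theta$ should be routine consequences of Proposition \ref{prop:continuity-pre-C-star-alg-obj} and associativity.
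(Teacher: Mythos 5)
Your route is a pre-C$^*$ version of the Gelfand--Naimark theorem (Theorem \ref{thm:mod-cat-Gelfand-Naimark}), and it differs from the paper's proof, which never constructs a representation; but as written it is circular at the decisive step, namely the one you dismiss as ``well-defined''. With only Proposition \ref{prop:continuity-pre-C-star-alg-obj} in hand, there is no uniform bound making $\theta_U(a)$ an element of $\cB(H)(U) = \Hilb(\cC)(H \otimes U, H)$: the estimate $\norm{a \cdot b} \le \norm{a}\norm{b}$ controls each simple tensor $b \otimes k \otimes f$, but a general vector of the fiber $(H \otimes U)(W)$ is a sum over $i \in \Irr(\cC)$ and over a basis of $\cC(W, U_i \otimes U)$, the images $A(f_\alpha)(a \cdot b_\alpha) \otimes k_\alpha$ are not mutually orthogonal in $H(W)$, and the naive triangle-inequality bound blows up as $W$ varies (the relevant quantities $\norm{\sum_\alpha f_\alpha f_\alpha^*}$ grow like $d_i d_U$). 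The standard way to get a bound uniform in $W$ is the C$^*$-identity, i.e.\ one must already know that each $A(V \otimes \bar V)$ carries a C$^*$-norm equivalent to the Hilbert module norm --- but that is exactly the content of the theorem. The paper proves precisely this, and nothing more: the left regular action $j_{U, \bar U}$ of $A(U \otimes \bar U)$ on the single Hilbert module $A(U \otimes \bar U)$ is by bounded adjointable operators (adjointability from associativity of $\cdot$ and antimultiplicativity of $\natural$), the resulting operator norm $\norm{\cdot}'$ is a pre-C$^*$-norm, and the two-sided estimate $\norm{a}\,\norm{\eta_U}^{-1} \le \norm{a}' \le \norm{R_U}\,\norm{a}$, with $\eta_U = A(\bar R_U^*)(1_{A_0})$ the unit, shows the two norms are equivalent; the fiberwise completion is then a C$^*$-module category and Definition \ref{def:c-star-alg-obj} applies directly. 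Once you have that lemma your detour through $H$ and $\theta$ adds nothing to this statement; without it, the boundedness of $\theta_U(a)$ is unproved.

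Separately, your asserted equality $\norm{\theta_U(a)} = \norm{a}$ is false: the Hilbert module norm on the fibers is an $L^2$-type norm, only \emph{equivalent} to the operator norm, with $U$-dependent constants as in the paper's estimate above. Concretely, for the trivial algebra $A = \cC(\,\cdot\,, 1_\cC)$ one has $A(U \otimes \bar U) \cong \cC(U, U)$ with the module norm a weighted Hilbert--Schmidt norm; the unit $\eta_U$ acts as the identity (operator norm $1$) but has module norm $\norm{\bar R_U} = d_U^{1/2}$. Your conclusion only needs closed range, so norm equivalence on each fiber would suffice --- the lower bound via evaluation on the $A_0$-leg (a relative of the paper's evaluation at $\eta_U$) does yield $\norm{\theta_U(a)} \ge c_U \norm{a}$ --- but the matching upper bound is again the norm-comparison lemma. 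So to make your proof honest, insert the paper's comparison of $\norm{\cdot}'$ with the module norm first; at that point the representation-theoretic superstructure becomes an optional corollary rather than the proof.
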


\begin{proof}
The natural action of $A(U \otimes \bar U)$ on $A(U)$ is by bounded and adjointable $A_0$-homomorphisms by Proposition~\ref{prop:continuity-pre-C-star-alg-obj}.
We can thus assume that each $A(U)$ is already a Hilbert $A_0$-module.

From the associativity of product $\cdot$ and the antimultiplicativity of involution $\natural$, we see that the natural action of $A(U \otimes \bar U \otimes U \otimes \bar U)$ on $A(U \otimes \bar U)$ is by adjointable homomorphisms.
Thus, the norm
\[
  \norm{a}' = \norm{j_{U,\bar U}(a)}_{\cB_{A_0}(A(U \otimes \bar U))}
\]
on $A(U \otimes \bar U)$ is a pre-C$^*$-norm.
We are going to show that this norm is equivalent to the Hilbert module norm on $A(U \otimes \bar U)$.
Then the fiberwise completion of $A$ gives a C$^*$-module category, hence the completion will be a C$^*$-algebra object in the sense of Definition~\ref{def:c-star-alg-obj}.

On one hand, the action of $j(a) = j_{U,\bar U}(a)$ on $\eta_U = A(\bar R_U^*)(1_{A_0})$ (the unit of $A(U \otimes \bar U)$) is $a$, hence we have
\[
  \norm{a}' \ge \norm{a}_{A(U \otimes \bar U)} \norm{\eta_U}_{A(U \otimes \bar U)}^{-1}.
\]
On the other, Proposition~\ref{prop:continuity-pre-C-star-alg-obj} implies
\[
  \norm{R_U} \norm{a}_{A(U \otimes \bar U)} \ge \norm{a}',
\]
and we obtain the claim.
\end{proof}

\raggedright
\begin{bibdiv}
\begin{biblist}

\bib{AV1}{misc}{
      author={Antoun, Jamie},
      author={Voigt, Christian},
       title={On bicolimits of {C$^*$}-categories},
         how={preprint},
        date={2020},
      eprint={\href{http://arxiv.org/abs/2006.06232}{\texttt{arXiv:2006.06232
  [math.OA]}}},
}

\bib{MR253059}{article}{
      author={Arveson, William~B.},
       title={Subalgebras of {$C^{\ast} $}-algebras},
        date={1969},
        ISSN={0001-5962},
     journal={Acta Math.},
      volume={123},
       pages={141\ndash 224},
         url={https://doi.org/10.1007/BF02392388},
         doi={10.1007/BF02392388},
      review={\MR{253059}},
}

\bib{biane}{article}{
      author={Biane, Philippe},
       title={Quantum random walk on the dual of {${\rm SU}(n)$}},
        date={1991},
        ISSN={0178-8051},
     journal={Probab. Theory Related Fields},
      volume={89},
      number={1},
       pages={117\ndash 129},
         url={http://dx.doi.org/10.1007/BF01225828},
         doi={10.1007/BF01225828},
      review={\MR{1109477}},
}

\bib{MR3735864}{article}{
      author={Breuillard, Emmanuel},
      author={Kalantar, Mehrdad},
      author={Kennedy, Matthew},
      author={Ozawa, Narutaka},
       title={{$C^*$}-simplicity and the unique trace property for discrete
  groups},
        date={2017},
        ISSN={0073-8301},
     journal={Publ. Math. Inst. Hautes \'{E}tudes Sci.},
      volume={126},
       pages={35\ndash 71},
         url={https://doi-org.ezproxy.uio.no/10.1007/s10240-017-0091-2},
         doi={10.1007/s10240-017-0091-2},
      review={\MR{3735864}},
}

\bib{MR0376726}{article}{
      author={Choi, Man~Duen},
       title={Completely positive linear maps on complex matrices},
        date={1975},
     journal={Linear Algebra and Appl.},
      volume={10},
       pages={285\ndash 290},
      review={\MR{0376726}},
}

\bib{ce1}{article}{
      author={Choi, Man-Duen},
      author={Effros, Edward~G.},
       title={Injectivity and operator spaces},
        date={1977},
     journal={J. Functional Analysis},
      volume={24},
      number={2},
       pages={156\ndash 209},
      review={\MR{0430809 (55 \#3814)}},
}

\bib{ydk1}{article}{
      author={De~Commer, Kenny},
      author={Yamashita, Makoto},
       title={Tannaka-{K}re\u\i n duality for compact quantum homogeneous
  spaces. {I}. {G}eneral theory},
        date={2013},
        ISSN={1201-561X},
     journal={Theory Appl. Categ.},
      volume={28},
       pages={No. 31, 1099\ndash 1138},
      eprint={\href{http://arxiv.org/abs/1211.6552}{\texttt{arXiv:1211.6552
  [math.OA]}}},
      review={\MR{3121622}},
}

\bib{furstenberg1}{article}{,
  title={A Poisson formula for semi-simple Lie groups},
  author={Furstenberg, Harry},
  journal={Annals of Mathematics},
  pages={335--386},
  year={1963},
  publisher={JSTOR}
}

\bib{jonesghosh1}{article}{
      author={Ghosh, Shamindra~Kumar},
      author={Jones, Corey},
       title={Annular representation theory for rigid {$C^*$}-tensor
  categories},
        date={2016},
        ISSN={0022-1236},
     journal={J. Funct. Anal.},
      volume={270},
      number={4},
       pages={1537\ndash 1584},
      eprint={\href{http://arxiv.org/abs/1502.06543}{\texttt{arXiv:1502.06543
  [math.OA]}}},
         url={http://dx.doi.org/10.1016/j.jfa.2015.08.017},
         doi={10.1016/j.jfa.2015.08.017},
      review={\MR{3447719}},
}

\bib{MR365537}{article}{
      author={Glasner, Shmuel},
       title={Compressibility properties in topological dynamics},
        date={1975},
        ISSN={0002-9327},
     journal={Amer. J. Math.},
      volume={97},
       pages={148\ndash 171},
         url={https://doi.org/10.2307/2373665},
         doi={10.2307/2373665},
      review={\MR{365537}},
}

\bib{HHN1}{article}{
   author={Habbestad, Erik},
   author={Hataishi, Lucas},
   author={Neshveyev, Sergey},
   title={Noncommutative Poisson boundaries and Furstenberg-Hamana
   boundaries of Drinfeld doubles},
   language={English, with English and French summaries},
   journal={J. Math. Pures Appl. (9)},
   volume={159},
   date={2022},
   pages={313--347},
   issn={0021-7824},
   review={\MR{4377998}},
   doi={10.1016/j.matpur.2021.12.006},
}

\bib{h4}{article}{
  title={Injective envelopes of Banach modules},
  author={Hamana, Masamichi},
  journal={Tohoku Mathematical Journal, Second Series},
  volume={30},
  number={3},
  pages={439--453},
  year={1978},
  publisher={Mathematical Institute, Tohoku University}
}

\bib{h2}{article}{
title={Injective envelopes of C*-algebras},
  author={Hamana, Masamichi},
  journal={Journal of the Mathematical Society of Japan},
  volume={31},
  number={1},
  pages={181--197},
  year={1979},
  publisher={The Mathematical Society of Japan}
}

\bib{h1}{article}{
      author={Hamana, Masamichi},
       title={Injective envelopes of operator systems},
        date={1979},
        ISSN={0034-5318},
     journal={Publ. Res. Inst. Math. Sci.},
      volume={15},
      number={3},
       pages={773\ndash 785},
         url={https://doi.org/10.2977/prims/1195187876},
         doi={10.2977/prims/1195187876},
      review={\MR{566081}},
}

\bib{h3}{article}{
title={Injective envelopes of $C^*$-dynamical systems},
  author={Hamana, Masamichi},
  journal={Tohoku Mathematical Journal, Second Series},
  volume={37},
  number={4},
  pages={463--487},
  year={1985},
  publisher={Mathematical Institute, Tohoku University}
}

\bib{MR2985658}{article}{
      author={Hamana, Masamichi},
       title={Injective envelopes of dynamical systems},
        date={2011},
        ISSN={1880-6015},
     journal={Toyama Math. J.},
      volume={34},
       pages={23\ndash 86},
      review={\MR{2985658}},
}

\bib{YDpaper}{misc}{
author={Hataishi, Lucas},
author={Yamashita, Makoto},
title={Categorical dualtiy for Yetter--Drinfeld C$^*$-algebras. Beyond the braided-commutative case},
year={2025}
}

\bib{izumi1}{article}{
  title={Non-commutative Poisson boundaries and compact quantum group actions},
  author={Izumi, Masaki},
  journal={Advances in Mathematics},
  volume={169},
  number={1},
  pages={1--57},
  year={2002},
  publisher={Elsevier}
}

\bib{izumi2}{incollection}{
      author={Izumi, Masaki},
       title={Non-commutative {P}oisson boundaries},
        date={2004},
   booktitle={Discrete geometric analysis},
      series={Contemp. Math.},
      volume={347},
   publisher={Amer. Math. Soc.},
     address={Providence, RI},
       pages={69\ndash 81},
         url={http://dx.doi.org/10.1090/conm/347/06267},
         doi={10.1090/conm/347/06267},
      review={\MR{2077031 (2005e:46126)}},
}

\bib{int}{article}{
      author={Izumi, Masaki},
      author={Neshveyev, Sergey},
      author={Tuset, Lars},
       title={Poisson boundary of the dual of {${\rm SU}_q(n)$}},
        date={2006},
        ISSN={0010-3616},
     journal={Comm. Math. Phys.},
      volume={262},
      number={2},
       pages={505\ndash 531},
  eprint={\href{http://arxiv.org/abs/math/0402074}{\texttt{arXiv:math/0402074
  [math.OA]}}},
         url={http://dx.doi.org/10.1007/s00220-005-1439-x},
         doi={10.1007/s00220-005-1439-x},
      review={\MR{MR2200270 (2007f:58012)}},
}

\bib{jp1}{article}{
  title={Operator algebras in rigid C*-tensor categories},
  author={Jones, Corey},
  author={Penneys, David},
  journal={Communications in Mathematical Physics},
  volume={355},
  number={3},
  pages={1121--1188},
  year={2017},
  publisher={Springer}
}

\bib{kksv1}{article}{
   author={Kalantar, Mehrdad},
   author={Kasprzak, Pawe\l },
   author={Skalski, Adam},
   author={Vergnioux, Roland},
   title={Noncommutative Furstenberg boundary},
   journal={Anal. PDE},
   volume={15},
   date={2022},
   number={3},
   pages={795--842},
   issn={2157-5045},
   review={\MR{4442841}},
   doi={10.2140/apde.2022.15.795},
}

\bib{kk}{article}{
  title={Boundaries of reduced C$^*$-algebras of discrete groups},
  author={Kalantar, Mehrdad},
  author={Kennedy, Matthew},
  journal={Journal f{\"u}r die reine und angewandte Mathematik (Crelles Journal)},
  volume={2017},
  number={727},
  pages={247--267},
  year={2017},
  publisher={De Gruyter}
}

\bib{kennedy}{article}{
      author={Kennedy, Matthew},
       title={An intrinsic characterization of {$C^*$}-simplicity},
        date={2020},
        ISSN={0012-9593,1873-2151},
     journal={Ann. Sci. \'{E}c. Norm. Sup\'{e}r. (4)},
      volume={53},
      number={5},
       pages={1105\ndash 1119},
         url={https://doi.org/10.24033/asens.2441},
         doi={10.24033/asens.2441},
      review={\MR{4174855}},
}

\bib{moore}{inproceedings}{
      author={Moore, Calvin~C.},
       title={Representations of solvable and nilpotent groups and harmonic
  analysis on nil and solvmanifolds},
        date={1973},
   booktitle={Harmonic analysis on homogeneous spaces ({P}roc. {S}ympos. {P}ure
  {M}ath., {V}ol. {XXVI}, {W}illiams {C}oll., {W}illiamstown, {M}ass., 1972)},
       pages={3\ndash 44},
      review={\MR{0385001}},
}

\bib{n1}{article}{
      author={Neshveyev, Sergey},
       title={Duality theory for nonergodic actions},
        date={2014},
        ISSN={1867-5778},
     journal={M{\"u}nster J. Math.},
      volume={7},
      number={2},
       pages={413\ndash 437},
      eprint={\href{http://arxiv.org/abs/1303.6207}{\texttt{arXiv:1303.6207
  [math.OA]}}},
      review={\MR{3426224}},
}

\bib{neshveyevtuset1}{article}{
      author={Neshveyev, Sergey},
      author={Tuset, Lars},
       title={The {M}artin boundary of a discrete quantum group},
        date={2004},
        ISSN={0075-4102},
     journal={J. Reine Angew. Math.},
      volume={568},
       pages={23\ndash 70},
  eprint={\href{http://arxiv.org/abs/math/0209270}{\texttt{arXiv:math/0209270
  [math.OA]}}},
         url={http://dx.doi.org/10.1515/crll.2004.018},
         doi={10.1515/crll.2004.018},
      review={\MR{MR2034922 (2006f:46058)}},
}

\bib{MR3204665}{book}{
      author={Neshveyev, Sergey},
      author={Tuset, Lars},
       title={Compact quantum groups and their representation categories},
      series={Cours Sp{\'e}cialis{\'e}s [Specialized Courses]},
   publisher={Soci{\'e}t{\'e} Math{\'e}matique de France, Paris},
        date={2013},
      volume={20},
        ISBN={978-2-85629-777-3},
      review={\MR{3204665}},
}

\bib{NY3}{article}{
      author={Neshveyev, Sergey},
      author={Yamashita, Makoto},
       title={Categorical duality for {Y}etter-{D}rinfeld algebras},
        date={2014},
        ISSN={1431-0635},
     journal={Doc. Math.},
      volume={19},
       pages={1105\ndash 1139},
      eprint={\href{http://arxiv.org/abs/1310.4407}{\texttt{arXiv:1310.4407
  [math.OA]}}},
      review={\MR{3291643}},
}

\bib{NY2}{article}{
  author={Neshveyev, Sergey},
   author={Yamashita, Makoto},
  title={Drinfeld center and representation theory for monoidal categories},
  journal={Communications in Mathematical Physics},
  volume={345},
  number={1},
  pages={385--434},
  year={2016},
  publisher={Springer}
}

\bib{NY1}{article}{
   author={Neshveyev, Sergey},
   author={Yamashita, Makoto},
   title={Poisson boundaries of monoidal categories},
   journal={Ann. Sci. \'{E}c. Norm. Sup\'{e}r. (4)},
   volume={50},
   date={2017},
   number={4},
   pages={927--972},
   issn={0012-9593},
   review={\MR{3679617}},
   doi={10.24033/asens.2335},
}

\bib{MR3933035}{article}{
      author={Neshveyev, Sergey},
      author={Yamashita, Makoto},
       title={Categorically {M}orita equivalent compact quantum groups},
        date={2018},
        ISSN={1431-0635},
     journal={Doc. Math.},
      volume={23},
       pages={2165\ndash 2216},
      review={\MR{3933035}},
}

\bib{ostrik03}{article}{
  title={Module categories, weak Hopf algebras and modular invariants},
  author={Ostrik, Victor},
  journal={Transformation groups},
  volume={8},
  number={2},
  pages={177--206},
  year={2003},
  publisher={Springer}
}

\bib{PV1}{article}{
 title={Representation Theory for Subfactors, $\lambda$-Lattices and C*-Tensor Categories},
  author={Popa, Sorin},
  author={Vaes, Stefaan},
  journal={Communications in Mathematical Physics},
  volume={340},
  number={3},
  pages={1239--1280},
  year={2015},
  publisher={Springer}
}

\bib{vaesvergnioux}{article}{
      author={Vaes, Stefaan},
      author={Vergnioux, Roland},
       title={The boundary of universal discrete quantum groups, exactness, and
  factoriality},
        date={2007},
        ISSN={0012-7094},
     journal={Duke Math. J.},
      volume={140},
      number={1},
       pages={35\ndash 84},
  eprint={\href{http://arxiv.org/abs/math/0509706}{\texttt{arXiv:math/0509706
  [math.OA]}}},
         url={http://dx.doi.org/10.1215/S0012-7094-07-14012-2},
         doi={10.1215/S0012-7094-07-14012-2},
      review={\MR{2355067 (2010a:46166)}},
}

\end{biblist}
\end{bibdiv}

\end{document}